\documentclass{lmcs} 

\keywords{Solvability Complexity Index (SCI); computable analysis; Weihrauch reducibility; iterated limits; Borel hierarchy}

\usepackage{amsfonts} 
\usepackage{amssymb}
\usepackage{amsthm}
\usepackage{amsmath} 
\usepackage{mathrsfs}
\usepackage{mathtools}
\usepackage{dsfont}
\usepackage{esint}
\usepackage{ulem}
\usepackage{marginnote}
\usepackage{array}
\usepackage{algorithm}
\usepackage{algpseudocode}
\usepackage{soul}
\usepackage{comment}

\usepackage[utf8]{inputenc}  
\usepackage[T1]{fontenc}  

\usepackage{booktabs} 

\usepackage{MnSymbol}

\usepackage{longtable}

\usepackage[all]{xy}

\usepackage[english]{babel} 
\usepackage[left=3.5cm,right=3.5cm,top=3.5cm,bottom=3cm,headsep=0.7cm]{geometry}
\usepackage{xargs}

\usepackage{csquotes}
\usepackage{stmaryrd}
\usepackage[colorinlistoftodos,prependcaption,textsize=tiny]{todonotes}
\newcommandx{\unsure}[2][1=]{\todo[linecolor=red,backgroundcolor=red!25,bordercolor=red,#1]{#2}}
\newcommandx{\change}[2][1=]{\todo[linecolor=blue,backgroundcolor=blue!25,bordercolor=blue,#1]{#2}}
\newcommandx{\info}[2][1=]{\todo[linecolor=OliveGreen,backgroundcolor=OliveGreen!25,bordercolor=OliveGreen,#1]{#2}}
\newcommandx{\improvement}[2][1=]{\todo[linecolor=Plum,backgroundcolor=Plum!25,bordercolor=Plum,#1]{#2}}
\newcommandx{\thiswillnotshow}[2][1=]{\todo[disable,#1]{#2}}

\usepackage{pgf,tikz}
\usetikzlibrary{positioning, calc, arrows.meta}
\usetikzlibrary{arrows}
\usepackage{subcaption}
\usepackage{graphicx}
\usepackage[parfill]{parskip}

\usepackage{enumitem}

\usepackage{hyperref}
\usepackage[nameinlink]{cleveref}
\usepackage{mathtools}
\hypersetup{
    colorlinks,
    linkcolor={red!80!black},
    citecolor={blue!80!black}
}

\crefname{assumption}{assumption}{assumptions}  
\Crefname{assumption}{Assumption}{Assumptions}  
\crefname{counterexample}{counterexample}{counterexamples}
\Crefname{counterexample}{Counterexample}{Counterexamples}

\theoremstyle{plain}
\begingroup
\theoremstyle{plain}
\newtheorem{theorem}{Theorem}[section]
\newtheorem{corollary}[theorem]{Corollary}
\newtheorem{proposition}[theorem]{Proposition}
\newtheorem{lemma}[theorem]{Lemma}
\theoremstyle{definition}
\newtheorem{definition}[theorem]{Definition}

\theoremstyle{remark}
\newtheorem{remark}[theorem]{Remark}

\endgroup

\theoremstyle{definition}
\theoremstyle{remark}

\numberwithin{equation}{section}

\newcommand{\R}{\mathbb{R}}
\newcommand{\N}{\mathbb{N}}
\newcommand{\NN}{\mathbb{N}}

\mathchardef\emptyset="001F

\newcommand{\C}{\mathbb{C}}

\newcommand{\Reg}{\mathrm{Reg}}

\newcommand{\Baire}{\N^\N}
\newcommand{\dom}{\mathrm{dom}}
\newcommand{\id}{\mathrm{id}}
\newcommand{\pair}{\langle\,\cdot,\cdot\,\rangle}

\newcommand{\Bor}{\mathsf{Bor}}
\newcommand{\Cont}{\mathsf{Cont}}
\newcommand{\Comp}{\mathsf{Comp}}
\newcommand{\BaireMeas}{\mathsf{Baire}}
\newcommand{\SCIAbss}{\mathrm{SCI}^{\mathrm{BSS}}_A}

\newcommand{\pairWhite}[2]{\langle #1,#2\rangle}

\newcommand{\Lim}{\operatorname{lim}}
\newcommand{\Limi}[1]{\Lim^{(#1)}}

\newcommand{\SCIG}{\mathrm{SCI}_G}
\newcommand{\SCIA}{\mathrm{SCI}_A}

\begin{document}

\title[Foundational Analysis Of The Solvability Complexity Index]{Foundational Analysis Of The Solvability Complexity Index: The Weihrauch-SCI Intermediate Hierarchy}
\titlecomment{An extended version is available as arXiv:2603.18955. It contains additional material on evaluation-induced topologies, Kihara-style piecewise-continuity reducibilities, and feasible real RAM models and their connection to the SCI}
\thanks{}	

\author[C.~Sorg]{Christopher Sorg\lmcsorcid{0009-0003-9684-6966}}[a]
\address{University of the Bundeswehr Munich, Werner-Heisenberg-Weg 39, 85577 Neubiberg, Germany}	
\email{chr.sorg@unibw.de}  

\begin{abstract}
\noindent The Solvability Complexity Index (SCI) provides an extensional limit-height formalism for recovering a target map \(\Xi\) from finite samples of an evaluation interface \(\Lambda\subseteq\mathbb C^\Omega\) by finite-height towers of pointwise limits. We first give a foundational analysis of what this extensional framework does and does not determine.
We show that the SCI separation axiom is equivalent to a factorization of $\Xi$ through the full evaluation table, and we isolate the minimal logical role of $\Lambda$ as an information interface.

To connect the SCI to Type-2 computability and Weihrauch reducibility, we give an effective enrichment for countable $\Lambda$ by viewing the evaluation table image $I_{\Lambda}\subseteq\mathbb{C}^{\mathbb{N}}$ as a represented space and factoring $\Xi$ as $\widehat{\Xi}$.
We then define the Weihrauch-SCI rank of a problem as the least number of iterated limit-oracles needed to compute it in the Weihrauch sense, i.e.\ the least $k$ such that
$\widehat{\Xi}\le_{W}\lim^{(k)}$, and prove well-posedness and representation invariance of this rank.

A central negative result is that the unrestricted raw type-G SCI model (arbitrary post-processing of finite oracle transcripts) is generally not a computability model in the Type-2/Weihrauch sense: finite-query factorizations collapse raw type-G height, and analytic non-Borel decision problems yield examples with raw \(\SCIG=0\) but infinite Weihrauch-SCI rank. We therefore distinguish the raw extensional SCI from implemented SCI variants, where the indexed approximation table is required to be realized uniformly by a chosen class of operations.
To recover a robust bridge, we introduce an intermediate SCI hierarchy by restricting the admissible deepest-level post-processing to regularity classes (continuous/Borel/Baire) and, optionally, to fixed-query versus adaptive-query policies. We prove that these restrictions form hierarchies, and we establish comparison theorems showing what each restriction logically enforces.

Finally, we give self-contained canonical source problems over Cantor-matrix inputs which realize arbitrary finite standard raw type-G SCI heights. These examples are
not presented as computability models by themselves; rather, they are calibration objects for the extensional SCI and for the interaction between finite-query information, Borel hierarchy level, restricted SCI towers, and Weihrauch-style uniform iterated-limit complexity.
\end{abstract}

\maketitle

\section{Introduction}

The \textit{Solvability Complexity Index} (SCI) was introduced as a classification tool for
mathematical tasks that are not naturally captured by a single-shot notion of computation,
but instead admit (or require) \textit{towers of algorithms}: procedures whose outputs converge
to the desired value only after one or several limiting operations \cite{hansen2011solvability,ben2015computing,ColbrookHansen22}.
This perspective has proved particularly fruitful in computational spectral theory, where
the existence (or impossibility) of algorithms can depend delicately on the permitted input
interface and on the allowed post-processing of oracle information \cite{hansen2011solvability,ColbrookHansen22}.

At the same time, the SCI is intentionally \textit{extensional}: at its core, one fixes an input
class $\Omega$, a target map $\Xi:\Omega\to \mathcal{M}$ for a metric space $\mathcal{M}$, and an evaluation family $\Lambda$ (an
information interface), and defines what it means for a tower to compute $\Xi$ from finitely
many evaluation values (possibly iterated through limits). Here, extensional is not the set-theoretic axiom of extensionality,
the corresponding SCI-definitions just don't fix (on purpose) any syntax-level notation of computation yet.
This abstraction is a strength: it allows one to compare disparate numerical and information
models under a single roof but it also raises a foundational question that motivates the
present work:
\begin{align*}
\textbf{Q:} \text{ When does the SCI define a computability model?}
\end{align*}
A \enquote{computability model} is not merely a set of input-output tasks; it is a stable notion of
\textit{morphism} closed under composition and basic data operations, so that procedures can be
combined, relativized, and compared without ambiguity.
In Type-2 computable analysis this stability is enforced by \textit{representations} (names) and
\textit{realizers}: maps between represented spaces form a robust category in which computable
operations are automatically continuous at the level induced by the representations
\cite{weihrauch2000computable,brattka2021weihrauch}.
Weihrauch reducibility is meaningful precisely because it lives inside such a compositional
setting: it compares problems by uniform pre-/post-processing around a single oracle call,
and the resulting degree structures support a rich algebra of computational content
\cite{brattka2011weihrauch,brattka2021weihrauch}.

In contrast, the purely extensional SCI definition leaves open what counts as admissible
post-processing on finitely many oracle values.
Different interpretations of this interface can lead to different \enquote{heights} for the
same extensional task.
A classical example of such divergence is the gap between \textit{atomic} real-number interfaces
(which allow exact comparisons) and \textit{name-based} interfaces (Type-2), where computability
is necessarily continuous \cite{weihrauch2000computable,neumann2018topological,BSS98}.
This paper therefore takes the stance that statements of the form \enquote{$\Xi$ has SCI height $k$} 
should be read as \textit{conditional} on an explicit package of premises describing 
(i) the information interface, and (ii) the closure/regularity properties
demanded of the admissible post-processing.

A second distinction is equally important. A tower may be understood merely as an
extensional family of approximants, or as a uniformly implemented approximation
scheme.  In the first reading, each indexed approximant may be specified separately;
this is sufficient for an information-theoretic limit-height classification, but it is not
a computability model in the represented-space sense. In the second reading, the
whole indexed table of approximants must be produced uniformly by one admissible
procedure, for instance by one BSS machine, one Type-2 functional, or one realizer in
a chosen regularity class. Weihrauch comparability belongs to this second reading.

A tempting repair would be to require every intermediate semantic limit map in an
SCI tower to be a finite-information general algorithm. \Cref{prop:all-level-collapse} shows why this is
not the right solution: such an all-level repair is too strong and collapses the finite
positive hierarchy to height \(1\). The correct repair is instead uniform
implementation of the indexed approximation table.
There is an instructive parallel here to a familiar point in logic.
On the proof-theoretic side, theoremhood (\enquote{tautology}) is defined relative to a chosen
deductive calculus (axioms and inference rules); on the semantic side, validity is defined
relative to a chosen account of consequence (e.g.\ Tarski's model-theoretic criterion)
\cite{Tarski02,shapiro1998logical}.
Carnap famously emphasized that different formal frameworks may be adopted for different
purposes (\enquote{tolerance}), while Quine challenged sharp analytic/synthetic boundaries and
thus any overly rigid separation between \enquote{truth by meaning} and \enquote{truth by fact}
\cite{carnap2014logical,quine2000two}.
In the contemporary literature, these tensions also motivate more pluralist readings of
logical consequence \cite{beall2000logical}, as well as detailed debates about what Tarski's
definition does and does not capture \cite{etchemendy1990concept}.
The moral for the present context is not that the SCI is purely philosophical, but that \textit{the
informativeness of any classification depends on making its background commitments explicit}.
Our goal is to do this in a way that stays mathematically sharp while remaining neutral with
respect to foundational preferences.

\textbf{Overview of the paper.}$\,$
\Cref{sec:SCIdef} fixes the standard extensional SCI and separates it from uniformly
implemented tower readings. \Cref{sec:MinLogRoleEv} identifies the minimal logical role of the
evaluation family \(\Lambda\): the SCI consistency condition is equivalent to
factorization of the target through the full evaluation table. \Cref{sec:EffEnrich} enriches
countable evaluation interfaces by represented information spaces and defines the
Weihrauch-SCI rank. \Cref{sec:G-Case} proves that unrestricted raw type-\(G\) SCI is not
generally comparable with Weihrauch or Type-2 complexity: finite-query factorizations
collapse raw \(\SCIG\), while analytic non-Borel decision problems have infinite
Weihrauch--SCI rank. \Cref{sec:IntHierarchy} introduces the regularity-restricted intermediate SCI
hierarchy and proves the basic comparison theorems. \Cref{sec:ExampleSCI} gives canonical
Cantor-matrix source problems of exact finite raw type-\(G\) height and a tagged union
of infinite height. \Cref{sec:uniformity-minimality} collects the uniformity and minimality results: it explains
why all-level finite-information towers are too restrictive, why non-uniform
deepest-only towers are too weak for Weihrauch comparability, and why the pure
\(\mathcal R\)-\(\Lim\) normal form gives the correct represented-space bridge.

\section{The Standard Extensional SCI And Implemented Readings}{\label{sec:SCIdef}}

We are interested in measuring the \enquote{algorithmic complexity} of mathematical spectral approximation problems in the sense of the \textit{Solvability Complexity Index} (SCI), which measures the minimal number of nested limits required by any algorithm to solve a given (infinite-dimensional) problem \cite{hansen2011solvability,ben2015computing,ben2015new,ColbrookHansen22}. The SCI framework has revealed sharp boundaries for spectral computations of (unbounded) operators. Some spectral problems are provably not solvable by any single-limit procedure (hence admit no global error control), yet become solvable by towers of algorithms of finite height or may not be solvable for a finite height at all, in the latter the SCI being therefore infinity, see e.g.\ \cite[Theorem B.1]{colbrook2024limits}. We first state the standard extensional SCI. We then distinguish it from implemented variants, where the indexed approximation table is required to be produced uniformly by a specified computation model. This distinction is logically necessary for Weihrauch comparisons.

\textbf{Notation:} In this work, we will always use the notation $\NN := \{1,2,3,\dots\}$ and $\NN_0:=\NN \cup \{0\}$.

\begin{definition}[Metric space]\label{def:metric}
A \textit{metric space} is a pair $(\mathcal{M},d)$ where $\mathcal{M}$ is a set and the map
$d:\mathcal{M}\times\mathcal{M}\to [0,\infty)$ satisfies $\forall x,y,z \in \mathcal{M}$:
(i) $d(x,y)=0 \Leftrightarrow x=y$;
(ii) $d(x,y)=d(y,x)$;
(iii) $d(x,z)\le d(x,y)+d(y,z)$.
\end{definition}

\begin{definition}[Complex-valued evaluation set]\label{def:eval-set}
Let $\Omega$ be a set. Write $\C^\Omega:=\{f:\Omega\to\C\}$.
An \textit{evaluation set} on $\Omega$ is a subset $\Lambda\subseteq \C^\Omega$.
Equivalently, $\Lambda$ is a family of maps $f:\Omega\to\C$.
\end{definition}

\begin{remark}[Non-$\C$-valued \enquote{natural} evaluations]\label{rem:nonC}
Many evaluations are not $\C$-valued.
To fit \cref{def:eval-set} \textit{literally}, one fixes once and for all an injective encoding
$\iota:V\hookrightarrow \C$ of the value set $V$ (or into $\C^m$ and then into $\C$),
and replaces each $g:\Omega\to V$ by $\iota\circ g:\Omega\to\C$.
This does not change the \textit{information content} of the oracle family as long as $\iota$ is injective.
\end{remark}

The now following SCI definitions are based on the definitions in \cite[Appendix~A]{colbrook2024limits} but slightly precised for our purposes in this work.

\begin{definition}[Computational problem; consistency \enquote{axiom}]\label{def:comp-prob}
A \textit{computational problem} is a quadruple
\[
  \mathcal{P}=\bigl( \Xi,\Omega,(\mathcal{M},d),\Lambda \bigr),
\]
where
\begin{itemize}
\item $\Omega$ is a set (the \textit{primary set}, i.e.\ input class);
\item $(\mathcal{M},d)$ is a metric space in the sense of \cref{def:metric};
\item $\Xi:\Omega\to\mathcal{M}$ is a function (the \textit{problem function});
\item $\Lambda\subseteq \C^\Omega$ is an evaluation set in the sense of \cref{def:eval-set}.
\end{itemize}
We impose the \textit{consistency \enquote{axiom}}:
\begin{equation}\label{eq:consistency}
\forall A,B\in\Omega,\quad \Xi(A)\neq \Xi(B)\ \Longrightarrow\ \exists f\in\Lambda\ \text{s.t. } f(A)\neq f(B).
\end{equation}
\end{definition}

For any set $X$, we will write in the following
\[
[X]^{<\omega} := \{E \subseteq X : E \text{ is finite}\}.
\]

\begin{definition}[General algorithm]\label{def:gen-alg}
Let $\mathcal{P}=\bigl( \Xi,\Omega,(\mathcal{M},d),\Lambda \bigr)$ be a computational problem.
A \textit{general algorithm} for $\mathcal{P}$ is a pair $(\Gamma,\Lambda_\Gamma)$ where
\[
\Gamma:\Omega\to \mathcal{M}
\quad\text{and}\quad
\Lambda_\Gamma:\Omega\to [\Lambda]^{<\omega}
\]
assigns to each $A\in\Omega$ a finite nonempty set of queries $\Lambda_\Gamma(A)\subseteq\Lambda$
such that for all $A,B\in\Omega$:\hfill
\begin{enumerate}
\item (\textbf{Finite-information dependence}): If $f(B)=f(A)$ for all $f\in\Lambda_\Gamma(A)$,
then $\Gamma(B)=\Gamma(A)$.
\item (\textbf{Stability of the query set}): If $f(B)=f(A)$ for all $f\in\Lambda_\Gamma(A)$,
then $\Lambda_\Gamma(B)=\Lambda_\Gamma(A)$.
\end{enumerate}
\end{definition}

\begin{definition}[Query policy; fixed vs.\ adaptive querying]\label{def:query-policy}
Let $\mathcal{P}=\bigl( \Xi,\Omega,(\mathcal{M},d),\Lambda \bigr)$ be an SCI computational problem and let $(\Gamma,\Lambda_\Gamma)$
be a general algorithm for $\mathcal{P}$ in the sense of \cref{def:gen-alg}.

The map $\Lambda_\Gamma:\Omega\to[\Lambda]^{<\omega}$ is called the
\textit{query policy} of the algorithm.

\begin{enumerate}
\item We call $(\Gamma,\Lambda_\Gamma)$ \textit{fixed-query} if there exists a finite set
$Q\subseteq\Lambda$ such that $\Lambda_\Gamma(A)=Q$ for all $A\in\Omega$.
\item Otherwise we call $(\Gamma,\Lambda_\Gamma)$ \textit{adaptive-query}.
Equivalently, it is adaptive-query iff there exist $A,B\in\Omega$ with
$\Lambda_\Gamma(A)\neq\Lambda_\Gamma(B)$.
\end{enumerate}
\end{definition}

\begin{remark}[Extensional meaning of adaptivity]\label{rem:extensional-adaptivity}
\Cref{def:gen-alg} is the extensional footprint of adaptive oracle access: the set of
queries used on input $A$ is determined solely by the answers to \textit{those} queries.
Formally, for all $A,B\in\Omega$,
\[
\Bigl(\forall f\in\Lambda_\Gamma(A)\;\; f(B)=f(A)\Bigr)
\ \Longrightarrow\
\Bigl(\Lambda_\Gamma(B)=\Lambda_\Gamma(A)\ \wedge\ \Gamma(B)=\Gamma(A)\Bigr).
\]
Thus two inputs that induce the same query-answer transcript on the queried evaluations
force identical algorithmic behaviour (same queries, same output).
\end{remark}

\begin{remark}[Suppressing the query policy]\label{rem:suppress-query-policy}
When only the output map matters, we will (ab)use notation and write $\Gamma$ for the
pair $(\Gamma,\Lambda_\Gamma)$, keeping $\Lambda_\Gamma$ implicit; when needed we
write $\Lambda_\Gamma(A)$ explicitly.
This avoids ambiguity later when towers are written only in terms of their output maps.
\end{remark}

\begin{definition}[Standard extensional tower of algorithms]\label{def:standard-tower}
Let
\[
        \mathcal P=(\Xi,\Omega,(\mathcal M,d),\Lambda)
\]
be an SCI computational problem and let \(k\in\mathbb N_0\).

A standard tower of height \(0\) for \(\mathcal P\) is a single general algorithm $(\Gamma,\Lambda_\Gamma)$ such that
\[
        \Gamma(A)=\Xi(A)
\]
for $A\in\Omega$. For \(k\in \mathbb{N}\), a standard tower of height \(k\) for \(\mathcal P\) consists of general algorithms
\[
        \Gamma_{n_k,\ldots,n_1}:\Omega\to \mathcal M, \, (n_1,\ldots,n_k)\in\mathbb N^k,
\]
such that for every \(A\in\Omega\) the iterated limit
\[
        \Xi(A) = \lim_{n_k\to\infty} \lim_{n_{k-1}\to\infty} \cdots \lim_{n_1\to\infty} \Gamma_{n_k,\ldots,n_1}(A)
\]
exists in the metric \(d\).

Equivalently, the intermediate maps are defined recursively, whenever the displayed limits exist, by
\[
        \Gamma_{n_k,\ldots,n_r}(A) := \lim_{n_{r-1}\to\infty} \Gamma_{n_k,\ldots,n_{r-1}}(A),
\]
where $r=2,\ldots,k$, and
\[
        \Gamma_{n_k}(A) := \lim_{n_{k-1}\to\infty} \Gamma_{n_k,n_{k-1}}(A).
\]
These intermediate maps are semantic pointwise limits. They are not required to be general algorithms. Only the deepest-level maps
\[
        \Gamma_{n_k,\ldots,n_1}
\]
are required to be general algorithms.
\end{definition}

\begin{definition}[Uniformly implemented tower]\label{def:uniformly-implemented-tower}
Let
\[
        \mathcal P=(\Xi,\Omega,(\mathcal M,d),\Lambda)
\]
be an SCI computational problem, and let \(\mathfrak C\) be a specified implementation class.

For \(k=0\), a height-\(0\) \(\mathfrak C\)-implemented tower for \(\mathcal P\) is a single \(\mathfrak C\)-admissible map
\[
        \mathcal A:\Omega\to \mathcal M
\]
such that
\[
        \mathcal A(A)=\Xi(A)
\]
for $A\in\Omega$.

For \(k\ge1\), a height-\(k\) \(\mathfrak C\)-implemented tower for \(\mathcal P\) consists of a single \(\mathfrak C\)-admissible indexed approximation scheme
\[
        \mathcal A:\mathbb N^k\times\Omega\to \mathcal M, \, (n_k,\ldots,n_1,A)\mapsto
        \mathcal A(n_k,\ldots,n_1;A),
\]
such that for every \(A\in\Omega\),
\[
        \Xi(A) =
        \lim_{n_k\to\infty}
        \lim_{n_{k-1}\to\infty}
        \cdots
        \lim_{n_1\to\infty}
        \mathcal A(n_k,\ldots,n_1;A).
\]

The phrase \(\mathfrak C\)-admissible means that the whole indexed table is implemented
uniformly in the indices and in the input according to the model \(\mathfrak C\).
For example:
\begin{enumerate}
\item for a BSS implementation, \(\mathcal A\) is produced by one oracle BSS machine
which receives \((n_k,\ldots,n_1)\) as ordinary input and has oracle access to \(A\);
\item for a Type-2 implementation, \(\mathcal A\) is realized by one Type-2 functional
which receives a name of \(A\) and the indices and outputs a name of the approximant;
\item for a regularity-restricted implementation, the indexed approximation map is
required to have the prescribed regularity uniformly in the indices.
\end{enumerate}
\end{definition}

Now we state first the tower definitions as in most of the SCI-literature.
\begin{definition}[Historical informal tower types in the SCI literature]\label{def:tower-types}
One may impose restrictions on the deepest-level approximants in a standard tower, or one may impose a uniform implementation requirement on the whole indexed table.
\begin{itemize}
\item \textbf{General towers ($\alpha=G$):} no restriction beyond \cref{def:standard-tower}.
\item \textbf{Arithmetic towers ($\alpha=A$):} A map $\Gamma:\Omega\to\mathcal{M}$ is called an \textit{arithmetic algorithm} if
$\Gamma$ is a general algorithm in the sense of \cref{def:gen-alg} and, moreover,
for every $A\in\Omega$ the value $\Gamma(A)$ is obtained from the finite tuple
$\bigl(f(A)\bigr)_{f\in\Lambda_\Gamma(A)}$ by a finite computation using only
\textit{arithmetic operations} on $\C$. We call a tower of algorithms in the sense of \cref{def:standard-tower} an \textit{arithmetic tower}, 
if each deepest-level map $\Gamma_{n_k,\ldots,n_1}$ is an arithmetic algorithm.
\end{itemize}
\end{definition}

We note that for the purposes of this work \cref{def:tower-types} is \textit{not} precise enough. First \enquote{finite computation} is not a mathematical object unless one chooses a machine model (or equivalent syntax); \enquote{comparisons} over $\C$ are underspecified and for general $\mathcal M$ it is unclear what it means to \enquote{output} an element of $\mathcal M$. The most natural fix we could think of is a BSS-specification, which we will make precise now. This precision is in line with the more precise SCI-definitions provided e.g.\ in \cite[\S 2]{colbrook2020foundations} or in \cite[\S 6]{ben2015computing}.

\begin{definition}[$\R^\star$ (finite real tuples)]\label{def:Rstar}
Let
\[
\R^\star \ :=\ \bigsqcup_{m\in\N} \R^m
\]
be the disjoint union of all finite-dimensional Euclidean spaces.
\end{definition}

\begin{definition}[Finite output interface]\label{def:finite-output-interface}
Let $(\mathcal{M},d)$ be a metric space.
A \textit{finite output interface} for $\mathcal{M}$ is a fixed (possibly partial) map
\[
\delta^{\mathrm{fin}}_{\mathcal{M}}:\subseteq \R^\star \to \mathcal{M}.
\]
Elements $u\in\dom(\delta^{\mathrm{fin}}_{\mathcal{M}})$ are called \textit{finite codes} for
$\delta^{\mathrm{fin}}_{\mathcal{M}}(u)\in\mathcal{M}$.
\end{definition}

\begin{definition}[Evaluation oracle for a countable evaluation set]\label{def:eval-oracle}
Assume $\Lambda$ is countable and fix an enumeration $\Lambda=(f_i)_{i\in\N}$.
For each $A\in\Omega$ define the \textit{evaluation oracle}
\[
\mathcal{O}_A:\N\to\R^2,\qquad \mathcal{O}_A(i):=\bigl(\operatorname{Re}(f_i(A)),\operatorname{Im}(f_i(A))\bigr).
\]
\end{definition}

\begin{definition}[Oracle BSS machine (parameter-free; partial QUERY)]\label{def:oracle-bss}
Fix an enumeration $\Lambda=(f_i)_{i\in\N}$ and the associated oracle maps
$\mathcal{O}_A:\N\to\R^2$ from \cref{def:eval-oracle}.
An \textit{oracle BSS machine over $(\R,+,-,\cdot,/,<)$} in the sense of \cite{BSS98}
\textit{without real parameters} is a BSS machine whose only built-in constants are
$0$ and $1$ (hence all rational constants are computable inside the machine),
equipped with an additional instruction
\[
\mathrm{QUERY}: \
\text{read } i \text{ from a designated register and write } \mathcal{O}_A(i)\in\R^2
\text{ into designated registers.}
\]
The instruction $\mathrm{QUERY}$ is \textit{partial}: it is defined only if the designated
register contains a natural number $i\in\N$ (encoded as the real $i$); otherwise the run
is undefined (equivalently: the machine diverges).

A run \textit{halts} if it reaches a halting state after finitely many steps; upon halting it
outputs a finite tuple $u\in\R^\star$ (encoded in the standard BSS way).
\end{definition}

This is the formal replacement for \enquote{finite computation using arithmetic operations and comparisons}.
\begin{definition}[BSS-arithmetic algorithm]\label{def:BSS-arith-alg}
Let $\mathcal{P}=\bigl( \Xi,\Omega,(\mathcal{M},d),\Lambda \bigr)$ be a computational problem.
Assume:
\begin{enumerate}
\item $\Lambda$ is countable with a fixed enumeration $\Lambda=(f_i)_{i\in\N}$;
\item $\mathcal{M}$ is equipped with a fixed finite output interface
      $\delta^{\mathrm{fin}}_{\mathcal{M}}$ as in \cref{def:finite-output-interface}.
\end{enumerate}
A map $\Gamma:\Omega\to\mathcal{M}$ is called a \textit{BSS-arithmetic algorithm}
if there exists an oracle BSS machine $\mathbb{M}$ in the sense of \cref{def:oracle-bss}
such that for every $A\in\Omega$ the run of $\mathbb{M}$ with oracle $\mathcal{O}_A$
halts and outputs some $u\in\dom(\delta^{\mathrm{fin}}_{\mathcal{M}})$ with
\[
\delta^{\mathrm{fin}}_{\mathcal{M}}(u)=\Gamma(A).
\]
We require $\mathbb M$ to be parameter-free in the sense of \cref{def:oracle-bss}.
\end{definition}

Now we can give a formal precise version of \cref{def:tower-types}.
\begin{definition}[Tower types: general (G) vs.\ arithmetic (A)]\label{def:tower-types-BSS}
Let
\[
        \mathcal P=(\Xi,\Omega,(\mathcal M,d),\Lambda)
\]
be an SCI computational problem.
\begin{itemize}
\item \textbf{Raw general towers ($\alpha=G$):} A type-G tower is a standard tower in the sense of \cref{def:standard-tower}.
\item \textbf{Uniformly BSS-arithmetic towers ($\alpha=A$):} assume $\Lambda$ is countable with fixed enumeration and
$(\mathcal{M},d)$ comes with a fixed finite output interface as in
\cref{def:BSS-arith-alg}. A height-\(k\) BSS-arithmetic tower is a standard tower whose deepest-level approximants
\[
        \Gamma_{n_k,\ldots,n_1}
\]
are produced uniformly by one parameter-free oracle BSS machine. More precisely, there is a single oracle BSS machine \(\mathbb M\) such that, for every
\[
        (n_k,\ldots,n_1)\in\mathbb N^k
        \quad\text{and every }A\in\Omega,
\]
the machine \(\mathbb M\), given the indices \((n_k,\ldots,n_1)\) in its input
registers and oracle access to \(O_A\), halts and outputs a code
\[
        u_{n_k,\ldots,n_1}(A)\in\operatorname{dom}(\delta_\mathcal M^{\rm fin})
\]
with
\[
        \delta_\mathcal M^{\rm fin}(u_{n_k,\ldots,n_1}(A))
        =
        \Gamma_{n_k,\ldots,n_1}(A).
\]
For height \(0\), the same definition is read with no index input.
\end{itemize}
\end{definition}

\begin{definition}[SCI]\label{def:sci}
Let \(\mathcal P\) be an SCI computational problem.

\begin{enumerate}
\item[(S1)] The raw type-G SCI is defined by
\[
        \SCIG(\mathcal P)
        :=
        \min\{k\in\mathbb N_0:
        \text{there exists a standard raw type-G tower of height }k\text{ for } \mathcal P\},
\]
with value \(\infty\) if the set is empty.

\item[(S2)] When the BSS-arithmetic implementation of \cref{def:tower-types-BSS} is available, define
\[
        \SCIA^{\rm BSS}(\mathcal P)
        :=
        \min\{k\in\mathbb N_0:
        \text{there exists a uniformly BSS-arithmetic tower of height }k\text{ for } \mathcal P\},
\]
with value \(\infty\) if the set is empty.
\end{enumerate}
\end{definition}

We write $\SCIAbss(\mathcal P)$ for $\mathrm{SCI}_A(\mathcal P)$ when the tower type $\alpha=A$
is interpreted via BSS-arithmetic base maps as in \cref{def:BSS-arith-alg,def:tower-types-BSS}.

\begin{remark}[Why extra structure and uniformity are logically necessary]\label{rem:need-extra}
\Cref{def:comp-prob,def:gen-alg,def:standard-tower} are \textit{purely extensional} in that they
are statements about maps between underlying sets. There are two distinct missing ingredients for a represented-space computability model. First, unrestricted type-G post-processing allows arbitrary maps on finite transcripts; therefore finite-query factorizations collapse raw \(\SCIG\) to height \(0\). Second, even if the deepest-level approximants are individually computable, one still
needs uniformity in the tower indices. A computability model should provide one procedure producing the indexed approximation table, not a separately chosen procedure for every multi-index. This is the role of implemented towers in the sense of \cref{def:uniformly-implemented-tower} and, in the Weihrauch setting, of the pure $\mathcal R-\Lim$ normal form in \cref{sec:uniformity-minimality}.
\end{remark}

\begin{definition}[All-level finite-information tower]\label{def:all-level-tower}
An all-level finite-information tower is a standard tower in which every intermediate semantic limit map
\[
        \Gamma_{n_k},\Gamma_{n_k,n_{k-1}},\ldots,\Gamma_{n_k,\ldots,n_2}
\]
is also required to be a general algorithm in the sense of \cref{def:gen-alg}.
\end{definition}

\begin{proposition}[Collapse of finite all-level towers]\label{prop:all-level-collapse}
Let \(\mathrm{SCI}^{\rm all}_G(\mathcal P)\) denote the least height of an all-level finite-information tower computing \(\mathcal P\). Then, for every SCI problem \(\mathcal P\),
\[
        0<\mathrm{SCI}^{\rm all}_G(\mathcal P)<\infty \quad\Longrightarrow\quad \mathrm{SCI}^{\rm all}_G(\mathcal P)=1.
\]
Equivalently, the all-level finite hierarchy has only the finite values \(0\) and \(1\).
\end{proposition}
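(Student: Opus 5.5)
Suppose an all-level finite-information tower of some finite height \(k\ge 1\) computes \(\mathcal P\), with maps \(\Gamma_{n_k,\ldots,n_1}\). The plan is to discard every index level except the outermost one, keeping only the first-level intermediate maps \(\Gamma_{n_k}:\Omega\to\mathcal M\), \(n_k\in\mathbb N\). By \cref{def:all-level-tower}, each \(\Gamma_{n_k}\) is itself a general algorithm in the sense of \cref{def:gen-alg}; it comes with some query policy \(\Lambda_{\Gamma_{n_k}}\) satisfying finite-information dependence and stability. By \cref{def:standard-tower}, \(\Xi(A)=\lim_{n_k\to\infty}\Gamma_{n_k}(A)\) for every \(A\in\Omega\). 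So the family \((\Gamma_{n_k})_{n_k\in\mathbb N}\), re-indexed as \(\Gamma'_{n}:=\Gamma_n\), is a standard tower of height \(1\).

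It remains to check that this height-\(1\) tower is also all-level. For height \(1\) there are no intermediate semantic maps: the list \(\Gamma_{n_k},\ldots,\Gamma_{n_k,\ldots,n_2}\) in \cref{def:all-level-tower} is empty when \(k=1\). The only requirement is therefore that the deepest maps be general algorithms, which we have already noted. Hence \(\mathrm{SCI}^{\rm all}_G(\mathcal P)\le 1\) whenever it is finite. Combined with the hypothesis \(\mathrm{SCI}^{\rm all}_G(\mathcal P)>0\), this gives equality with \(1\).

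The equivalent formulation follows immediately: any finite value is either \(0\) or positive, and a positive finite value equals \(1\). The only point that needs care, and what I expect to be the sole obstacle, is bookkeeping. One must confirm that for \(k\ge2\) the map \(\Gamma_{n_k}\) really appears in the list of \cref{def:all-level-tower}, which it does since it is the first entry. One must also confirm that the query policies attached to the \(\Gamma_{n_k}\) may be chosen independently for each \(n_k\), as in \cref{def:standard-tower}, where no uniformity in the indices is imposed on raw towers. Both hold by inspection of the definitions, so no analytic argument is needed.
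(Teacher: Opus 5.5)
Your proposal is correct and follows the paper's own argument: the outermost maps \(\Gamma_{n_k}\) are general algorithms by the all-level requirement and converge pointwise to \(\Xi\), so they form a height-\(1\) tower, and a positive finite value must therefore equal \(1\). You also check explicitly that this height-\(1\) tower is itself all-level, since no intermediate maps remain. The paper passes over that point silently, so your bookkeeping is slightly more careful.
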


\begin{proof}
Suppose \(\mathcal P\) has an all-level tower of height \(k\in \mathbb{N}\). By definition, the outermost maps
\[
        \Gamma_{n_k}:\Omega\to \mathcal M
\]
are themselves general algorithms, and
\[
        \Xi(A)=\lim_{n_k\to\infty}\Gamma_{n_k}(A)
\]
for $A\in\Omega$. Thus \((\Gamma_{n_k})_{n_k\in\mathbb N}\) is already a height-\(1\) type-G tower. Hence
\[
        \mathrm{SCI}^{\rm all}_G( \mathcal P)\le1.
\]
If the height is positive, it is not \(0\), so it must be \(1\).
\end{proof}

\section{The Minimal Logical Role Of The Evaluation Set \texorpdfstring{$\Lambda$}{Lambda}}{\label{sec:MinLogRoleEv}}

\begin{definition}[Full evaluation table and $\Lambda$-indistinguishability]
For each $f\in\Lambda$ let $D_f$ denote its codomain. Define the \textit{full evaluation table map}
\[
  \mathrm{Ev}_\Lambda:\Omega \to \prod_{f\in\Lambda} D_f,
  \qquad
  \mathrm{Ev}_\Lambda(A) := (f(A))_{f\in\Lambda}.
\]
(If all $D_f=\mathbb{C}$, then $\prod_{f\in\Lambda}D_f = \mathbb{C}^\Lambda$.)

Define an equivalence relation $\sim_\Lambda$ on $\Omega$ by
\[
  A\sim_\Lambda B \quad:\Longleftrightarrow\quad \forall f\in\Lambda,\ f(A)=f(B).
\]
\end{definition}

\begin{theorem}[Consistency $\Leftrightarrow$ factorization through the evaluation table]\label{th:consEvTable}
Let $\mathcal{P}=\bigl( \Xi,\Omega,(\mathcal{M},d),\Lambda \bigr)$.
The following are equivalent:
\begin{enumerate}
  \item (Consistency) $\Xi(A)\neq\Xi(B)\Rightarrow \exists f\in\Lambda:\ f(A)\neq f(B)$.
  \item (Extensionality) $A\sim_\Lambda B \Rightarrow \Xi(A)=\Xi(B)$.
  \item (Factorization) There exists a unique map
    \[
      \Phi:\mathrm{Ev}_\Lambda(\Omega)\to \mathcal{M}
    \]
    such that $\Xi=\Phi\circ \mathrm{Ev}_\Lambda$.
\end{enumerate}
\end{theorem}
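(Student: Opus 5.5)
I will prove the cycle of implications (1)$\Rightarrow$(2)$\Rightarrow$(3)$\Rightarrow$(1), treating uniqueness in (3) separately. All three conditions are statements about the fibres of $\mathrm{Ev}_\Lambda$, so the argument is elementary set theory. The one point needing care is that $\Phi$ is only required on the image $\mathrm{Ev}_\Lambda(\Omega)$, not on all of $\prod_{f\in\Lambda}D_f$. That restriction is exactly what makes uniqueness hold.

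For (1)$\Leftrightarrow$(2), I will observe that (2) is the contrapositive of (1). Indeed, $A\sim_\Lambda B$ is by definition the negation of $\exists f\in\Lambda:\ f(A)\neq f(B)$, and $\Xi(A)=\Xi(B)$ is the negation of $\Xi(A)\neq\Xi(B)$. Both directions therefore follow at once. I will also record that $A\sim_\Lambda B$ holds iff $\mathrm{Ev}_\Lambda(A)=\mathrm{Ev}_\Lambda(B)$, because two elements of a product agree iff all their coordinates agree. This identification drives the remaining steps.

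For (2)$\Rightarrow$(3), I will define $\Phi$ on $\mathrm{Ev}_\Lambda(\Omega)$. Given $t$ in the image, pick any $A\in\Omega$ with $\mathrm{Ev}_\Lambda(A)=t$ and set $\Phi(t):=\Xi(A)$. Well-definedness is the key check. If $\mathrm{Ev}_\Lambda(A)=\mathrm{Ev}_\Lambda(B)=t$, then $A\sim_\Lambda B$, so by (2) we get $\Xi(A)=\Xi(B)$. The choice of representative can be avoided by defining $\Phi$ as a relation and showing it is functional, so no appeal to choice is needed. Then $\Xi=\Phi\circ\mathrm{Ev}_\Lambda$ holds by construction.

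For uniqueness, suppose $\Phi'$ is another such map. Every $t$ in the domain has the form $t=\mathrm{Ev}_\Lambda(A)$, so $\Phi'(t)=\Xi(A)=\Phi(t)$. This is where restricting the domain to the image matters, since values off the image would otherwise be arbitrary.

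For (3)$\Rightarrow$(1), assume $\Xi(A)\neq\Xi(B)$. Then $\Phi(\mathrm{Ev}_\Lambda(A))\neq\Phi(\mathrm{Ev}_\Lambda(B))$, so $\mathrm{Ev}_\Lambda(A)\neq\mathrm{Ev}_\Lambda(B)$ because $\Phi$ is a function. Hence some coordinate differs, that is, there is an $f\in\Lambda$ with $f(A)\neq f(B)$. Only existence, not uniqueness, of $\Phi$ is used here.

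I expect no real obstacle. The only delicate point is the well-definedness of $\Phi$, together with being explicit that uniqueness refers to maps defined on the image.
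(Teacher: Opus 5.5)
Your proposal is correct and follows essentially the same route as the paper: (1)$\Leftrightarrow$(2) by contraposition, then (2)$\Rightarrow$(3) by defining $\Phi$ on the image $\mathrm{Ev}_\Lambda(\Omega)$, with well-definedness coming from (2) and uniqueness from the domain being exactly the image, and finally the converse from the fact that $\Phi$ is a function. The only cosmetic difference is that you close the cycle with (3)$\Rightarrow$(1) whereas the paper proves (3)$\Rightarrow$(2), which is immaterial since (1) and (2) are contrapositives of each other.
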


\begin{proof}
(1)$\Leftrightarrow$(2) is contrapositive.
Assume (2). Define $\Phi$ on $\mathrm{Ev}_\Lambda(\Omega)$ by
$\Phi(\mathrm{Ev}_\Lambda(A)):=\Xi(A)$.
This is well-defined: if $\mathrm{Ev}_\Lambda(A)=\mathrm{Ev}_\Lambda(B)$ then $A\sim_\Lambda B$, hence $\Xi(A)=\Xi(B)$.
Then $\Xi=\Phi\circ \mathrm{Ev}_\Lambda$ by construction.
Uniqueness holds because $\mathrm{Ev}_\Lambda(\Omega)$ is the image.
Conversely, (3) implies (2) because equality of evaluation tables forces equality of $\Phi$-images.
\end{proof}

\begin{remark}[Evaluation-induced topology]
An evaluation family \(\Lambda\) induces the initial topology generated by the maps in
\(\Lambda\). For point evaluations on \(C(X,Y)\), this is the topology of pointwise
convergence, not the compact-open topology. Thus finite point-evaluation information
does not automatically provide the uniform control usually associated with compact-open
or compactly generated function-space topologies.
\end{remark}

\begin{corollary}[Necessity of consistency]\label{cor:NecConsis}
If consistency fails (i.e.\ there exist $A,B\in\Omega$ with $A\sim_\Lambda B$ but $\Xi(A)\neq\Xi(B)$),
then there is no tower of general algorithms (of any finite height) computing $\Xi$ from $\Lambda$.
Hence $\mathrm{SCI}_G(\mathcal{P})=\infty$.
\end{corollary}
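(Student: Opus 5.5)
The idea is to show that every map produced at any level of a tower is constant on $\sim_\Lambda$-classes, and then observe that $\Xi$, being a pointwise limit of such maps, must also be constant on these classes. This contradicts the assumed pair $A\sim_\Lambda B$ with $\Xi(A)\neq\Xi(B)$. The argument is really a statement about towers of every height, including height $0$. The conclusion $\SCIG(\mathcal P)=\infty$ then follows directly from \cref{def:sci}, since the set of admissible heights is empty.

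\emph{Base step.} Let $(\Gamma,\Lambda_\Gamma)$ be any general algorithm and suppose $A\sim_\Lambda B$. Then $f(B)=f(A)$ for all $f\in\Lambda$, and in particular for all $f\in\Lambda_\Gamma(A)\subseteq\Lambda$. The finite-information dependence clause of \cref{def:gen-alg} then gives $\Gamma(B)=\Gamma(A)$. So every deepest-level map $\Gamma_{n_k,\ldots,n_1}$ of a standard tower is $\sim_\Lambda$-invariant. For height $0$ this already finishes the proof, because $\Xi=\Gamma$.

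\emph{Inductive step through the limits.} Using the recursive description in \cref{def:standard-tower}, I induct downward in the number of remaining indices. Suppose every $\Gamma_{n_k,\ldots,n_{r-1}}$ is $\sim_\Lambda$-invariant and fix $A\sim_\Lambda B$. The sequences $\bigl(\Gamma_{n_k,\ldots,n_{r-1}}(A)\bigr)_{n_{r-1}}$ and $\bigl(\Gamma_{n_k,\ldots,n_{r-1}}(B)\bigr)_{n_{r-1}}$ coincide term by term. Because limits in a metric space are unique (this uses axiom (i) of \cref{def:metric}), the two limits are equal. Hence $\Gamma_{n_k,\ldots,n_r}(A)=\Gamma_{n_k,\ldots,n_r}(B)$.

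One subtlety needs attention: the intermediate maps are only defined where the limits exist. However, since the two sequences are identical, the limit for $B$ exists exactly when the limit for $A$ does. This is the only point that requires care, and the tower definition requires the full iterated limit to exist at every input in any case. After $k$ steps we obtain $\Xi(A)=\Xi(B)$, which contradicts the assumption. Equivalently, by \cref{th:consEvTable}, any $\Xi$ computed by a tower factors through $\mathrm{Ev}_\Lambda$, and this factorization is impossible when consistency fails.
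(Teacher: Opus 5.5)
Your proof is correct and takes the same approach as the paper: finite-information dependence makes every deepest-level general algorithm $\sim_\Lambda$-invariant, and this invariance passes through the iterated limits, contradicting $\Xi(A)\neq\Xi(B)$. Your explicit downward induction, using uniqueness of metric limits and the fact that identical sequences have identical limit behaviour, just spells out the paper's one-line step ``hence every iterated limit agrees at $A$ and $B$''.
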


\begin{proof}
Let $\{\Gamma_{n_k,\dots,n_1}\}$ be the deepest-level maps of any proposed finite-height tower.
Each deepest-level general algorithm $\Gamma_{n_k,\dots,n_1}$ depends only on finitely many evaluations, so if $A\sim_\Lambda B$ then in particular
$f(A)=f(B)$ for all $f\in\Lambda_{\Gamma_{n_k,\dots,n_1}}(A)$.
By \cref{def:gen-alg}(1), $\Gamma_{n_k,\dots,n_1}(A)=\Gamma_{n_k,\dots,n_1}(B)$
for all indices, hence every iterated limit agrees at $A$ and $B$, contradicting $\Xi(A)\neq\Xi(B)$.
\end{proof}

\section{Effective Enrichment: Represented Spaces Induced By Countable \texorpdfstring{$\Lambda$}{Lambda}}{\label{sec:EffEnrich}}

We recall some basic definitions from Weihrauch theory.

\begin{definition}[Represented space]\label{def:rep-space}
A \textit{represented space} is a pair $X=(|X|,\delta_X)$ where
$\delta_X:\subseteq \Baire \to |X|$ is a partial surjection.
A $p\in\Baire$ with $\delta_X(p)=x$ is called a \textit{name} of $x$.
\end{definition}

\begin{definition}[Standard representations for $\C$ and countable products]\label{def:rep-C-product}
Fix a computable dense sequence
\[
Q:\mathbb{N}\to\mathbb{C}.
\]
Let $\delta_{\mathbb{C}}$ be the standard Cauchy representation of $\mathbb{C}$ induced by
the usual metric and the dense sequence $Q$.
(Equivalently, any Cauchy representation induced by the usual metric and a computable
dense sequence; any two such Cauchy representations are computably isomorphic.)

For a sequence of represented spaces $(D_n,\delta_n)_{n\in\N}$, define the product
representation $\delta_{\prod D_n}$ on $\prod_n D_n$ via any fixed computable coding
of sequences of names into one name. In particular, this yields the standard product
representation $\delta_{\C^\N}$ on $\C^\N$.
\end{definition}

\begin{definition}[Countable evaluation set and evaluation-table map]\label{def:countable-Lambda}
Let $\Lambda\subseteq\C^\Omega$ be \textit{countable}. Fix once and for all
\[
\Lambda=\{f_n\}_{n\in\N},\qquad f_n:\Omega\to\C.
\]
Define the \textit{evaluation-table map}
\[
\mathrm{Ev}_\Lambda:\Omega\to \C^\N,\qquad \mathrm{Ev}_\Lambda(A):=(f_n(A))_{n\in\N}.
\]
Define the \textit{$\Lambda$-information set}
\[
\mathcal{I}_\Lambda:=\mathrm{Ev}_\Lambda(\Omega)\subseteq \C^\N.
\]
Equip $\C^\N$ with the product representation $\delta_{\C^\N}$ in the sense of \cref{def:rep-C-product},
and equip $\mathcal{I}_\Lambda$ with the induced subspace representation, denoted $\delta_{\mathcal{I}_\Lambda}$.
\end{definition}

\begin{lemma}[Consistency $\Rightarrow$ $\Xi$ factors through $\mathcal{I}_\Lambda$]\label{lem:factor}
Assume the consistency \enquote{axiom} holds for $\mathcal{P}=\bigl( \Xi,\Omega,(\mathcal{M},d),\Lambda \bigr)$ and that
$\Lambda=\{f_n\}_{n\in\mathbb{N}}$ is countable. Let $\mathrm{Ev}_\Lambda:\Omega\to\mathbb{C}^{\mathbb{N}}$
be the evaluation-table map and $I_\Lambda:=\mathrm{Ev}_\Lambda(\Omega)$.
Then there is a unique map $\widehat{\Xi}:I_\Lambda\to \mathcal{M}$ such that
$\widehat{\Xi}(\mathrm{Ev}_\Lambda(A))=\Xi(A)$ for all $A\in\Omega$.
\end{lemma}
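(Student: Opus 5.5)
The plan is to reduce the lemma to \cref{th:consEvTable}, specialised to the countable enumeration $\Lambda=\{f_n\}_{n\in\N}$ fixed in \cref{def:countable-Lambda}. The only point needing care is that \cref{th:consEvTable} factors $\Xi$ through $\prod_{f\in\Lambda}D_f=\C^\Lambda$ indexed by the set $\Lambda$. Here, by contrast, $\mathrm{Ev}_\Lambda$ takes values in $\C^\N$ and is indexed by the enumeration. So I first check that the two evaluation tables induce the same indistinguishability relation on $\Omega$.

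\textbf{Step 1 (same kernel).} For $A,B\in\Omega$,
\[
\bigl(f_n(A)\bigr)_{n\in\N}=\bigl(f_n(B)\bigr)_{n\in\N}\iff \forall f\in\Lambda:\ f(A)=f(B)\iff A\sim_\Lambda B.
\]
The first equivalence holds because $n\mapsto f_n$ is surjective onto $\Lambda$. Repetitions in the enumeration are harmless, since they only duplicate coordinates.

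\textbf{Step 2 (existence).} By \cref{th:consEvTable}, (1)$\Rightarrow$(2), consistency gives $A\sim_\Lambda B\Rightarrow\Xi(A)=\Xi(B)$. I then set $\widehat\Xi(\mathrm{Ev}_\Lambda(A)):=\Xi(A)$ on $I_\Lambda$. This is well defined: if $\mathrm{Ev}_\Lambda(A)=\mathrm{Ev}_\Lambda(B)$, then Step 1 gives $A\sim_\Lambda B$, hence $\Xi(A)=\Xi(B)$. Every point of $I_\Lambda$ is of the form $\mathrm{Ev}_\Lambda(A)$, so $\widehat\Xi$ is defined on all of $I_\Lambda$. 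The identity $\widehat\Xi\circ\mathrm{Ev}_\Lambda=\Xi$ holds by construction.

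\textbf{Step 3 (uniqueness).} Suppose $\Psi:I_\Lambda\to\mathcal M$ also satisfies $\Psi\circ\mathrm{Ev}_\Lambda=\Xi$. Then $\Psi$ and $\widehat\Xi$ agree on every point $\mathrm{Ev}_\Lambda(A)$. Because $\mathrm{Ev}_\Lambda$ is surjective onto its image $I_\Lambda$, the two maps are equal.

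There is no real obstacle here. The only subtlety is the reindexing in Step 1, where a non-injective enumeration must not break the argument. An alternative route would be to compose the factor $\Phi$ from \cref{th:consEvTable} with the bijection between $\mathrm{Ev}_\Lambda(\Omega)\subseteq\C^\Lambda$ and $I_\Lambda\subseteq\C^\N$ induced by the enumeration; Step 1 is exactly the fact that this correspondence is a bijection. The lemma is purely set-theoretic, so the representation $\delta_{\mathcal I_\Lambda}$ plays no role in it.
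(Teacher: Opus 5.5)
Your proposal is correct and follows essentially the same route as the paper: define $\widehat\Xi(\mathrm{Ev}_\Lambda(A)):=\Xi(A)$, get well-definedness from ``all $f_n$ agree $\Rightarrow$ all $f\in\Lambda$ agree'' plus consistency, and get uniqueness from the surjectivity of $\mathrm{Ev}_\Lambda$ onto $I_\Lambda$. Your Step~1 only makes explicit the surjectivity of $n\mapsto f_n$ that the paper uses silently, and invoking (1)$\Rightarrow$(2) of \cref{th:consEvTable} is just the contrapositive of the consistency axiom that the paper applies directly.
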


\begin{proof}
This is up to notation the exact same statement as \cref{th:consEvTable}(3), still we give again a proof here for completeness. Define $\widehat{\Xi}:I_\Lambda\to \mathcal{M}$ by
\[
\widehat{\Xi}(x):=\Xi(A)\quad\text{for any }A\in\Omega\text{ with }\mathrm{Ev}_\Lambda(A)=x.
\]
We must show this is well-defined. Suppose $A,B\in\Omega$ satisfy
$\mathrm{Ev}_\Lambda(A)=\mathrm{Ev}_\Lambda(B)$. Then $f_n(A)=f_n(B)$ for every $n$,
hence $f(A)=f(B)$ for every $f\in\Lambda$. By the consistency \enquote{axiom}, this implies
$\Xi(A)=\Xi(B)$. Therefore $\widehat{\Xi}$ is well-defined.

By construction, $\widehat{\Xi}(\mathrm{Ev}_\Lambda(A))=\Xi(A)$ for all $A$.
Uniqueness follows because $\mathrm{Ev}_\Lambda:\Omega\to I_\Lambda$ is surjective.
\end{proof}

\begin{remark}[Enumeration-(in)dependence]\label{rem:enum-indep}
Let $(f_n)_{n\in\N}$ and $(g_n)_{n\in\N}$ be \textit{bijections}
$\mathbb{N}\to\Lambda$ (i.e.\ enumerations without repetition) of the same countable set $\Lambda$.
Then there exists a (unique) permutation $\pi:\N\to\N$ such that $g_n=f_{\pi(n)}$ for all $n$.

The induced coordinate permutation
\[
P_\pi:\C^\N\to\C^\N,\qquad P_\pi\bigl((z_n)_n\bigr):=(z_{\pi(n)})_n
\]
is always a homeomorphism (for the product topology). Moreover, with respect to the
standard product representation on $\C^\N$, the map $P_\pi$ is computable (and hence
a computable isomorphism of represented spaces) iff $\pi$ is a computable
function.

Consequently, $\mathcal{I}_\Lambda\subseteq\C^\N$ obtained from two enumerations are
always homeomorphic, but they are computably isomorphic only under computable reindexings.
\end{remark}

\subsection{Weihrauch Reducibility, Iterated Limits, And The SCI-Rank}

\begin{definition}[Pairing/tupling function]\label{def:tupling}
Fix once and for all a computable bijection
\[
\langle\cdot\rangle_k:\N^{k+1}\to\N
\]
for each $k\in\N$ (coding $(n_1,\dots,n_k,m)$ into one natural number).
\end{definition}

\begin{definition}[The limit operator $\Lim$ on Baire space]\label{def:lim}
Fix a computable bijection $\langle\cdot,\cdot\rangle:\NN^2\to\NN$.
For $p\in\NN^\NN$ write $p_n(k):=p(\langle n,k\rangle)$.
Define $\Lim:\subseteq\NN^\NN\to\NN^\NN$ by
\[
p\in\dom(\Lim)\;:\Longleftrightarrow\;(\forall k\in\NN)\; \text{the sequence }n\mapsto p_n(k)
\text{ is eventually constant},
\]
and then
\[
(\Lim(p))(k):=\lim_{n\to\infty} p_n(k).
\]
\end{definition}

\begin{definition}[Iterates of $\Lim$]\label{def:lim-iterates}
Define $\Limi{0}:=\id_{\NN^\NN}$ and $\Limi{n+1}:=\Lim\circ\Limi{n}$ for $n\in\NN_0$.
\end{definition}

\begin{definition}[Realizer]
Let $f:\subseteq X \rightrightarrows Y$ be a multi-valued map between represented spaces
$X=(|X|,\delta_X)$ and $Y=(|Y|,\delta_Y)$. A partial map
\[
F:\subseteq \mathbb{N}^{\mathbb{N}} \to \mathbb{N}^{\mathbb{N}}
\]
realizes $f$, written $F \vdash f$, if for every $p \in \dom(\delta_X)$ with
$\delta_X(p)\in\dom(f)$ one has $\delta_Y(F(p)) \in f(\delta_X(p))$.
\end{definition}

\begin{definition}[Weihrauch reducibility on represented spaces]\label{def:weihrauch}
Let $f:\subseteq X\rightrightarrows Y$ and $g:\subseteq Z\rightrightarrows W$
be multivalued functions on represented spaces.

\begin{itemize}
\item We say $f$ is \textit{Weihrauch reducible} to $g$ and write $f\leq_{\mathrm W} g$
if there exist computable partial functions $H,K:\subseteq \Baire \to \Baire$ such that
for every realizer $G\vdash g$, the function
\[
p \ \mapsto\ K\bigl(\pairWhite{p}{G(H(p))}\bigr)
\]
is a realizer of $f$.

\item We say $f$ is \textit{strongly Weihrauch reducible} to $g$ and write
$f\leq_{\mathrm{sW}} g$ if there exist computable $H,K$ such that for every $G\vdash g$,
\[
p \ \mapsto\ K\bigl(G(H(p))\bigr)
\]
realizes $f$ (so the post-processor $K$ does \textit{not} see the original input name $p$).
\end{itemize}

We write $f\equiv_{\mathrm W}g$ if $f\leq_{\mathrm W}g$ and $g\leq_{\mathrm W}f$.
\end{definition}

We will also use the notation of a \enquote{tightening}, which is defined as follows: For mulitvalued functions $f,g:\subseteq X \rightrightarrows Y$ on represented spaces $X,Y$, we call $f \preceq g$ a tightening of $g$ by $f$ iff $(\dom(g) \subseteq \dom(f) \, \land \, (\forall x \in \dom(g)) f(x) \subseteq g(x))$.

\begin{definition}[SCI-rank on the Weihrauch preorder]\label{def:rank}
For any (multi-valued) $f$ between represented spaces, define its \textit{Weihrauch-SCI rank}
\[
\mathrm{rank}_{\mathrm{W}}(f) \in \N_0\cup\{\infty\}
\]
by
\[
\mathrm{rank}_{\mathrm{W}}(f)
:=
\begin{cases}
\min\{k\in\N_0:\ f\le_{\mathrm{W}} \Limi{k}\}, & \text{if this set is nonempty,}\\
\infty, & \text{otherwise.}
\end{cases}
\]
\end{definition}

\begin{theorem}[Existence and well-posedness of $\mathrm{rank}_{\mathrm{W}}$]\label{thm:rank-wellposed}
For every (multi-valued) $f$ between represented spaces, $\mathrm{rank}_{\mathrm{W}}(f)$ is well-defined.
Moreover:
\begin{enumerate}
\item If $f\equiv_{\mathrm{W}} g$, then $\mathrm{rank}_{\mathrm{W}}(f)=\mathrm{rank}_{\mathrm{W}}(g)$ (so it descends to Weihrauch degrees).
\item If $f\le_{\mathrm{W}} g$, then $\mathrm{rank}_{\mathrm{W}}(f)\le \mathrm{rank}_{\mathrm{W}}(g)$ (order-preserving).
\end{enumerate}
\end{theorem}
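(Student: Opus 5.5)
The plan is to read well-definedness off the definition and reduce both monotonicity claims to transitivity of $\le_{\mathrm W}$.

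\textbf{Well-definedness.} Since $\N_0$ is well-ordered, any nonempty subset $S_f:=\{k\in\N_0: f\le_{\mathrm W}\Limi{k}\}$ has a least element. If $S_f$ is empty, the value $\infty$ is assigned. Hence $\mathrm{rank}_{\mathrm W}(f)$ is a uniquely determined element of $\N_0\cup\{\infty\}$. No appeal to the fact that $\Limi{k}$ is well defined as a partial map on $\Baire$ is needed beyond \cref{def:lim-iterates}.

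\textbf{Transitivity of $\le_{\mathrm W}$.} This is the one step with actual content. Suppose $f\le_{\mathrm W} g$ via $(H_1,K_1)$ and $g\le_{\mathrm W} h$ via $(H_2,K_2)$. I would set
\[
H:=H_2\circ H_1,\qquad K\bigl(\pairWhite{p}{q}\bigr):=K_1\Bigl(\pairWhite{p}{K_2\bigl(\pairWhite{H_1(p)}{q}\bigr)}\Bigr).
\]
Both $H$ and $K$ are computable, because computable partial maps on Baire space are closed under composition and pairing. Now let $G\vdash h$ be any realizer. Then the map $G':=r\mapsto K_2(\pairWhite{r}{G(H_2(r))})$ realizes $g$. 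Applying the property of $(H_1,K_1)$ to this particular realizer $G'$ shows that $p\mapsto K_1(\pairWhite{p}{G'(H_1(p))})$ realizes $f$. This map is exactly $p\mapsto K(\pairWhite{p}{G(H(p))})$. The only care needed is that the reduction is quantified over \emph{all} realizers of $h$, and that $G'$ is a legitimate realizer of $g$; this holds by the definition of the reduction. Partiality causes no problem, because realizers are only required to behave on names of points in the domain.

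\textbf{Item (2).} Assume $f\le_{\mathrm W} g$. If $\mathrm{rank}_{\mathrm W}(g)=\infty$, there is nothing to prove. Otherwise let $k:=\mathrm{rank}_{\mathrm W}(g)$, so $g\le_{\mathrm W}\Limi{k}$. Transitivity then gives $f\le_{\mathrm W}\Limi{k}$. Hence $k\in S_f$, and therefore $\mathrm{rank}_{\mathrm W}(f)\le k$.

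\textbf{Item (1).} If $f\equiv_{\mathrm W} g$, then $f\le_{\mathrm W} g$ and $g\le_{\mathrm W} f$. Applying (2) in both directions gives $\mathrm{rank}_{\mathrm W}(f)\le\mathrm{rank}_{\mathrm W}(g)$ and $\mathrm{rank}_{\mathrm W}(g)\le\mathrm{rank}_{\mathrm W}(f)$. These two inequalities give equality, so the rank descends to Weihrauch degrees.
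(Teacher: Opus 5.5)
Your proof is correct and follows essentially the same route as the paper: well-definedness comes from the well-ordering of $\N_0$, and monotonicity comes from transitivity of $\le_{\mathrm W}$ applied to the sets $S_f, S_g$. The differences are cosmetic: you verify transitivity explicitly by composing the reduction witnesses (the paper simply invokes it), and you derive (1) from (2) by antisymmetry instead of showing $S_f=S_g$ directly.
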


\begin{proof}
\textbf{Well-definedness:} Let $S_f:=\{k\in\N_0:\ f\le_{\mathrm{W}}\Limi{k}\}$.
If $S_f=\emptyset$, define $\mathrm{rank}_{\mathrm{W}}(f):=\infty$.
If $S_f\neq\emptyset$, then $S_f\subseteq\N_0$ has a least element by well-ordering of $\N_0$; define $\mathrm{rank}_{\mathrm{W}}(f)$ to be this minimum.

\textbf{(1):} If $f\equiv_{\mathrm{W}} g$, then $f\le_{\mathrm{W}}g$ and $g\le_{\mathrm{W}}f$.
Hence $S_g\subseteq S_f$ and $S_f\subseteq S_g$, so $S_f=S_g$ and the minima (or emptiness) coincide.

\textbf{(2):} If $f\le_{\mathrm{W}}g$ and $k\in S_g$, then $g\le_{\mathrm{W}}\Limi{k}$.
By transitivity of $\le_{\mathrm{W}}$, $f\le_{\mathrm{W}}\Limi{k}$, so $k\in S_f$.
Thus $S_g\subseteq S_f$, hence $\min S_f \le \min S_g$ in the nonempty case, and the $\infty$ case is immediate.
\end{proof}

\begin{remark}[This is the \enquote{rank along the iterated-limit chain}]\label{rem:rank-chain}
The map $\mathrm{rank}_{\mathrm{W}}$ is a canonical order-preserving map from the Weihrauch preorder to the chain $(\N_0\cup\{\infty\},\le)$ induced by the increasing family
$\Limi{0}\le_{\mathrm{W}}\Limi{1}\le_{\mathrm{W}}\Limi{2}\le_{\mathrm{W}}\cdots$.
No further SCI-specific structure is needed for \cref{thm:rank-wellposed}.
\end{remark}

\subsection{Connecting \texorpdfstring{$\mathrm{rank}_{\mathrm{W}}$}{rankW} Back To SCI Towers (Type-2 SCI)}

\begin{definition}[Type-2 SCI for a represented-space problem]\label{def:type2sci}
Let $f:\subseteq X\rightrightarrows Y$ be a (multi-valued) function between represented spaces.
Define the \textit{Type-2 SCI} of $f$ by
\[
\mathrm{SCI}_{\mathrm{TTE}}(f):=\mathrm{rank}_{\mathrm{W}}(f).
\]
\end{definition}

\begin{remark}[Why this is the correct \enquote{effective} SCI notion]\label{rem:np-remark}
In the Type-2 setting, the fundamental discontinuity operator is $\Lim$.
The equality $\mathrm{SCI}_{\mathrm{TTE}}(f)\le k \iff f\le_{\mathrm{W}}\Limi{k}$
is exactly the content of \cref{def:type2sci}; it is commonly recorded as a theorem/observation in the literature, see e.g., \cite[Observation 43]{neumann2018topological} but the \textit{logical well-posedness} is already captured by \cref{thm:rank-wellposed}.
\end{remark}

\begin{definition}[Weihrauch-rank of a countable-$\Lambda$ SCI problem]\label{def:rank-of-SCIproblem}
Let $\mathcal{P}=\bigl( \Xi,\Omega,(\mathcal{M},d),\Lambda \bigr)$ be an SCI computational problem in the sense of \cref{def:comp-prob}-\cref{def:sci}
with \textit{countable} $\Lambda$ in the sense of \cref{def:countable-Lambda}.
Assume additionally that $\mathcal{M}$ is equipped with a fixed representation $\delta_{\mathcal{M}}$
(e.g.\ the Cauchy representation induced by $(\mathcal{M},d)$ if $\mathcal{M}$ is separable and complete).

Let $\widehat{\Xi}:\mathcal{I}_\Lambda\to\mathcal{M}$ be the factor map from \cref{lem:factor},
viewed as a (single-valued) function between represented spaces
$(\mathcal{I}_\Lambda,\delta_{\mathcal{I}_\Lambda})$ and $(\mathcal{M},\delta_{\mathcal{M}})$.

Define the \textit{Weihrauch-SCI rank} of the SCI problem $\mathcal{P}$ by
\[
\mathrm{rank}_{\mathrm{W}}(\mathcal{P})
:=\mathrm{rank}_{\mathrm{W}}(\widehat{\Xi}).
\]
\end{definition}

\begin{theorem}[Existence / well-posedness of $\mathrm{rank}_{\mathrm{W}}(\mathcal{P})$]\label{thm:rank-P}
Let $\mathcal{P}$ satisfy the hypotheses of \cref{def:rank-of-SCIproblem}.
Then $\mathrm{rank}_{\mathrm{W}}(\mathcal{P})$ exists and is independent of
\begin{enumerate}
\item the chosen enumeration of the countable set $\Lambda$, \textit{up to computable permutations
of $\N$} (i.e.\ up to computable reindexings of coordinates);
\item the choice of standard \textit{Cauchy} representation of $\C$
(with respect to the usual metric and a computable dense sequence);
\item the choice of representation of $\mathcal{M}$ \textit{within its computable isomorphism class}
(i.e.\ replacing $\delta_\mathcal{M}$ by a computably equivalent representation).
\end{enumerate}
\end{theorem}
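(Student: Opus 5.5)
Existence is immediate. By \cref{lem:factor}, $\widehat{\Xi}$ is a well-defined single-valued map $\mathcal{I}_\Lambda\to\mathcal{M}$. With the fixed representations $\delta_{\mathcal{I}_\Lambda}$ and $\delta_{\mathcal{M}}$ it is a function between represented spaces. \Cref{thm:rank-wellposed} then gives a well-defined value $\mathrm{rank}_{\mathrm{W}}(\widehat{\Xi})\in\N_0\cup\{\infty\}$.

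The invariance claims reduce to one principle. If $\widehat{\Xi}'$ denotes the factor map obtained from the alternative choices, then it suffices to show $\widehat{\Xi}\equiv_{\mathrm{W}}\widehat{\Xi}'$, and \cref{thm:rank-wellposed}(1) finishes. I will prove this via a general lemma. Let $\phi:X\to X'$ and $\psi:Y\to Y'$ be computable isomorphisms of represented spaces (computable bijections with computable inverses). If $g'=\psi\circ g\circ\phi^{-1}$, then $g\equiv_{\mathrm{W}}g'$. In fact even $g\equiv_{\mathrm{sW}}g'$ holds, without touching any oracle: $g'$ is realized by composing realizers. For $g\le_{\mathrm{W}}g'$, take $H$ to be a realizer of $\phi$ and $K(\langle p,q\rangle)$ to be a realizer of $\psi^{-1}$ applied to $q$. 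Symmetrically for the converse. This lemma is routine.

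It remains to exhibit the computable isomorphisms in each case.

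\textbf{Item (1).} Let $\pi$ be a computable permutation. By \cref{rem:enum-indep}, $P_\pi$ is a computable homeomorphism of $\C^\N$ with computable inverse $P_{\pi^{-1}}$, where $\pi^{-1}$ is computable since $\pi$ is a computable bijection of $\N$. Writing $\mathrm{Ev}'=P_\pi\circ\mathrm{Ev}_\Lambda$, the map $P_\pi$ restricts to a bijection $\mathcal{I}_\Lambda\to\mathcal{I}'_\Lambda$. Its realizer works on names of the subspace representations, because those are just product names lying in the image. Uniqueness in \cref{lem:factor} gives $\widehat{\Xi}'=\widehat{\Xi}\circ P_\pi^{-1}$, and the lemma applies with $\psi=\id$.

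\textbf{Item (2).} Two Cauchy representations $\delta_\C,\delta'_\C$ are computably equivalent, as stated in \cref{def:rep-C-product}. Applying the translation coordinatewise (uniformly in $n$, through the fixed product coding) gives that $\id:\C^\N\to\C^\N$ is a computable isomorphism between the two product representations. It therefore restricts to $\mathcal{I}_\Lambda$ with the induced subspace representations, and the lemma applies with $\phi=\id$, $\psi=\id$.

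\textbf{Item (3).} Take $\phi=\id$ and $\psi=\id_\mathcal{M}$ viewed as a computable isomorphism $(\mathcal{M},\delta_\mathcal{M})\to(\mathcal{M},\delta'_\mathcal{M})$.

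The only step needing care is that restriction to the subspace $\mathcal{I}_\Lambda$ preserves computability of both directions. It does: a realizer defined on $\dom(\delta_{\C^\N})$ maps names of points of $\mathcal{I}_\Lambda$ to names of their images, which lie in $\mathcal{I}'_\Lambda$. Item (1) also needs a careful statement that $\pi^{-1}$ is computable. Beyond these two points I expect no real obstacle.
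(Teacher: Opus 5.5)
Your proposal is correct and follows the paper's route. Existence comes from \cref{thm:rank-wellposed} applied to $\widehat{\Xi}$. Invariance holds because each change of enumeration or representation is a computable isomorphism (via \cref{rem:enum-indep} for item (1)), which leaves the Weihrauch degree of $\widehat{\Xi}$, and hence its rank, unchanged. The paper's proof states this tersely; you have spelled out the conjugation lemma and the subspace-restriction check that it leaves implicit.
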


\begin{proof}
Existence is immediate from \cref{thm:rank-wellposed} applied to the represented-space map $\widehat{\Xi}$.
Independence under computable reindexings follows from \cref{rem:enum-indep} and invariance
of $\mathrm{rank}_{\mathrm{W}}$ under computable isomorphisms.
Independence of standard admissible representations can be proven similarly: changing to a computably isomorphic representation
does not change the Weihrauch degree of $\widehat{\Xi}$, hence not its rank.
\end{proof}

\begin{remark}[Scope and limitations]\label{rem:scope}
\Cref{thm:rank-wellposed} and \cref{thm:rank-P} give a fully precise, foundational \textit{existence and well-posedness} theory for an SCI-rank compatible with Weihrauch theory. But one has to be careful: This is an \textit{effective} refinement of SCI.
The original SCI definitions (cf. \cref{def:comp-prob}-\cref{def:sci}) are not, by themselves,
statements about computability in the Type-2 sense; they must be enriched with representations and an admissible class of pre-/post-processing operations to connect to Weihrauch degrees.
\end{remark}

\begin{remark}[Classical status of $\mathrm{rank}_{\mathrm W}$ and $\mathrm{SCI}_{\mathrm{TTE}}$]
In Weihrauch theory, the limit operator $\lim$ is a central benchmark degree.
It characterizes limit computability: $f \le_{\mathrm W} \lim \iff f$ is limit computable.
Moreover, reducibility to iterates of $\lim$ corresponds to effective Borel measurability levels,
and $\lim$ enjoys strong algebraic properties (cylinder, strong parallelizability, etc.).
\end{remark}

\begin{remark}[Relation to SCI type $A$ and type $G$ in the SCI literature]
In the SCI framework, type $A$ towers impose effectivity restrictions at the base level,
whereas type $G$ towers are (typically) purely information-based and allow arbitrary
post-processing of finitely many oracle values without measurability/continuity constraints.

Therefore, $\mathrm{SCI}_G$ does \textit{not} automatically coincide with $\mathrm{SCI}_{\mathrm{TTE}}$.
For type $A$, one often expects compatibility with $\mathrm{SCI}_{\mathrm{TTE}}$ in analytic settings,
but this requires that the evaluation device provides Type-2 names/approximations of oracle values
and that the base computations avoid non-computable primitives (e.g.\ exact real comparisons).
\end{remark}

\begin{definition}[Cylinder (strong form)]
For a problem $g:\subseteq Z \rightrightarrows W$, let
\[
\mathrm{id}\times g:\subseteq X\times Z \rightrightarrows X\times W,
\qquad
(\mathrm{id}\times g)(x,z):=\{(x,w): w\in g(z)\}.
\]
We call $g$ a (strong) cylinder if
\[
\mathrm{id}\times g \leq_{sW} g.
\]
\end{definition}

\begin{definition}[Transparency (operational form on Baire)]\label{def:transparent}
A problem $g$ is \textit{transparent} if for every computable functional
$T:\subseteq\NN^\NN \to\NN^\NN$ there exists a computable functional
$S:\subseteq\NN^\NN \to\NN^\NN$ such that
\[
g \circ S \preceq T \circ g .
\]
\end{definition}

The facts that \(\Lim\) is a transparent cylinder and that iterated limits inherit this structure are standard in Weihrauch theory; see, for example,
\cite[Section~6, Theorem~6.5]{brattka2021weihrauch}.

\begin{lemma}[$\Limi{n}$ is a transparent cylinder]{\label{lem:limntranspcyl}}
For every $n\in\N_0$, the problem $\Limi{n}$ is a transparent cylinder.
\end{lemma}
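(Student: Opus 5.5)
The plan is to argue by induction on $n$, establishing the cylinder property and transparency separately, and relying on the standard facts for $\Lim$ cited just before the statement (\cite[Section~6, Theorem~6.5]{brattka2021weihrauch}). For $n=0$, $\Limi{0}=\id_{\NN^\NN}$. It is a cylinder because $\id\times\id$ is computably equivalent to $\id$ via the pairing $\langle p,q\rangle$: take $H(\langle p,q\rangle):=\langle p,q\rangle$ and $K:=\id$, so that $\id\times\id\le_{\mathrm{sW}}\id$. It is transparent because, given a computable $T$, we may take $S:=T$; then $\id\circ T=T\circ\id$, which is in particular a tightening.

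For the inductive step, assume $\Limi{n}$ is a transparent cylinder and write $\Limi{n+1}=\Lim\circ\Limi{n}$.

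\emph{Transparency.} Fix a computable $T$. By transparency of $\Lim$ there is a computable $S_1$ with $\Lim\circ S_1\preceq T\circ\Lim$. By the induction hypothesis applied to $S_1$, there is a computable $S$ with $\Limi{n}\circ S\preceq S_1\circ\Limi{n}$. Composing these gives
\[
\Lim\circ\Limi{n}\circ S\preceq \Lim\circ S_1\circ\Limi{n}\preceq T\circ\Lim\circ\Limi{n}.
\]
Here one has to verify that tightenings compose: $f\preceq g$ implies $h\circ f\preceq h\circ g$ and $f\circ h\preceq g\circ h$ for single-valued partial $h$, and $\preceq$ is transitive. Domain inclusions have to be checked at each stage, but since every map involved is single-valued and partial, this is routine bookkeeping.

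\emph{Cylinder property.} We need $\id\times\Limi{n+1}\le_{\mathrm{sW}}\Limi{n+1}$. Given $\langle p,q\rangle$, the induction hypothesis provides computable $H_n,K_n$ with $K_n\Limi{n}H_n\langle p,q\rangle=\langle p,\Limi{n}(q)\rangle$. Pairing with the constant-sequence trick for $\Lim$ gives a computable $H'$ producing a $\Lim$-input whose limit is $\langle p,\Limi{n}(q)\rangle$ after postprocessing. The postprocessor $K_n$ sitting between the two limits is the obstacle, since $\Limi{n+1}$ offers no slot for an intermediate computation. This is exactly where transparency is needed: apply it to $K_n$ at level $n$ after commuting things past $\Lim$. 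Concretely, I would first show $\id\times\Lim\le_{\mathrm{sW}}\Lim$ by $H\langle p,r\rangle$ being the sequence whose $m$-th row is $\langle p,r_m\rangle$. Then I would write
\[
\id\times(\Lim\circ\Limi{n})=(\id\times\Lim)\circ(\id\times\Limi{n})
\]
and reduce each factor strongly. The composite of strong reductions has the form
\[
K\circ\Lim\circ H\circ K_n\circ\Limi{n}\circ H_n .
\]
Transparency of $\Lim$ absorbs $K$ only on the outside, so the inner $H\circ K_n$ is the sticking point. It is removed by transparency of $\Limi{n}$: we get a computable $S$ with $\Limi{n}\circ S\preceq(H\circ K_n)\circ\Limi{n}$, which yields $K\circ\Limi{n+1}\circ S\circ H_n$.

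The main obstacle I expect is this interleaving step, together with checking that the tightenings preserve realizers when the maps are only partial. If the direct argument becomes messy, the fallback is to cite the general theorem that the composition of transparent cylinders is a transparent cylinder, as recorded in \cite{brattka2021weihrauch}.
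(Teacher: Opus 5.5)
Your proposal is correct and follows the paper's proof. Both argue by induction on $n$, with the identity as base case and the inductive step $\Limi{n+1}=\Lim\circ\Limi{n}$ resting on $\Lim$ being a transparent cylinder and on transparent cylinders being closed under composition. The paper simply cites this closure property, whereas you verify it directly for this instance: you compose the two transparency witnesses, and in the cylinder part you use transparency of the inner factor $\Limi{n}$ to move the interleaved computable map $H\circ K_n$ in front of it. This is a valid, self-contained version of the same step.
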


\begin{proof}
This result is very classical, still we give a short proof here. We argue by induction on $n$.

\textbf{Base case $n=0$:} By definition, $\Limi{0}=\mathrm{id}_{\NN^{\NN}}$.
The identity is a cylinder since $\mathrm{id}\times \mathrm{id}\le_{\mathrm{sW}}\mathrm{id}$
via computable pairing/unpairing. It is transparent because for any computable $h$ we can take
$h':=h$ and obtain $h\circ \mathrm{id}=\mathrm{id}\circ h'$.

\textbf{Inductive step:} Assume $\Limi{n}$ is a transparent cylinder. Since $\Lim$ itself is a
transparent cylinder, and the class of transparent cylinders is closed under composition,
it follows that $\Lim\circ \Limi{n}=\Limi{n+1}$ is a transparent cylinder.

Thus $\Limi{n}$ is a transparent cylinder for all $n\in\NN_0$.
\end{proof}

\begin{lemma}[Normal form for reductions to a single-valued transparent cylinder]{\label{lem:TowerNormalForm}}
Let $g$ be a \textit{single-valued} transparent cylinder on Baire space, and let
\[
        f:\subseteq\mathbb N^{\mathbb N}\rightrightarrows\mathbb N^{\mathbb N}
\]
be a Baire-space problem. Then
\[
f \leq_{\mathrm W} g
\quad\Longleftrightarrow\quad
\exists\text{ computable }K:\subseteq\mathbb{N}^\mathbb{N}\to\mathbb{N}^\mathbb{N}
\text{ such that } g \circ K \preceq f.
\]
\end{lemma}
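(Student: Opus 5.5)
The plan is to prove the two directions separately. The backward direction is a direct construction. The forward direction uses the cylinder property to absorb the input name into the oracle call, and then transparency to push all post-processing in front of $g$. Throughout I read $g\circ K\preceq f$ as: for every $p\in\dom(f)$ we have $K(p)\in\dom(g)$ and $g(K(p))\in f(p)$.

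\textbf{($\Leftarrow$).} Suppose $K$ is computable with $g\circ K\preceq f$. Take $H:=K$ as pre-processor, and as post-processor the computable projection $\langle p,q\rangle\mapsto q$. Since $g$ is single-valued on Baire space, every realizer $G\vdash g$ agrees with $g$ on $\dom(g)$. Hence for $p\in\dom(f)$ the reduction outputs $G(K(p))=g(K(p))\in f(p)$, so $f\le_{\mathrm W}g$.

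\textbf{($\Rightarrow$).} Let $H,K_0$ witness $f\le_{\mathrm W}g$. Again using single-valuedness and taking $G:=g$, this gives
\[
K_0\bigl(\langle p,g(H(p))\rangle\bigr)\in f(p)\quad\text{for }p\in\dom(f).
\]
I then carry out three steps.
\begin{enumerate}
\item By the cylinder property ($\mathrm{id}\times g\le_{\mathrm{sW}}g$), fix computable $H',K'$ with $K'\bigl(g(H'\langle p,q\rangle)\bigr)=\langle p,g(q)\rangle$ for all $q\in\dom(g)$.
\item Combining this with the reduction, every $p\in\dom(f)$ satisfies
\[
(K_0\circ K')\bigl(g(H'\langle p,H(p)\rangle)\bigr)\in f(p).
\]
\item Apply transparency to the computable functional $T:=K_0\circ K'$. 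This yields a computable $S$ with $g\circ S\preceq T\circ g$. Concretely, for every $r\in\dom(T\circ g)$ we get $S(r)\in\dom(g)$ and $g(S(r))=T(g(r))$, using single-valuedness once more. Now define $K(p):=S(H'\langle p,H(p)\rangle)$. This is computable, and $g(K(p))=T\bigl(g(H'\langle p,H(p)\rangle)\bigr)\in f(p)$.
\end{enumerate}

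The main obstacle is the bookkeeping of domains and of realizers versus $g$ itself. I expect this to be the only delicate part; the algebra above is straightforward once the witnesses are fixed.
\begin{itemize}
\item For $p\in\dom(f)$, the point $r=H'\langle p,H(p)\rangle$ must lie in $\dom(T\circ g)$. This holds because the reduction guarantees $H(p)\in\dom(g)$, the strong cylinder reduction then guarantees $r\in\dom(g)$, and $T$ is defined at $g(r)$ since $T(g(r))$ is exactly the output of the reduction at $p$.
\item Single-valuedness is what lets me replace an arbitrary realizer $G$ by $g$ in both directions. It is also what lets me read the tightening from transparency as an equality $g(S(r))=T(g(r))$. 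Without it, the tightening would only give a subset relation, and the statement could fail.
\end{itemize}
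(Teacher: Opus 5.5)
Your proof is correct and follows the paper's route. You first convert the reduction into a strong one $T\circ g\circ H_0\preceq f$ via the cylinder property, with $H_0(p)=H'\langle p,H(p)\rangle$ and $T=K_0\circ K'$; you then apply transparency and set $K=S\circ H_0$, spelling out the cylinder-absorption step that the paper only cites and using the projection $\langle p,q\rangle\mapsto q$ as the precise form of the paper's \enquote{identity post-processor} in the backward direction.
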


\begin{proof}
\textbf{($\Rightarrow$):}
Assume $f \leq_W g$. Since \(g\) is a cylinder, ordinary Weihrauch reducibility to \(g\) can be converted
to strong Weihrauch reducibility by the standard cylinder absorption argument; see \cite{brattka2021weihrauch}. Hence we may assume that the reduction is strong. Hence there exist
computable functionals $H_0,T$ such that
\[
T \circ g \circ H_0 \preceq f .
\]
By transparency of $g$, there exists a computable functional $S$ such that
\[
g \circ S \preceq T \circ g .
\]
Therefore
\[
g \circ S \circ H_0 \preceq T \circ g \circ H_0 \preceq f ,
\]
so we can take $K := S \circ H_0$.

\textbf{($\Leftarrow$):}
Suppose $g \circ K \preceq f$ for some computable $K$. Then
\[
\dom(f) \subseteq \dom(g \circ K)
\quad\text{and}\quad
(g \circ K)(x) \subseteq f(x)
\]
for all $x \in \dom(f)$. Hence the reduction is witnessed by the pre-processor $K$ and
the identity post-processor.
\end{proof}

We now state the corresponding name-level normal form. The next definition makes the typing precise: a represented-space problem is converted into an ordinary Baire-space
problem on names.

\begin{definition}[Name-level problem associated with a represented-space problem]\label{def:name-level-problem}
Let
\[
        f:\subseteq X\rightrightarrows Y
\]
be a problem between represented spaces
\[
        X=(|X|,\delta_X),\qquad Y=(|Y|,\delta_Y).
\]
The associated name-level problem is the multivalued map
\[
        \mathcal N_f:\subseteq\mathbb N^{\mathbb N}\rightrightarrows
        \mathbb N^{\mathbb N}
\]
defined by
\[
        \operatorname{dom}(\mathcal N_f) :=
        \{p\in\operatorname{dom}(\delta_X):
        \delta_X(p)\in\operatorname{dom}(f)\},
\]
and
\[
        \mathcal N_f(p) :=
        \{q\in\operatorname{dom}(\delta_Y):
        \delta_Y(q)\in f(\delta_X(p))\}.
\]
If \(f\) is single-valued, this means
\[
        \mathcal N_f(p)=
        \delta_Y^{-1}\bigl(\{f(\delta_X(p))\}\bigr).
\]
Whenever a formula of the form
\[
        \Limi{k} \circ K\preceq f
\]
appears with \(f\) a represented-space problem, it means
\[
        \Limi{k} \circ K\preceq \mathcal N_f
\]
as a statement between Baire-space problems.
\end{definition}

\begin{definition}[TTE tower of height $n$ for a problem]
Let $f:\subseteq X\rightrightarrows Y$ be a problem between represented spaces.
We say that $f$ has a \textit{TTE tower of height $n$} if
\[
f\ \le_{\mathrm W}\ \Limi{n}.
\]
\end{definition}

\begin{theorem}[Pure-tower normal form]\label{thm:pure-tower-normal-form}
Let $f$ be any problem between represented spaces. Then for every $n\in\N_0$
\[
f\le_{\mathrm W}\Limi{n}
\quad\Longleftrightarrow\quad
\exists\ \text{computable }K:\subseteq\Baire\to\Baire\ \text{ such that } \Limi{n} \circ K \preceq \mathcal N_f.
\]
In particular, the definition $\mathrm{SCI}_{\mathrm{TTE}}(f)=\mathrm{rank}_{\mathrm W}(f)$ coincides with the least height of a \enquote{computable $n$-limit normal form} for $f$.
\end{theorem}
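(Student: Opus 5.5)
The plan is to reduce the statement to \cref{lem:TowerNormalForm} with $g:=\Limi{n}$, after passing from the represented-space problem $f:\subseteq X\rightrightarrows Y$ to its name-level problem $\mathcal N_f$ from \cref{def:name-level-problem}.

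The first step is to show that $f\le_{\mathrm W}\Limi{n}$ holds iff $\mathcal N_f\le_{\mathrm W}\Limi{n}$, where $\Limi{n}$ is regarded as a problem on Baire space carrying the identity representation. This follows by unfolding \cref{def:weihrauch}. A pair of computable $H,K$ witnesses $f\le_{\mathrm W}\Limi{n}$ iff, for every realizer $G\vdash\Limi{n}$, the map $p\mapsto K\langle p,G(H(p))\rangle$ sends each $p\in\dom(\delta_X)$ with $\delta_X(p)\in\dom(f)$ to some $q$ with $\delta_Y(q)\in f(\delta_X(p))$. By the definition of $\mathcal N_f$, this says exactly that $q\in\mathcal N_f(p)$ on all of $\dom(\mathcal N_f)$, i.e.\ that $p\mapsto K\langle p,G(H(p))\rangle$ realizes $\mathcal N_f$ under the identity representation. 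So the same pair $H,K$ witnesses both reductions, in both directions.

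The second step uses \cref{lem:limntranspcyl}, which states that $\Limi{n}$ is a transparent cylinder. It is also single-valued, since $\Lim$ is a partial function and compositions of partial functions are partial functions. Hence \cref{lem:TowerNormalForm} applies with $g=\Limi{n}$ and the Baire-space problem $\mathcal N_f$, which gives $\mathcal N_f\le_{\mathrm W}\Limi{n}$ iff there is a computable $K$ with $\Limi{n}\circ K\preceq\mathcal N_f$. Combined with the first step, this is the claimed equivalence, read according to the convention at the end of \cref{def:name-level-problem}.

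For the \enquote{in particular} clause, I take $\mathrm{SCI}_{\mathrm{TTE}}(f)=\mathrm{rank}_{\mathrm W}(f)$, which is the least $n$ with $f\le_{\mathrm W}\Limi{n}$. By the equivalence just proved, this is the least $n$ for which a computable $n$-limit normal form exists, with both sides equal to $\infty$ when no such $n$ exists.

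The main obstacle is the forward direction of \cref{lem:TowerNormalForm}, namely converting an ordinary Weihrauch reduction into a strong one by cylinder absorption and then pushing the post-processor inside via transparency. That lemma is assumed, so within this proof the only care needed is the bookkeeping in the first step: the domains must match, so that $\dom(\mathcal N_f)$ consists exactly of the names of points in $\dom(f)$, and the inclusion in $\preceq$ must be read on names.
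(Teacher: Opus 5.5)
Your proposal is correct and follows the paper's proof: both directions come from \cref{lem:TowerNormalForm} applied to $g=\Limi{n}$, which is justified by \cref{lem:limntranspcyl} together with the single-valuedness of $\Limi{n}$. Your extra first step, showing $f\le_{\mathrm W}\Limi{n}$ iff $\mathcal N_f\le_{\mathrm W}\Limi{n}$ by unfolding the realizer condition, makes explicit a typing step that the paper leaves to the convention at the end of \cref{def:name-level-problem}; the paper also proves the backward direction directly, using $K$ as pre-processor and the identity post-processor.
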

\begin{proof}
\textbf{($\Rightarrow$):} By \cref{lem:limntranspcyl}, $\Limi{n}$ is a single-valued
transparent cylinder. Hence \cref{lem:TowerNormalForm} yields a computable $K$ such that
\[
\Limi{n} \circ K \preceq \mathcal N_f .
\]

\textbf{($\Leftarrow$):} If $\Limi{n} \circ K \preceq \mathcal N_f$ for some computable $K$, then
$f \leq_W \Limi{n}$ is witnessed by taking the Weihrauch pre-processor to be $K$
and the post-processor to be the identity.
\end{proof}

The preceding normal form is the first place where the distinction between raw SCI and
implemented SCI becomes visible. A raw tower may list its approximants separately; a
Weihrauch reduction must instead provide one name-level procedure producing the whole
input to the iterated limit operator. The next section shows that without such regularity
or uniformity assumptions, raw type-\(G\) SCI has very little Weihrauch content.

\begin{remark}[Uniformity of the whole approximation table]
The normal form in \cref{thm:pure-tower-normal-form} is stronger than merely
having, for each multi-index separately, a computable approximant. It requires a single
computable functional
\[
        K:\subseteq\mathbb N^{\mathbb N}\to\mathbb N^{\mathbb N}
\]
which, from one input name, produces the entire name to which the iterated limit
operator is applied. This is precisely the uniformity missing from non-uniform
deepest-level towers, as shown in \cref{prop:weak-Hansen-no-K}.
\end{remark}

\section{G-Case}{\label{sec:G-Case}}
\subsection{Type-G SCI (Unrestricted Post-Processing)}
We shortly recall the $G$-case definition of the SCI.

\begin{definition}[Raw type-G towers and \(\SCIG\)]\label{def:scig}
Let
\[
        \mathcal P=(\Xi,\Omega,(\mathcal M,d),\Lambda)
\]
be an SCI computational problem. A raw type-G tower is a standard tower in the sense
of \cref{def:standard-tower}; equivalently, only the deepest-level maps are
required to be general algorithms, and no computability, continuity, Borel, or uniform
implementation requirement is imposed on the indexed family. Define \(\SCIG(\mathcal P)\) as
in \cref{def:sci}.
\end{definition}

\begin{remark}[What type-G does not assume]\label{rem:typeg-not}
Raw type-\(G\) imposes no computability, continuity, Borel-measurability, or uniform implementation requirement on finite-transcript post-processing at the deepest level. This is not a defect in the definition; it is the point of the unrestricted information model. The consequences of this choice are made explicit in \cref{thm:finite-factor-scig0}.
\end{remark}

\subsection{Purely Logical Consequences Of The Type-G Setting}

\begin{definition}[Finite-query factorisation]\label{def:finite-factor}
Let $\mathcal{P}=(\Xi,\Omega,(\mathcal{M},d),\Lambda)$.
We say that $\Xi$ admits a \textit{finite-query factorisation through $\Lambda$} if there exist
$f_1,\dots,f_m\in\Lambda$ and a map $G: \operatorname{im}(f_1,\dots,f_m)\to \mathcal{M}$ such that
\[
\Xi(A)=G\bigl(f_1(A),\dots,f_m(A)\bigr)\qquad\forall A\in\Omega,
\]
where $\operatorname{im}(f_1,\dots,f_m)\subseteq \C^m$ is the range of $A\mapsto (f_1(A),\dots,f_m(A))$.
\end{definition}

\begin{theorem}[Finite-query factorisations force $\mathrm{SCI}_G=0$]\label{thm:finite-factor-scig0}
Let $\mathcal{P}=\bigl( \Xi,\Omega,(\mathcal{M},d),\Lambda \bigr)$ be an SCI computational problem. Suppose there exist
$f_1,\dots,f_m\in\Lambda$ and a function $G:\mathbb{C}^m\to \mathcal{M}$ such that
\[
\Xi(A)=G\bigl(f_1(A),\dots,f_m(A)\bigr)\quad\text{for all }A\in\Omega,
\]
i.e. $\Xi$ admits a finite-query factorisation through $\Lambda$.
Then $\mathrm{SCI}_G(\mathcal{P})=0$.
\end{theorem}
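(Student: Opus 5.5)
The plan is to exhibit directly a height-$0$ standard tower, i.e.\ a single general algorithm $(\Gamma,\Lambda_\Gamma)$ in the sense of \cref{def:gen-alg} with $\Gamma=\Xi$, and then invoke \cref{def:sci}(S1). Since $\SCIG(\mathcal P)\in\N_0\cup\{\infty\}$ is a minimum over heights, exhibiting a height-$0$ tower immediately gives $\SCIG(\mathcal P)=0$.

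\textbf{Construction.} Take the constant query policy $\Lambda_\Gamma(A):=Q:=\{f_1,\dots,f_m\}$ for all $A\in\Omega$. Define $\Gamma(A):=G(f_1(A),\dots,f_m(A))$, which equals $\Xi(A)$ by hypothesis. The query set is finite. It is nonempty provided $m\ge1$.

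\textbf{Degenerate case $m=0$.} Here $\Xi$ is constant. Since \cref{def:gen-alg} demands a nonempty query set, I would then pick any $f\in\Lambda$ and set $Q=\{f\}$. This requires $\Lambda\neq\emptyset$. If $\Lambda=\emptyset$, the consistency axiom \eqref{eq:consistency} already forces $\Xi$ to be constant, but there is no admissible nonempty query set at all. So I expect to note that the theorem implicitly assumes $m\ge1$, or equivalently $\Lambda\neq\emptyset$; this is the only delicate point. Padding the query set with an arbitrary extra element of $\Lambda$ is harmless, since it only enlarges the information.

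\textbf{Verification of the two axioms.} Suppose $f(B)=f(A)$ for all $f\in\Lambda_\Gamma(A)=Q$.
\begin{itemize}
\item Stability of the query set, \cref{def:gen-alg}(2), holds trivially, since $\Lambda_\Gamma$ is constant. In particular the algorithm is fixed-query in the sense of \cref{def:query-policy}.
\item Finite-information dependence, \cref{def:gen-alg}(1), holds because $(f_1(B),\dots,f_m(B))=(f_1(A),\dots,f_m(A))$, so applying the same function $G$ gives $\Gamma(B)=\Gamma(A)$.
\end{itemize}

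\textbf{Conclusion.} No regularity of $G$ is needed, since raw type-G imposes none (\cref{rem:typeg-not}). Thus $\Gamma$ is a standard tower of height $0$ with $\Gamma(A)=\Xi(A)$ for all $A\in\Omega$, and hence $\SCIG(\mathcal P)=0$.
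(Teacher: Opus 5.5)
Your proposal is correct and follows the paper's proof essentially verbatim: a constant query policy $\{f_1,\dots,f_m\}$, $\Gamma:=G\circ(f_1,\dots,f_m)$, and a check of finite-information dependence and query-set stability give a height-$0$ tower. Your remark on the degenerate case $m=0$ is a point the paper leaves implicit: the nonemptiness requirement in \cref{def:gen-alg} forces $\Lambda\neq\emptyset$, and your reading that $m\ge1$ is tacitly assumed is the right one.
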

\begin{proof}
Define $\Gamma:\Omega\to \mathcal{M}$ by $\Gamma(A):=G(f_1(A),\dots,f_m(A))$ and define the query policy
$\Lambda_\Gamma(A):=\{f_1,\dots,f_m\}$ for all $A\in\Omega$.

Then $(\Gamma,\Lambda_\Gamma)$ is a general algorithm: if $B$ agrees with $A$ on all queries in
$\Lambda_\Gamma(A)$, then $f_i(B)=f_i(A)$ for all $i$, hence $\Gamma(B)=\Gamma(A)$, and clearly also
$\Lambda_\Gamma(B)=\Lambda_\Gamma(A)$ because the query set is constant.

Moreover $\Gamma=\Xi$ by construction, so $\Xi$ is computed by a height-$0$ tower (i.e.\ a single
general algorithm). Therefore $\mathrm{SCI}_G(\mathcal{P})=0$.
\end{proof}

\begin{remark}[Core logical point]\label{rem:core-logical}
\Cref{thm:finite-factor-scig0} is \textit{pure logic}: once base post-processing is unrestricted,
\textit{any} map $G$ on the finite transcript is allowed.
This is the reason why descriptive-set/Type-2 lower-bound methods do not automatically imply type-G lower bounds.
\end{remark}

\subsection{Monotonicity In Tower Type: G Vs. A}

\begin{theorem}[Monotonicity in tower type]\label{thm:monotone-G-vs-BSS-A}
For every SCI computational problem $\mathcal P$ for which $\SCIAbss(\mathcal P)$
is defined, we have
\[
\mathrm{SCI}_{G}(\mathcal P)\ \le\ \SCIAbss(\mathcal P).
\]
\end{theorem}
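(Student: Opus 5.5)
The plan is to show that every uniformly BSS-arithmetic tower of height $k$ is already a standard raw type-G tower of height $k$. Taking minima over heights then gives the inequality. If $\SCIAbss(\mathcal P)=\infty$ there is nothing to prove, so fix a BSS-arithmetic tower of height $k=\SCIAbss(\mathcal P)$, given by a single parameter-free oracle BSS machine $\mathbb M$. For each multi-index $(n_k,\ldots,n_1)$ and each $A\in\Omega$ the machine halts, and the deepest-level maps $\Gamma_{n_k,\ldots,n_1}(A)=\delta^{\mathrm{fin}}_{\mathcal M}(u_{n_k,\ldots,n_1}(A))$ satisfy the iterated-limit condition of \cref{def:standard-tower}. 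Since that limit condition is identical in both definitions, the only thing to verify is that each $\Gamma_{n_k,\ldots,n_1}$, together with a suitable query policy, is a general algorithm in the sense of \cref{def:gen-alg}.

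Fix a multi-index and $A\in\Omega$. The query policy will be
\[
\Lambda_{\Gamma}(A):=\{f_i : \text{the index } i \text{ is queried during the halting run of } \mathbb M \text{ on indices }(n_k,\ldots,n_1)\text{ with oracle } \mathcal O_A\}.
\]
This set is finite because the run halts after finitely many steps. If it is empty, we add the fixed element $f_1$ so that the policy is nonempty, as the definition requires; this harmless padding does not affect the argument below. The key step is a run-determinism lemma: suppose $B$ satisfies $f(B)=f(A)$ for all $f\in\Lambda_\Gamma(A)$. Then the runs of $\mathbb M$ on oracles $\mathcal O_A$ and $\mathcal O_B$ (same indices) coincide step by step.

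The lemma is proved by induction on the step count. The machine is deterministic and parameter-free, and its configuration after step $t$ depends only on the initial input (the indices) and the oracle answers received so far. At each QUERY instruction the queried index $i$ is the same in both runs by the induction hypothesis. Moreover $f_i\in\Lambda_\Gamma(A)$, so $\mathcal O_A(i)=\mathcal O_B(i)$ (real and imaginary parts agree). Hence both runs halt at the same step with the same output code $u$, and both runs query exactly the same indices. This gives finite-information dependence, $\Gamma(B)=\delta^{\mathrm{fin}}_{\mathcal M}(u)=\Gamma(A)$, and stability of the query set, $\Lambda_\Gamma(B)=\Lambda_\Gamma(A)$; the padding element, if present, was added in both cases.

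The main obstacle, modest as it is, is exactly this stability clause (2) of \cref{def:gen-alg}. It relies on defining the policy as the set of queries actually made, rather than something larger, and on determinism of the BSS run. A subtle point is that the enumeration $\Lambda=(f_i)$ may have repetitions, so distinct indices could name the same function. This is harmless: agreement of $f$ on $\Lambda_\Gamma(A)$ still forces agreement of every queried oracle value. Height $0$ is handled identically with no index input. Therefore every BSS-arithmetic tower of height $k$ is a raw type-G tower of height $k$, and $\SCIG(\mathcal P)\le\SCIAbss(\mathcal P)$.
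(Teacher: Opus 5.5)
Your proof is correct and follows the paper's argument: every uniformly BSS-arithmetic tower is admissible as a raw type-G tower, because each deepest-level map is a general algorithm, and the minimum over heights gives the inequality. Where the paper only says the base map ``depends on only finitely many oracle values and is therefore a general algorithm,'' you make this explicit: you take as query policy the indices actually queried in the halting run, and a run-determinism argument then gives both finite-information dependence and stability of the query set.
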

\begin{proof}
Let $k:=\SCIAbss(\mathcal P)$ and fix a height-$k$ arithmetic tower
witnessing this value. Each base map of this tower is a BSS-arithmetic algorithm
in the sense of \cref{def:BSS-arith-alg}, hence, in particular, it depends on only finitely many oracle
values and is therefore a general algorithm in the sense of \cref{def:gen-alg}.
Thus the same tower is admissible as a type-$G$ tower, witnessing
$\mathrm{SCI}_{G}(\mathcal P)\le k$.
\end{proof}

\subsection{Why Type-G Is Not Captured By Weihrauch/TTE In General}

\begin{lemma}[Finite $\mathrm{rank}_{\mathrm W}$ implies Borel regularity]\label{lem:borel-regularity}
Let $f:\subseteq X\to Y$ be single-valued between represented spaces and assume
$f\le_{\mathrm W}\Limi{n}$ for some $n \in \NN_0$.
Then $f$ is Borel-measurable with respect to the represented-space induced topologies.
In particular, if $Y=\{0,1\}$ is discrete, then $f^{-1}(\{1\})$ is a Borel subset of $X$.
\end{lemma}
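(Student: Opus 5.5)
The plan is to pass to the name level via \cref{thm:pure-tower-normal-form}, show that $\Limi{n}$ is Borel on Baire space, and then push this back to $X$ and $Y$ through the representations. Throughout, ``represented-space induced topology'' on $X$ means the final topology of $\delta_X$. For subspaces like $\mathcal I_\Lambda$ with the restricted product representation, I would use that this agrees with the subspace topology of an admissible representation. In the worst case, I would state the lemma for admissibly represented spaces, which covers all the spaces used later.

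\textbf{Step 1 (name level).} By \cref{thm:pure-tower-normal-form}, there is a computable, hence continuous, $K$ with $\Limi{n}\circ K\preceq\mathcal N_f$. Since $f$ is single-valued, this means that for every $p\in\dom(\mathcal N_f)=\delta_X^{-1}(\dom f)$ we have $\delta_Y(\Limi{n}(K(p)))=f(\delta_X(p))$. So $F:=\Limi{n}\circ K$ realizes $f$ on names.

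\textbf{Step 2 ($\Limi{n}$ is Borel).} I argue by induction on $n$.
\begin{itemize}
\item The domain $\dom(\Lim)$ is $\Pi^0_3$: for every $k$ there are $N$ and $v$ such that for all $m\ge N$ we have $p_m(k)=v$.
\item Each output coordinate $p\mapsto\Lim(p)(k)$ is a pointwise limit of continuous, finite-valued maps.
\item Hence preimages of basic cylinders are $\Sigma^0_2$ relative to $\dom(\Lim)$.
\end{itemize}
Therefore $\Lim$ is $\Sigma^0_2$-measurable on a Borel domain. Compositions of Borel maps on Borel domains are Borel, so $\Limi{n}$ is Borel, in fact $\Sigma^0_{n+1}$-measurable. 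Composing with continuous $K$, the realizer $F$ is Borel-measurable on $\dom(\mathcal N_f)$.

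\textbf{Step 3 (descend to $X$).} Let $U\subseteq Y$ be open. Then $\delta_Y^{-1}(U)$ is open in $\dom\delta_Y$, so $F^{-1}(\delta_Y^{-1}(U))$ is Borel relative to $\dom(\mathcal N_f)$. This set equals $\delta_X^{-1}(f^{-1}(U))$. So at the name level, $f^{-1}(U)$ has a Borel preimage under $\delta_X$.

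This is the main obstacle: turning ``$\delta_X^{-1}(B)$ is Borel'' into ``$B$ is Borel in $X$''. In general this needs admissibility and some control, such as an open, or at least Borel-section-admitting, representation. For the Cauchy/product representations used here, with $X=\mathcal I_\Lambda\subseteq\C^\N$, I would try to fix a continuous section $s:X\to\dom\delta_X$. The Cauchy representation of a separable metric space admits such sections Borel-wise, via fast-converging choices of dense-sequence indices. Then $f^{-1}(U)=s^{-1}(\delta_X^{-1}(f^{-1}(U)))$ is Borel, which proves measurability.

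For $Y=\{0,1\}$ with the discrete topology, apply this to $U=\{1\}$. This gives that $f^{-1}(\{1\})$ is Borel in $X$, yielding the ``in particular'' clause.
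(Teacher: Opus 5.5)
Your argument is correct for the spaces the paper actually needs, but it takes a genuinely different route from the paper's proof.

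\textbf{How the two routes differ.} The paper handles $n=0$ by ``computable $\Rightarrow$ continuous $\Rightarrow$ Borel''. For $n\ge 1$ it simply invokes \cite[Thm.~6.5]{brattka2021weihrauch}, which gives effective $\Sigma^0_{n+1}$-measurability of $f$ directly on $X$. You instead show by hand that $\Limi{n}$ is $\Sigma^0_{n+1}$-measurable on a Borel domain, so $f$ has a Borel realizer. You then pull name-level sets back along a Borel section $s$ of $\delta_X$, obtaining $f^{-1}(U)=s^{-1}(E)\cap\dom(f)$ with $E\subseteq\Baire$ Borel.

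\textbf{What each buys.} The citation gives the sharp effective level on $X$, which your descent through a merely $\Sigma^0_2$-measurable section need not preserve. But it holds only under the hypotheses of the cited theorem (e.g.\ computable metric spaces), as the paper itself notes in \cref{thm:bgp-relativized}. Your route instead isolates exactly the hypothesis that is used, and that hypothesis is not cosmetic. Read literally for arbitrary represented spaces with final topologies, the lemma fails. Take $X=\{a,b\}$ with $\delta_X(p)=a$ iff $p$ is eventually constant. Both fibres are dense, so neither is open and the final topology is indiscrete. Yet $f(a):=1$, $f(b):=0$ satisfies $f\le_{\mathrm W}\Limi{2}$, since eventual constancy is a $\Sigma^0_2$ property of names. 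Meanwhile $f^{-1}(\{1\})=\{a\}$ is not Borel. So your hedging is warranted. For the application in \cref{thm:scig0-but-tteinf} ($X=\C$ with the Cauchy representation), both proofs go through.

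\textbf{Small fixes.}
\begin{enumerate}
\item Drop the word ``continuous'' for the section. Since $\C$ is connected and $\Baire$ is totally disconnected, a continuous section would be constant. The Borel section $s(z)(k):=\min\{i:|Q(i)-z|<2^{-k-1}\}$ is what you actually use, and it suffices.
\item Your fallback to ``admissibly represented spaces'' claims more than the argument shows. What it uses is a Borel section, which is available for countably based admissible representations such as Cauchy, countable products, and subspaces like $\mathcal I_\Lambda$.
\item Step~1 does not need the normal form. The witnesses $H,K$ of $f\le_{\mathrm W}\Limi{n}$ already give the Borel realizer $p\mapsto K(\langle p,\Limi{n}(H(p))\rangle)$.
\end{enumerate}
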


\begin{proof}
If $n=0$, then $\Limi{0}=\mathrm{id}$ and $f\le_{\mathrm W}\mathrm{id}$ implies that $f$ is computable,
hence continuous, hence Borel measurable.

Now assume $n\in\NN$. By the characterization theorem for the $\Lim$-hierarchy (c.f.\ \cite[Thm.~6.5]{brattka2021weihrauch}),
$f\le_{\mathrm W}\Limi{n}$ implies that $f$ is effectively $\Sigma^0_{n+1}$-measurable, and in particular
Borel measurable.
\end{proof}

\begin{theorem}[Type-G can have $\mathrm{SCI}_G=0$ while $\mathrm{SCI}_{\mathrm{TTE}}=\infty$]\label{thm:scig0-but-tteinf}
Assume a standard represented space structure on $X=\C$ (e.g.\ Cauchy representation),
and let $Y=\{0,1\}$ be discrete.
Let $A\subseteq \C$ be a fixed analytic-complete (hence non-Borel) set.
Define the single-valued map
\[
f:\C\to\{0,1\},\qquad f(z):=\mathbf{1}_A(z).
\]
Consider the SCI computational problem
\[
\mathcal{P}=(\Xi,\Omega,(\mathcal{M},d),\Lambda)
\quad\text{with}\quad
\Omega=\C,\ \mathcal{M}=\{0,1\},\ \Xi=f,\ \Lambda=\{\mathrm{id}_\C\}.
\]
Then
\begin{enumerate}
\item $\mathrm{SCI}_G(\mathcal{P})=0$;
\item $\mathrm{SCI}_{\mathrm{TTE}}(f)=\infty$ (equivalently, $\mathrm{rank}_{\mathrm W}(f)=\infty$).
\end{enumerate}
\end{theorem}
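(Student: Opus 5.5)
The two items can be handled separately: the first by \cref{thm:finite-factor-scig0}, the second by contradiction via \cref{lem:borel-regularity}.

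For (1), take $m=1$, $f_1:=\mathrm{id}_\C\in\Lambda$ and $G:=\mathbf{1}_A:\C\to\{0,1\}$. Then $\Xi(z)=G(f_1(z))$ for all $z\in\C$, which is exactly a finite-query factorisation through $\Lambda$ in the sense of \cref{def:finite-factor}. \Cref{thm:finite-factor-scig0} then yields $\SCIG(\mathcal P)=0$. Concretely, the single general algorithm is $\Gamma:=\mathbf{1}_A$ with constant query set $\{\mathrm{id}_\C\}$. The consistency axiom holds trivially because $\Lambda$ contains the identity, so distinct inputs are always separated. No regularity of $G$ is needed; this is precisely the point of \cref{rem:core-logical}.

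For (2), suppose towards a contradiction that $\mathrm{rank}_{\mathrm W}(f)=n<\infty$, i.e.\ $f\le_{\mathrm W}\Limi{n}$. By \cref{lem:borel-regularity}, $f$ is Borel measurable with respect to the topology induced on $\C$ by the Cauchy representation, and $f^{-1}(\{1\})=A$ is Borel in that topology. The one point to check is that this induced topology is the usual Euclidean topology on $\C$. This holds because the Cauchy representation is admissible for the metric topology, so its final topology is the metric one. Hence $A$ would be a Borel subset of $\C$ in the standard sense, contradicting the choice of $A$ as analytic-complete. The standard descriptive-set-theoretic fact needed is that analytic-complete sets are not Borel, since otherwise every analytic set would be Borel as a continuous preimage. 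Therefore the set $\{k: f\le_{\mathrm W}\Limi{k}\}$ is empty, and $\mathrm{SCI}_{\mathrm{TTE}}(f)=\mathrm{rank}_{\mathrm W}(f)=\infty$ by \cref{def:type2sci}.

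The only step needing care is the topological identification in (2), namely that ``Borel with respect to the represented-space induced topology'' coincides with Euclidean Borel. I would settle it by citing admissibility of the Cauchy representation. Alternatively, one can argue directly at the name level: $\mathcal N_f$ is then Borel measurable on $\dom(\delta_\C)$, $A$ is the image of a Borel set under the continuous, open-on-domain surjection $\delta_\C$, and the identification of the final topology gives the same conclusion. Existence of analytic-complete subsets of $\C\cong\R^2$ is classical, for example the standard $\Sigma^1_1$-complete sets transferred via a Borel isomorphism to a Polish subspace. Since the statement takes $A$ as given, this does not need to be proved.
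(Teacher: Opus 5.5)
Your proposal is correct and follows the paper's proof. Part (1) goes via the finite-query factorisation $\mathbf{1}_A\circ\mathrm{id}_\C$ and \cref{thm:finite-factor-scig0}, and part (2) argues by contradiction through \cref{lem:borel-regularity}, where you usefully make explicit that the Cauchy-induced topology on $\C$ is the Euclidean one, a point the paper leaves implicit. Only your optional name-level alternative is loose as phrased: the image of a Borel set under a continuous (even open) surjection is in general only analytic, so you would need Suslin's theorem applied to both $A$ and $\C\setminus A$ (or a Borel section of $\delta_\C$), but the main argument does not rely on it.
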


\begin{proof}
We note first of all that such an analytic-complete set $A$ indeed exists, see, for example \cite[Sec.~25]{Kec95}.\\
\textbf{(1):} Since $f=\mathbf{1}_A\circ \mathrm{id}_\C$, this is a finite-query factorisation
through the single evaluation $\mathrm{id}_\C\in\Lambda$ with post-processing $G=\mathbf{1}_A$.
Thus $\mathrm{SCI}_G(\mathcal{P})=0$ by \cref{thm:finite-factor-scig0}.

\textbf{(2):} Suppose for contradiction that $f\le_{\mathrm W}\Limi{n}$ for some $n$.
Then by \cref{lem:borel-regularity}, the set $A=f^{-1}(\{1\})$ must be Borel,
contradicting the choice of $A$. Hence $f\not\le_{\mathrm W}\Limi{n}$ for all $n$, so $\mathrm{SCI}_{\mathrm{TTE}}(f)=\infty$.
\end{proof}

\begin{corollary}[No bound from raw type-G height to Weihrauch rank]\label{cor:no-bound-G-to-W}
There is no function
\[
        h:\mathbb N_0\cup\{\infty\}\to\mathbb N_0\cup\{\infty\}
\]
such that, for every countable-\(\Lambda\) SCI computational problem equipped with an arbitrary represented-space enrichment,
\[
        \SCIG(\mathcal P)\le n \quad\Longrightarrow\quad \mathrm{rank}_{\mathrm W} (\widehat{\Xi})\le h(n).
\]
\end{corollary}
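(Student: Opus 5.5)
The plan is to argue by contradiction, using the problem of \cref{thm:scig0-but-tteinf} as the single counterexample. Suppose such an $h$ exists and set $n:=0$. The only case that needs care is $h(0)=\infty$: then the implication is vacuous, since $\mathrm{rank}_{\mathrm W}\le\infty$ always holds. So the statement must be read as excluding bounds that are trivially infinite, i.e.\ as asserting that there is no $h$ taking finite values on finite arguments. In particular $h(0)\in\N_0$, and more generally we only need to refute finiteness of $h(0)$. I will make this reading explicit at the start of the proof.

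\textbf{Step 1: the counterexample.} Take $\mathcal P$ as in \cref{thm:scig0-but-tteinf}: $\Omega=\C$, $\Lambda=\{\mathrm{id}_\C\}$ (countable), $\mathcal M=\{0,1\}$, $\Xi=\mathbf 1_A$ with $A$ analytic-complete. By part (1) of that theorem, $\SCIG(\mathcal P)=0\le 0$.

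\textbf{Step 2: identify $\widehat\Xi$ with $f$.} With the enumeration $f_n:=\mathrm{id}_\C$ for all $n$, or equivalently a one-element enumeration, the map $\mathrm{Ev}_\Lambda$ sends $z\mapsto(z,z,\dots)$. Its image $\mathcal I_\Lambda$ is the diagonal of $\C^\N$, and $\widehat\Xi((z,z,\dots))=\mathbf 1_A(z)$. The projection to the first coordinate and the diagonal embedding are computable and mutually inverse between $(\mathcal I_\Lambda,\delta_{\mathcal I_\Lambda})$ and $(\C,\delta_\C)$. Hence $\widehat\Xi\equiv_{\mathrm W} f$, and \cref{thm:rank-wellposed}(1) gives $\mathrm{rank}_{\mathrm W}(\widehat\Xi)=\mathrm{rank}_{\mathrm W}(f)=\infty$ by part (2) of \cref{thm:scig0-but-tteinf}. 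We equip $\{0,1\}$ with its discrete representation, which is one admissible enrichment and is all the corollary's ``arbitrary'' quantifier requires.

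\textbf{Step 3: conclude.} We have $\SCIG(\mathcal P)\le 0$ but $\mathrm{rank}_{\mathrm W}(\widehat\Xi)=\infty>h(0)$, contradicting the assumed implication. The same problem also satisfies $\SCIG\le n$ for every $n$, so $h(n)$ cannot be finite for any $n$. The main obstacle is not mathematical but interpretive: the degenerate bound $h\equiv\infty$ must be excluded by the reading fixed above. Beyond that, the only point needing checking is Step 2, namely that passing through the diagonal information space preserves the Weihrauch degree; this is a routine computable-isomorphism check.
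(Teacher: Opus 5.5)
Your proposal is correct and follows the paper's proof: the paper also uses the analytic-complete indicator problem of \cref{thm:scig0-but-tteinf} and concludes that \emph{no finite value of} $h(0)$ works, so it implicitly adopts your reading that excludes the trivial bound $h\equiv\infty$. Your Step~2 adds something the paper leaves tacit when it passes from $\mathrm{rank}_{\mathrm W}(f)$ to $\mathrm{rank}_{\mathrm W}(\widehat{\Xi})$, namely the check via the diagonal embedding that $\widehat{\Xi}\equiv_{\mathrm W}\mathbf 1_A$.
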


\begin{proof}
\Cref{thm:scig0-but-tteinf} gives a problem with $\SCIG(\mathcal P)=0$ but
\[
        \mathrm{rank}_{\mathrm W} (\widehat{\Xi})=\infty.
\]
Thus no finite value of \(h(0)\) can make the implication true.
\end{proof}

\begin{remark}[The reverse direction requires a compatibility package]
A converse implication from Weihrauch rank to raw type-G SCI height is not a well-typed general theorem unless one fixes a compatibility package relating the
represented input names to the finite-evaluation interface \(\Lambda\). In compatible effective settings such implications must be proved from additional assumptions. This
is exactly the role of implemented towers and of the pure $\mathcal R-\Lim$ package in \cref{sec:uniformity-minimality}.
\end{remark}

The preceding results show that raw type-\(G\) SCI is too permissive for
descriptive-set-theoretic lower bounds. To recover such lower bounds one must impose
regularity at the deepest level of the tower. This motivates the regularity-restricted
hierarchy developed in Section~\ref{sec:IntHierarchy}.

\section{The Intermediate Hierarchy}{\label{sec:IntHierarchy}}
The last section is the inspiration point for our full intermediate hierarchy now. But first of all we sharply precise what we mean by a hierarchy.
\begin{definition}[Preorder]
A \textit{preorder} is a pair $(I,\preceq)$ where $I$ is a set and $\preceq$ is a relation on $I$
that is reflexive and transitive.
\end{definition}

\begin{definition}[Hierarchy indexed by a preorder]\label{def:hierarchy}
Let $U$ be a set (the \enquote{universe}) and let $(I,\preceq)$ be a preorder.
A family of subclasses $(\mathcal{H}_i)_{i\in I}$ with $\mathcal{H}_i\subseteq U$ is called an
\textit{$(I,\preceq)$-hierarchy on $U$} if
\[
i\preceq j \quad\Longrightarrow\quad \mathcal{H}_i \subseteq \mathcal{H}_j.
\]
\end{definition}

\begin{definition}[Regularity classes on $\Baire$]\label{def:reg-classes}
Let $\mathcal{R}$ be a class of partial maps $F:\subseteq \Baire\to\Baire$.
We consider the following standard classes (all w.r.t.\ the usual product topology on $\Baire$):
\begin{itemize}
\item $\Comp$: the (Type-2) computable partial maps.
\item $\Cont$: the continuous partial maps.
\item $\Bor$: the Borel-measurable partial maps (measurable w.r.t.\ the Borel $\sigma$-algebra on $\Baire$).
\item $\BaireMeas$: the Baire-measurable partial maps (measurable w.r.t.\ the $\sigma$-algebra generated by the
zero sets / equivalently by continuous real-valued maps).
\end{itemize}
We order such classes by inclusion: $\mathcal{R}\subseteq\mathcal{S}$.
\end{definition}

\begin{proposition}[Inclusion chain]\label{prop:reg-inclusions}
One has
\[
\Comp \subseteq \Cont \subseteq \Bor \subseteq \BaireMeas.
\]
\end{proposition}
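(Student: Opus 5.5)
The plan is to prove the three inclusions one at a time, each directly from the definitions. Throughout, a partial map $F:\subseteq\Baire\to\Baire$ is read as a total map on the subspace $\dom(F)\subseteq\Baire$. Following the intended reading of $\BaireMeas$, a map is Baire-measurable when $F^{-1}(U)$ has the Baire property in $\dom(F)$ for every open $U\subseteq\Baire$. That is, $F^{-1}(U)$ should differ from a relatively open subset of $\dom(F)$ by a meagre set of $\dom(F)$. At the end I will also note how the zero-set formulation in \cref{def:reg-classes} fits in.

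\textbf{Step 1: $\Comp\subseteq\Cont$.} This is the standard fact from Type-2 theory \cite{weihrauch2000computable}. If a Type-2 machine computes $F$ and $F(p)$ agrees with some $q$ on the first $N$ positions, then those $N$ output symbols were written after reading only a finite prefix $w$ of $p$. Hence every $p'\in\dom(F)\cap w\Baire$ has $F(p')$ starting with the same $N$ symbols. So the preimage of each basic cylinder is relatively open in $\dom(F)$, and $F$ is continuous.

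\textbf{Step 2: $\Cont\subseteq\Bor$.} Take $F$ continuous and $U\subseteq\Baire$ open. Then $F^{-1}(U)$ is relatively open in $\dom(F)$, so it belongs to the Borel $\sigma$-algebra of the subspace $\dom(F)$. Preimages commute with countable unions and complements, and the Borel sets of $\Baire$ are generated by the open sets. It follows that the preimage of every Borel set is Borel in $\dom(F)$.

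\textbf{Step 3: $\Bor\subseteq\BaireMeas$.} This is the main step. Fix a topological space $Z$; I will use $Z=\dom(F)$ with the subspace topology. The key claim is that $\mathcal{BP}(Z)$, the family of sets $S\subseteq Z$ with $S=V\,\triangle\,M$ for some open $V$ and meagre $M$, is a $\sigma$-algebra containing the open sets.

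Open sets lie in $\mathcal{BP}(Z)$ trivially, taking $M=\emptyset$. For countable unions, if $S_n=V_n\triangle M_n$, then $\bigcup_n S_n$ differs from $\bigcup_n V_n$ by a subset of $\bigcup_n M_n$, which is meagre.

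Complements are the part I expect to be the main obstacle. If $S=V\triangle M$, then $Z\setminus S=(Z\setminus V)\triangle M$. The set $Z\setminus V$ is closed and equals $\operatorname{int}(Z\setminus V)\cup\partial V$. The boundary of an open set is closed with empty interior, so it is nowhere dense. Therefore $Z\setminus S=\operatorname{int}(Z\setminus V)\triangle M'$, where $M'\subseteq M\cup\partial V$ is meagre, and the complement lies in $\mathcal{BP}(Z)$.

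Consequently $\mathcal{BP}(Z)$ contains every Borel subset of $Z$. Now let $F\in\Bor$ and $U\subseteq\Baire$ open. By Step 2's formalism, $F^{-1}(U)$ is Borel in $\dom(F)$, so it has the Baire property in $\dom(F)$. Hence $F\in\BaireMeas$. Note that the argument never uses that $\dom(F)$ is a Baire space, so arbitrary partial domains cause no difficulty.

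\textbf{Remark on the zero-set formulation.} In the metrizable space $\dom(F)$, every closed set $C$ is a zero set, namely $C=d(\cdot,C)^{-1}(0)$. Hence the $\sigma$-algebra generated by zero sets coincides with the Borel $\sigma$-algebra. So under the literal wording of \cref{def:reg-classes}, $\Bor\subseteq\BaireMeas$ holds immediately. Under the Baire-property reading, Step 3 gives the inclusion. Either way, the chain $\Comp\subseteq\Cont\subseteq\Bor\subseteq\BaireMeas$ follows.
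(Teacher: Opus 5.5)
Your proof is correct and follows the same route as the paper: the chain computable $\Rightarrow$ continuous $\Rightarrow$ Borel $\Rightarrow$ Baire measurable, with the last step resting on the fact that Borel sets have the Baire property. You are more careful than the paper's one-line appeal to Polish spaces in two ways: you prove that Baire-property sets form a $\sigma$-algebra in an arbitrary space, which is needed because $\dom(F)$ need not be Polish when partial maps are read relative to their domains; and you observe that under the literal zero-set wording of \cref{def:reg-classes} one even has $\Bor=\BaireMeas$ on metrizable domains.
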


\begin{proof}
On Baire space \(\mathbb N^{\mathbb N}\), every Type-2 computable partial functional is continuous on its domain. Every continuous partial map is Borel measurable, and every Borel measurable partial map is Baire measurable because every Borel set in a Polish space has the Baire property. This proves the inclusion chain.
\end{proof}

\subsection{Continuous And Borel-Weihrauch Reducibilities}{\label{sec:ContBorelWeihrauchRed}}
\subsubsection{Parametrized Weihrauch Reducibilities}

\begin{definition}[$\mathcal{R}$-Weihrauch reducibility]\label{def:R-weihrauch}
Let $f:\subseteq X\rightrightarrows Y$ and $g:\subseteq U\rightrightarrows V$ be problems between represented spaces.
Let $\mathcal{R}$ be a class of partial maps $\subseteq\Baire\to\Baire$.
We write
\[
f \le_{\mathrm W}^{\mathcal{R}} g
\]
if there exist $H,K\in \mathcal{R}$ such that for every realizer $G\vdash g$,
the map
\[
p \mapsto K\bigl(\langle p, G(H(p))\rangle\bigr)
\]
is a realizer of $f$. (Here $\langle\cdot,\cdot\rangle:\Baire\times\Baire\to\Baire$ is any fixed computable pairing.)
\end{definition}

\begin{remark}[Standard special cases]
\hspace{1mm}
\begin{itemize}
\item $\le_{\mathrm W}^{\Comp}$ is the usual (computable) Weihrauch reducibility.
\item $\le_{\mathrm W}^{\Cont}$ is the standard \textit{continuous Weihrauch reducibility}: Weihrauch reducibility has a continuous counterpart obtained by replacing the reduction functions $H,K$ by continuous ones, see e.g. \cite{brattka2011weihrauch}.
\end{itemize}
\end{remark}

\begin{proposition}[$\le_{\mathrm W}^{\mathcal{R}}$ is a preorder and monotone in $\mathcal{R}$]\label{prop:R-weihrauch-preorder}
Fix $\mathcal{R}$ that is closed under composition and contains all computable pairing/projection maps on $\Baire$.
Then $\le_{\mathrm W}^{\mathcal{R}}$ is a preorder.
Moreover, if $\mathcal{R}\subseteq\mathcal{S}$, then
\[
f \le_{\mathrm W}^{\mathcal{R}} g \ \Longrightarrow\ f \le_{\mathrm W}^{\mathcal{S}} g.
\]
\end{proposition}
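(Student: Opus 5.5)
The plan is to check reflexivity, transitivity and monotonicity directly from \cref{def:R-weihrauch}, using only closure of $\mathcal R$ under composition and the fact that $\mathcal R$ contains the computable pairing and projection maps. Write $\pi_1,\pi_2$ for the computable projections associated with $\langle\cdot,\cdot\rangle$, so that $\pi_1\langle p,q\rangle=p$ and $\pi_2\langle p,q\rangle=q$.

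\textbf{Reflexivity.} For $f\le_{\mathrm W}^{\mathcal R} f$ I take $H:=\id$, which is computable, and $K:=\pi_2$. For every realizer $G\vdash f$, the map $p\mapsto \pi_2\langle p,G(p)\rangle=G(p)$ is again a realizer of $f$. The only point to note is that $\id\in\mathcal R$. This holds because $\id=\pi_1\circ\langle\id,\id\rangle$, where the diagonal $p\mapsto\langle p,p\rangle$ is a computable pairing map and therefore lies in $\mathcal R$.

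\textbf{Transitivity.} Suppose $f\le_{\mathrm W}^{\mathcal R} g$ via $(H_1,K_1)$ and $g\le_{\mathrm W}^{\mathcal R} h$ via $(H_2,K_2)$. I set
\[
H:=H_2\circ H_1,
\qquad
K(\langle p,r\rangle):=K_1\bigl(\langle p,\,K_2(\langle H_1(p),r\rangle)\rangle\bigr).
\]
Let $G\vdash h$. By the second reduction, $G':=q\mapsto K_2\langle q,G(H_2(q))\rangle$ realizes $g$. By the first reduction, $p\mapsto K_1\langle p,G'(H_1(p))\rangle$ realizes $f$, and this map is exactly $p\mapsto K\langle p,G(H(p))\rangle$.

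The step needing care is showing $K\in\mathcal R$. I write $K$ as a composite of maps in $\mathcal R$:
\begin{itemize}
\item the projections $\pi_1,\pi_2$;
\item the map $H_1\circ\pi_1$;
\item pairing operations $u\mapsto\langle a(u),b(u)\rangle$ built from maps $a,b$ already in $\mathcal R$.
\end{itemize}
Strictly, closure of $\mathcal R$ under such ``tupling'' $\langle a,b\rangle$ is not literally composition with a fixed pairing map. I would interpret the hypothesis ``contains all computable pairing/projection maps and is closed under composition'' as including this closure, or check it directly for each listed class:
\begin{itemize}
\item for $\Comp$ and $\Cont$, it is immediate;
\item for $\Bor$ and $\BaireMeas$, preimages of basic open sets of $\Baire$ under $\langle a,b\rangle$ are finite intersections of preimages under $a$ and $b$, hence measurable.
\end{itemize}
Partiality and domains are handled as in the classical Weihrauch case: all domain conditions are inherited from those of the realizers.

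\textbf{Monotonicity.} If $\mathcal R\subseteq\mathcal S$, then the witnesses $H,K\in\mathcal R$ for $f\le_{\mathrm W}^{\mathcal R}g$ also lie in $\mathcal S$. The realizer condition does not mention the class at all, so the same $H,K$ witness $f\le_{\mathrm W}^{\mathcal S}g$. The main obstacle is therefore only the tupling-closure remark in the transitivity step.
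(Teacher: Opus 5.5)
Your proof is correct and follows the same route as the paper: $H=\id$ and $K=\pi_2$ for reflexivity, composed witnesses for transitivity, and monotonicity because witnesses in $\mathcal R$ are also witnesses in $\mathcal S$. Your caveat about tupling $u\mapsto\langle a(u),b(u)\rangle$ is exactly what the paper's proof tacitly uses when it appeals to ``closure of $\mathcal R$ under composition and pairing''. It is genuinely needed: the class of partial maps that are computable or constant is closed under composition, yet for noncomputable $x$ the map $p\mapsto\langle p,x\rangle$ reduces to the constant-$x$ problem, which reduces to the constant-$0^\omega$ problem, while the first does not reduce directly to the third.
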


\begin{proof}
Reflexivity is witnessed by taking $H(p)=p$ and $K(\langle p,q\rangle)=q$, which are computable (hence in $\mathcal{R}$).
Transitivity is obtained by composing witnesses; closure of $\mathcal{R}$ under composition and pairing ensures
the composed witnesses remain in $\mathcal{R}$.
Monotonicity in $\mathcal{R}$ is immediate: any witnesses in $\mathcal{R}$ are also witnesses in $\mathcal{S}$.
\end{proof}

\begin{definition}[Continuous and Borel Weihrauch reducibilities]\label{def:cont-borel-weihrauch}
Define
\[
\le_{\mathrm W}^{\mathrm{cont}} := \le_{\mathrm W}^{\Cont},
\qquad
\le_{\mathrm W}^{\mathrm{Bor}} := \le_{\mathrm W}^{\Bor}.
\]
\end{definition}

\begin{remark}[Oracle-relative view of $\le_{\mathrm W}^{\mathrm{cont}}$]
A common equivalent presentation is: $f\le_{\mathrm W}^{\mathrm{cont}} g$ iff there exists an oracle
$p\in\Baire$ such that $f\le_{\mathrm W}^{p} g$ (i.e.\ the reduction functionals are computable relative to $p$).
This is exactly the viewpoint used when Brattka-Gherardi-Pauly relativize \cite[Theorem 6.5]{brattka2021weihrauch} and characterize the non-effective Borel hierarchy via existence of an oracle.
\end{remark}

\subsection{Restricted SCI Towers}{\label{sec:RestrSCITowers}}

\subsubsection{Topological Enrichment}

\begin{definition}[Topologized SCI problem]\label{def:topoSCI}
A \textit{topologized SCI computational problem} is a tuple
\[
\mathcal{P}_\tau=(\Xi,\Omega,(\mathcal{M},d),\Lambda,\tau)
\]
where $(\Xi,\Omega,(\mathcal{M},d),\Lambda)$ is a computational problem in the SCI sense, and $\tau$ is a topology on $\Omega$.
We write $\mathcal{B}(\Omega)$ for the Borel $\sigma$-algebra of $(\Omega,\tau)$ and $\mathcal{B}(\mathcal{M})$ for the Borel $\sigma$-algebra of the metric space $(\mathcal{M},d)$.
\end{definition}

\begin{remark}[Why the extra structure is necessary]
To state that a subset \(A\subseteq\Omega\) is Borel, analytic, or complete for a
descriptive pointclass, one must equip \(\Omega\) with a topology or at least a standard
Borel structure. Thus any attempt to import descriptive-set-theoretic lower bounds
into SCI necessarily fixes such additional structure.
\end{remark}

\subsubsection{Regularity-Restricted Base Algorithms And Towers}
Let
\[
\mathfrak{R}_{\mathrm{top}} := \{\Cont,\Bor,\BaireMeas\}.
\]
For a topological space $X$ and a metric space $Y$, define
\[
\Reg_{\Cont}(X,Y) := \{h:X\to Y : h \text{ is continuous}\},
\]
\[
\Reg_{\Bor}(X,Y) := \{h:X\to Y : h \text{ is Borel measurable}\},
\]
\[
\Reg_{\BaireMeas}(X,Y) := \{h:X\to Y : h \text{ is Baire measurable}\}.
\]
We order the labels in $\mathfrak{R}_{\mathrm{top}}$ by
\[
r \sqsubseteq_{\mathrm{top}} s
\quad:\Longleftrightarrow\quad
\Reg_r(X,Y)\subseteq \Reg_s(X,Y)
\]
for every topological space $X$ and every metric space $Y$.
Equivalently,
\[
\Cont\sqsubseteq_{\mathrm{top}}\Bor
\sqsubseteq_{\mathrm{top}}\BaireMeas.
\]
Therefore in \cref{sec:RestrSCITowers} - \cref{sec:ExampleSCI} we use $r,s \in \mathfrak{R}_{\mathrm{top}}$ for topological regularity labels, other than we did in \cref{sec:ContBorelWeihrauchRed}, where we reserved calligraphic letters $\mathcal R,\mathcal S$ for classes of \textit{realizers} on $\mathbb{N}^{\mathbb{N}}$.

\begin{definition}[Fixed-query $(\Lambda,r)$-base algorithms]\label{def:fixed-query-base}
Let $\mathcal{P}_\tau=(\Xi,\Omega,(\mathcal{M},d),\Lambda,\tau)$ be a topologized SCI computational problem and let
$r \in \mathfrak{R}_{\mathrm{top}}$.
A general algorithm $\Gamma:\Omega\to \mathcal{M}$ is called \textit{fixed-query $(\Lambda,r)$-regular} if there exist $f_1,\dots,f_m\in\Lambda$ and a map $G:\C^m\to \mathcal{M}$ with $G\in \Reg_r(\mathbb{C}^m,\mathcal M)$ such that
\[
\Gamma(A)=G\bigl(f_1(A),\dots,f_m(A)\bigr)\quad\forall A\in\Omega,
\]
and each $f_i:(\Omega,\tau)\to\C$ belongs to $\Reg_r(\Omega,\mathbb{C})$.
\end{definition}

\begin{definition}[$r$-general algorithms and $r$-towers]\label{def:R-tower}
Let $\mathcal{P}_\tau=(\Xi,\Omega,(\mathcal{M},d),\Lambda,\tau)$ be a topologized SCI computational problem,
and let $r \in \mathfrak{R}_{\mathrm{top}}$.

An \textit{$r$-general algorithm} is a general algorithm $\Gamma:\Omega\to\mathcal{M}$ such that 
\[
\Gamma \in \Reg_r(\Omega,\mathcal M).
\]
Here, unlike \cref{def:fixed-query-base}, the finite query set may depend on the input!

A \textit{height-$k$ $r$-tower} is a standard tower in the sense of \cref{def:standard-tower} whose deepest-level maps $\Gamma_{n_k,\dots,n_1}:\Omega\to\mathcal{M}$ are $r$-general algorithms.
\end{definition}

The intuition here is: there are two distinct ways to enforce regularity at the base level.
\Cref{def:fixed-query-base} restricts the \textit{post-processing} from finitely many queried oracle values
(fixed-query form), while \cref{def:R-tower} restricts the \textit{entire} base algorithm as a map
$(\Omega,\tau)\to (\mathcal{M},d)$ (allowing adaptive querying but controlling descriptive complexity).
The restricted heights $\mathrm{SCI}_{G,r}$ quantify how many iterated limits remain
once this regularity is enforced.

\begin{definition}[Restricted SCI values]\label{def:SCI-R}
For $r\in\mathfrak{R}_{\mathrm{top}}$ define
\[
\mathrm{SCI}_{G,r}(\mathcal{P}_\tau)
\;:=\;
\min\Bigl\{
k\in\mathbb N_0 :
\text{there exists a height-}k\text{ $r$-tower computing }\Xi
\Bigr\},
\]
with value $\infty$ if the set is empty.
\end{definition}

\subsubsection{The Hierarchy Property}

\begin{theorem}[Restricted SCI models form a hierarchy in the regularity parameter]\label{thm:hierarchy-in-R}
Let $\mathcal{P}_\tau$ be a fixed topologized SCI problem.
If $r \sqsubseteq_{\mathrm{top}} s$,
then
\[
\mathrm{SCI}_{G,s}(\mathcal{P}_\tau) \ \le\ \mathrm{SCI}_{G,r}(\mathcal{P}_\tau).
\]
In particular,
\[
\mathrm{SCI}_{G,\BaireMeas}(\mathcal{P}_\tau)\ \le\ \mathrm{SCI}_{G,\Bor}(\mathcal{P}_\tau)\ \le\ \mathrm{SCI}_{G,\Cont}(\mathcal{P}_\tau).
\]
\end{theorem}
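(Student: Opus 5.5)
The plan is a direct inclusion-of-witness-sets argument, parallel to the proof of \cref{thm:rank-wellposed}. For $r\in\mathfrak{R}_{\mathrm{top}}$ let
\[
S_r:=\{k\in\mathbb N_0:\ \text{there exists a height-}k\ r\text{-tower computing }\Xi\},
\]
so that $\mathrm{SCI}_{G,r}(\mathcal P_\tau)=\min S_r$, with value $\infty$ when $S_r=\emptyset$. It then suffices to show that $r\sqsubseteq_{\mathrm{top}} s$ implies $S_r\subseteq S_s$. Granting this, we conclude in two cases. If $S_r=\emptyset$, the right-hand side is $\infty$ and the inequality is trivial. Otherwise $S_s\supseteq S_r\neq\emptyset$, and a minimum taken over a larger set is no larger.

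To prove $S_r\subseteq S_s$, fix $k\in S_r$ and a height-$k$ $r$-tower $\{\Gamma_{n_k,\dots,n_1}\}$ computing $\Xi$. By \cref{def:R-tower}, each deepest-level map is a general algorithm lying in $\Reg_r(\Omega,\mathcal M)$, where $\Omega$ carries the fixed topology $\tau$ and $\mathcal M$ the metric $d$. The order $\sqsubseteq_{\mathrm{top}}$ is defined by $\Reg_r(X,Y)\subseteq\Reg_s(X,Y)$ for every topological space $X$ and metric space $Y$. Specialising to $X=(\Omega,\tau)$ and $Y=(\mathcal M,d)$, each $\Gamma_{n_k,\dots,n_1}$ lies in $\Reg_s(\Omega,\mathcal M)$.

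The general-algorithm property (the query policy and the finite-information conditions of \cref{def:gen-alg}) does not involve $r$ at all. The iterated-limit identity $\Xi(A)=\lim_{n_k}\cdots\lim_{n_1}\Gamma_{n_k,\dots,n_1}(A)$ likewise depends only on the maps themselves. Hence the same family is a height-$k$ $s$-tower, and $k\in S_s$.

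For the displayed chain, we apply this result twice, using the stated order $\Cont\sqsubseteq_{\mathrm{top}}\Bor\sqsubseteq_{\mathrm{top}}\BaireMeas$. If desired, this order can be checked directly. Continuous maps are Borel measurable since preimages of open sets are open. Borel measurable maps are measurable for the $\sigma$-algebra the paper uses for $\BaireMeas$, provided $\Reg_{\BaireMeas}$ is read as a $\sigma$-algebra containing the Borel sets, as in \cref{prop:reg-inclusions}. The only potential obstacle is exactly this reading of Baire measurability on a general space $X$: with the literal zero-set definition, the Baire $\sigma$-algebra can be strictly smaller than the Borel one on non-metrizable $X$. I would sidestep this by relying on the paper's stipulated ordering of the labels, which is part of the definition of $\sqsubseteq_{\mathrm{top}}$. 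The main argument then needs only that ordering.
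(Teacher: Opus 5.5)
Your proposal is correct and matches the paper's proof: every height-$k$ $r$-tower is already a height-$k$ $s$-tower, since only the regularity clause of \cref{def:R-tower} refers to $r$, so the sets of admissible heights are nested and the minima compare; the displayed chain is then read off from the stipulated order $\Cont\sqsubseteq_{\mathrm{top}}\Bor\sqsubseteq_{\mathrm{top}}\BaireMeas$. Your caveat is also well taken: the paper simply asserts that order, and under the literal zero-set reading of Baire measurability the step $\Bor\sqsubseteq_{\mathrm{top}}\BaireMeas$ can fail on non-perfectly-normal spaces, so the chain should be understood either via the stipulation, via the Baire-property reading, or for metrizable $\tau$.
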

\begin{proof}
Any height-$k$ $r$-tower is in particular a height-$k$ $s$-tower whenever
$r \sqsubseteq_{\mathrm{top}} s$. The displayed chain follows from
\[
\Cont\sqsubseteq_{\mathrm{top}}\Bor
\sqsubseteq_{\mathrm{top}}\BaireMeas.
\]
\end{proof}

\begin{theorem}[A two-parameter hierarchy: regularity and height]\label{thm:two-parameter-hierarchy}
Fix a universe $U$ of topologized SCI computational problems.
For each pair $(r,k)$ with $r\in\mathfrak{R}_{\mathrm{top}}$ and $k\in\N_0$, define
\[
\mathcal{H}_{(r,k)} := \{\ \mathcal{P}_\tau\in U\ :\ \mathrm{SCI}_{G,r}(\mathcal{P}_\tau)\le k\ \}.
\]
Equip the index set
\[
I:=\mathfrak{R}_{\mathrm{top}}\times\mathbb N_0
\]
with the preorder
\[
(r,k)\preceq(s,\ell)
\quad:\Longleftrightarrow\quad
r\sqsubseteq_{\mathrm{top}} s \text{ and } k\le \ell.
\]
Then $(\mathcal{H}_{(r,k)})_{(r,k)\in I}$ is an $(I,\preceq)$-hierarchy in the sense of \cref{def:hierarchy}.
\end{theorem}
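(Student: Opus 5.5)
The plan is to verify the two ingredients of \cref{def:hierarchy} directly: first that $(I,\preceq)$ is a preorder, and then that $(r,k)\preceq(s,\ell)$ implies $\mathcal H_{(r,k)}\subseteq\mathcal H_{(s,\ell)}$.

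For the preorder part, $\preceq$ is the product of $\sqsubseteq_{\mathrm{top}}$ on $\mathfrak R_{\mathrm{top}}$ with the usual order $\le$ on $\mathbb N_0$. Reflexivity and transitivity of $\sqsubseteq_{\mathrm{top}}$ follow at once from its definition via inclusion $\Reg_r(X,Y)\subseteq\Reg_s(X,Y)$ for all $X,Y$, since set inclusion is reflexive and transitive. The corresponding properties of $\le$ are standard. A componentwise product of preorders is a preorder, so $(I,\preceq)$ is one.

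For monotonicity, fix $(r,k)\preceq(s,\ell)$ and $\mathcal P_\tau\in\mathcal H_{(r,k)}$, so that $\mathrm{SCI}_{G,r}(\mathcal P_\tau)\le k$. By \cref{thm:hierarchy-in-R}, since $r\sqsubseteq_{\mathrm{top}}s$, we get $\mathrm{SCI}_{G,s}(\mathcal P_\tau)\le\mathrm{SCI}_{G,r}(\mathcal P_\tau)\le k\le\ell$. Hence $\mathcal P_\tau\in\mathcal H_{(s,\ell)}$. Values equal to $\infty$ cause no issue: membership in $\mathcal H_{(r,k)}$ already forces a finite value, and the order on $\mathbb N_0\cup\{\infty\}$ extends $\le$.

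The only point needing care is that the regularity and height parameters are combined in two steps. Regularity is handled by \cref{thm:hierarchy-in-R}. The height step is purely numerical, because the defining condition of $\mathcal H$ is ``$\le k$'' rather than ``$=k$''. This means no padding of towers is needed (inserting dummy limit indices to raise a height-$k$ tower to height $\ell$). If one insisted on exact heights, padding would be the real obstacle, since an $r$-tower would have to be extended by constant dummy indices while keeping the deepest maps $r$-general algorithms. The threshold formulation avoids this entirely, so I expect no genuine obstacle.
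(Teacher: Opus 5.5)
Your proof is correct and follows essentially the paper's argument: the paper simply observes that any $r$-tower of height $\le k$ is also an $s$-tower of height $\le \ell$, which is what you get by invoking \cref{thm:hierarchy-in-R} together with the threshold form of $\mathcal{H}_{(r,k)}$. Your explicit check that $(I,\preceq)$ is a preorder, using only the inclusion-based definition of $\sqsubseteq_{\mathrm{top}}$, completes a step the paper leaves implicit.
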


\begin{proof}
If $(r,k)\preceq(s,\ell)$, then any $r$-tower of height $\le k$
is also an $s$-tower and has height $\le \ell$. Hence $\mathcal{H}_{(r,k)}\subseteq \mathcal{H}_{(s,\ell)}$.
\end{proof}

\begin{definition}[Optional refinement: add a query-policy parameter]\label{def:Q-R-refinement}
Let $\mathcal{Q}:=\{\mathrm{fix},\mathrm{ad}\}$.
For $q=\mathrm{fix}$ use fixed-query $(\Lambda,r)$-regular base maps in the sense of \cref{def:fixed-query-base};
for $q=\mathrm{ad}$ use $r$-general base maps in the sense of \cref{def:R-tower}.

Define the corresponding base-map class
\[
\mathsf{Base}(q,r)
:=
\begin{cases}
\{\text{fixed-query $(\Lambda,r)$-regular base maps}\}, & q=\mathrm{fix},\\
\{\text{$r$-general base maps}\}, & q=\mathrm{ad}.
\end{cases}
\]

Define \(\mathrm{SCI}^{q,r}_G(\mathcal P_\tau)\) as the least height of a standard tower for \(\mathcal P_\tau\) whose deepest-level maps lie in \(\mathsf{Base}(q,r)\).
\end{definition}

\begin{definition}[Index preorder for the $(q,r)$-refinement]\label{def:QRK-preorder}
Let
\[
I := \mathcal Q \times \mathfrak{R}_{\mathrm{top}} \times \mathbb N_0.
\]
For $i=(q,r,k)$ and $j=(q',r',k')$ in $I$, define
\[
i\preceq j
\;:\Longleftrightarrow\;
\mathsf{Base}(q,r)\subseteq \mathsf{Base}(q',r')
\ \text{ and }\ k\le k'.
\]
Then $(I,\preceq)$ is a preorder.
\end{definition}

\begin{proposition}[Hierarchy property]\label{prop:QRK-hierarchy}
Fix a universe $U$ of topologized SCI problems.
For each index $i=(q,r,k)\in I$ define
\[
\mathcal{H}_i
\;:=\;
\bigl\{\mathcal{P}_\tau\in U : \mathrm{SCI}^{q,r}_G(\mathcal{P}_\tau)\le k\bigr\}.
\]
Then the family $(\mathcal{H}_i)_{i\in I}$ is an $(I,\preceq)$-hierarchy on $U$ in the sense of
\cref{def:hierarchy}.
\end{proposition}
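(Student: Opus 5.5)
The plan is to unfold the definitions; the argument has the same shape as the proof of \cref{thm:two-parameter-hierarchy}. Fix $i=(q,r,k)$ and $j=(q',r',k')$ in $I$ with $i\preceq j$. By \cref{def:QRK-preorder} this means $\mathsf{Base}(q,r)\subseteq\mathsf{Base}(q',r')$ and $k\le k'$. Take $\mathcal P_\tau\in\mathcal H_i$, so $\mathrm{SCI}^{q,r}_G(\mathcal P_\tau)\le k$. I must show $\mathrm{SCI}^{q',r'}_G(\mathcal P_\tau)\le k'$.

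\emph{Step 1 (regularity/query inclusion).} Since $\mathrm{SCI}^{q,r}_G(\mathcal P_\tau)\le k<\infty$, there is some $k_0\le k$ and a standard tower of height $k_0$ in the sense of \cref{def:standard-tower}. This tower computes $\Xi$, and all of its deepest-level maps $\Gamma_{n_{k_0},\dots,n_1}$ lie in $\mathsf{Base}(q,r)$. Because $\mathsf{Base}(q,r)\subseteq\mathsf{Base}(q',r')$, the same indexed family is already a height-$k_0$ tower with deepest-level maps in $\mathsf{Base}(q',r')$. Nothing about the limits changes, since only the admissibility label of the base maps is being reinterpreted. Hence $\mathrm{SCI}^{q',r'}_G(\mathcal P_\tau)\le k_0\le k\le k'$, and so $\mathcal P_\tau\in\mathcal H_j$.

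\emph{Step 2 (well-definedness of the indexing).} I also have to check that $(I,\preceq)$ really is a preorder, because \cref{def:hierarchy} presupposes this. Reflexivity and transitivity follow directly from the reflexivity and transitivity of set inclusion on the classes $\mathsf{Base}(\cdot,\cdot)$ and of $\le$ on $\mathbb N_0$.

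\emph{Step 3 (the one subtle point).} The only place needing care is that $\mathsf{Base}(q,r)$ should be read as a class for the fixed problem $\mathcal P_\tau$, or uniformly over $U$. The inclusion in $\preceq$ must be understood in whichever sense makes Step 1 apply to each $\mathcal P_\tau\in U$. I would state explicitly that the inclusion is required for every problem in $U$, as with $\sqsubseteq_{\mathrm{top}}$, which is quantified over all $X,Y$.

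Under that reading, no padding of towers to height exactly $k'$ is needed. The values $\mathrm{SCI}^{q,r}_G$ are minima, so "$\le k'$" only requires some tower of height at most $k'$. This avoids the potential obstacle of showing that a height-$k$ tower can be lifted to height $k'$ by adding dummy indices; that lift would in any case be trivial, because the base maps are constant in the new indices and therefore remain in the same class.
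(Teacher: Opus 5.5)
Your proposal is correct and matches the paper's proof essentially verbatim: a tower of height at most $k$ whose deepest-level maps lie in $\mathsf{Base}(q,r)$ is reused unchanged as a tower of height at most $k'$ whose deepest-level maps lie in $\mathsf{Base}(q',r')$. Your extra observation, that $\mathsf{Base}(q,r)\subseteq\mathsf{Base}(q',r')$ must hold for each problem in $U$ because these classes depend on $\Omega$, $\Lambda$, $\tau$ and $\mathcal{M}$, is a sensible clarification that the paper leaves implicit.
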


\begin{proof}
Let $i=(q,r,k)\preceq j=(q',r',k')$ and let $\mathcal{P}_\tau\in\mathcal{H}_i$.
Then $\mathrm{SCI}^{q,r}_G(\mathcal{P}_\tau)\le k$, so there exists a tower of height $\le k$
computing $\mathcal{P}_\tau$ whose base maps lie in $\mathsf{Base}(q,r)$.
Since $\mathsf{Base}(q,r)\subseteq \mathsf{Base}(q',r')$ and $k\le k'$,
the same tower is admissible for $(q',r')$ and has height $\le k'$.
Hence $\mathrm{SCI}^{q',r'}_G(\mathcal{P}_\tau)\le k'$, i.e.\ $\mathcal{P}_\tau\in\mathcal{H}_j$.
Therefore $\mathcal{H}_i\subseteq \mathcal{H}_j$.
\end{proof}

\begin{remark}[Relation to Kihara-style reducibilities]
Kihara's piecewise-continuity reducibilities in \cite{kihara2017borel} provide a related regularity-sensitive
framework for reducibility and uniformization problems. The present hierarchy differs
in that every SCI problem comes with an explicit finite-information interface
\(\Lambda\), a tower-height parameter, and a distinction between fixed-query and
adaptive-query base policies. A more detailed comparison is given in the extended
arXiv version.
\end{remark}

\subsection{Comparison Theorems: What Each Restriction Buys You}

\subsubsection{Borel-Restricted Towers}

We shortly recall a standard but here very crucial result.

\begin{lemma}[Pointwise limits of Borel maps are Borel]\label{lem:pointwise-limit-borel}
Let \((\Omega,\mathcal A)\) be a measurable space and let \((\mathcal M,d)\) be a metric space.
Let
\[
        f_n:\Omega\to \mathcal M
\]
be Borel measurable maps, and suppose
\[
        f(\omega):=\lim_{n\to\infty}f_n(\omega)
\]
exists for every \(\omega\in\Omega\). Then \(f:\Omega\to \mathcal M\) is Borel measurable.
\end{lemma}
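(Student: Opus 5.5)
The plan is to reduce Borel measurability of \(f\) to a statement about preimages of closed sets, because these are easy to describe through the metric. Since the Borel \(\sigma\)-algebra of \((\mathcal M,d)\) is generated by the open sets, and equivalently by the closed sets, it suffices to show that \(f^{-1}(F)\in\mathcal A\) for every closed \(F\subseteq\mathcal M\). The family \(\{B\subseteq\mathcal M: f^{-1}(B)\in\mathcal A\}\) is a \(\sigma\)-algebra, so once it contains all closed sets it contains all Borel sets.

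\textbf{Key step.} For a closed set \(F\), the distance function \(x\mapsto d(x,F)\) is \(1\)-Lipschitz and vanishes exactly on \(F\). (If \(F=\emptyset\) the preimage is empty, so we may assume \(F\neq\emptyset\).) Put \(U_j:=\{x\in\mathcal M: d(x,F)<1/j\}\), which is open. I will prove the set identity
\[
        f^{-1}(F)=\bigcap_{j\in\mathbb N}\ \bigcup_{N\in\mathbb N}\ \bigcap_{n\ge N} f_n^{-1}(U_j).
\]
For \(\subseteq\): if \(f(\omega)\in F\), then \(d(f_n(\omega),F)\le d(f_n(\omega),f(\omega))\to0\), so for each \(j\) we have \(f_n(\omega)\in U_j\) for all large \(n\). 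For \(\supseteq\): if for every \(j\) eventually \(d(f_n(\omega),F)<1/j\), then by continuity of \(d(\cdot,F)\) and \(f_n(\omega)\to f(\omega)\) we get \(d(f(\omega),F)\le 1/j\) for all \(j\). Hence \(d(f(\omega),F)=0\), and \(f(\omega)\in F\) because \(F\) is closed.

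Each \(f_n^{-1}(U_j)\) lies in \(\mathcal A\), since \(f_n\) is measurable and \(U_j\) is open. The right-hand side is therefore a countable combination of countable intersections and unions of sets in \(\mathcal A\), so it lies in \(\mathcal A\). This concludes the proof.

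\textbf{Main obstacle.} The delicate point is to avoid any separability assumption on \(\mathcal M\). Arguing via measurability of \(\omega\mapsto d(f_n(\omega),y)\), or via a countable base, would need separability. Working with closed sets and the distance function \(d(\cdot,F)\) sidesteps this entirely, since the only ingredients are preimages of open sets under the \(f_n\). The second point to handle carefully is the inclusion \(\supseteq\), which uses both continuity of \(d(\cdot,F)\) and closedness of \(F\).
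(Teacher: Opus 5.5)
Your proof is correct and is essentially the paper's argument. The paper tests measurability on open sets \(U\), using the inner shrinkings \(U_r=\{y:\operatorname{dist}(y,\mathcal M\setminus U)>r\}\) and the identity \(f^{-1}(U)=\bigcup_{m}\bigcup_{N}\bigcap_{n\ge N}f_n^{-1}(U_{2^{-m}})\); you test on closed sets \(F\) with the outer neighbourhoods \(\{d(\cdot,F)<1/j\}\), which is a dual form of the same distance-function trick and, like the paper's version, needs no separability of \(\mathcal M\).
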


\begin{proof}
Let \(U\subseteq \mathcal M\) be open. For \(r>0\), define
\[
        U_r:=\{y\in \mathcal M:\operatorname{dist}(y, \mathcal M\setminus U)>r\},
\]
with the convention \(\operatorname{dist}(y,\varnothing)=+\infty\). Then
\[
        f^{-1}(U) =
        \bigcup_{m=1}^{\infty}
        \bigcup_{N=1}^{\infty}
        \bigcap_{n\ge N} f_n^{-1}(U_{2^{-m}}).
\]
The right-hand side is measurable, and the equality follows from the definition
of pointwise convergence and the openness of \(U\). Hence \(f\) is Borel measurable.
\end{proof}

\begin{theorem}[Borel towers compute only Borel targets]\label{thm:borelTowersBorelTargets}
Let
\[
\mathcal P_\tau=(\Xi,\Omega,(\mathcal M,d),\Lambda,\tau)
\]
be a topologized SCI problem with \(\tau\) Polish. If \(\Xi\) is computed by a
height-\(k\) tower whose deepest-level maps are Borel measurable as maps
\[
(\Omega,\tau)\to (\mathcal M,d),
\]
then
\[
\Xi:(\Omega,\tau)\to(\mathcal M,d)
\]
is Borel measurable.  In particular, for a decision problem \(\mathcal M=\{0,1\}\) with the
discrete topology, the acceptance set is Borel.
\end{theorem}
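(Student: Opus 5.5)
The plan is to argue by induction on the height $k$, using \cref{lem:pointwise-limit-borel} as the only analytic input. The target statement is that every intermediate semantic map of the tower is Borel measurable as a map $(\Omega,\tau)\to(\mathcal M,d)$. Then $\Xi$, the outermost limit, inherits Borel measurability.

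\textbf{Case $k=0$.} Here the tower is a single map $\Gamma$ with $\Gamma=\Xi$ on $\Omega$. By hypothesis $\Gamma$ is Borel measurable, so there is nothing to prove.

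\textbf{Case $k\ge1$.} I will show by descending recursion on $r$ that each intermediate map $\Gamma_{n_k,\ldots,n_r}$ from \cref{def:standard-tower} is Borel, for all index tuples.
\begin{enumerate}
\item The deepest maps $\Gamma_{n_k,\ldots,n_1}$ are Borel by hypothesis.
\item Suppose every $\Gamma_{n_k,\ldots,n_{r-1}}$ is Borel. Fix the outer indices $(n_k,\ldots,n_r)$. Then $\Gamma_{n_k,\ldots,n_r}(A)=\lim_{n_{r-1}\to\infty}\Gamma_{n_k,\ldots,n_{r-1}}(A)$ is a pointwise limit, at every $A\in\Omega$, of a sequence of Borel maps into the metric space $(\mathcal M,d)$.
\item \Cref{lem:pointwise-limit-borel}, applied with $\mathcal A=\mathcal B(\Omega)$, shows that $\Gamma_{n_k,\ldots,n_r}$ is Borel.
\item One final application to $\Xi(A)=\lim_{n_k}\Gamma_{n_k}(A)$ gives that $\Xi$ is Borel.
\end{enumerate}

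The main obstacle is that the recursion needs every intermediate limit to exist at every $A\in\Omega$, since \cref{lem:pointwise-limit-borel} requires pointwise convergence everywhere. The definition of a tower only asserts that the iterated limit exists. I will read ``the iterated limit exists'' as in \cref{def:standard-tower}: the intermediate maps are defined recursively, so each inner limit exists wherever the next one is formed. This is the standard SCI convention, and under it the maps are total on $\Omega$.

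Polishness of $\tau$ is not actually needed for the measurability argument. It only ensures that $\mathcal B(\Omega)$ is a standard Borel structure, so that the conclusion is meaningful in descriptive set theory.

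\textbf{Decision problems.} Take $\mathcal M=\{0,1\}$ with the discrete metric. The singleton $\{1\}$ is open, so $\Xi^{-1}(\{1\})$ is Borel, and this is the acceptance set.
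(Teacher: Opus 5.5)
Your proof is correct and is essentially the paper's argument: the paper inducts on the height $k$, treating each $\Gamma_{n_k}$ as computed by a height-$(k-1)$ Borel tower and applying \cref{lem:pointwise-limit-borel} to the outermost limit, which is the same repeated use of the pointwise-limit lemma that you carry out level by level from the bottom. Your side remarks are also accurate: the paper relies on the same reading that every inner limit exists at every $A\in\Omega$, and it never actually uses Polishness of $\tau$.
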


\begin{proof}
We prove the more general statement that the target map of every finite-height Borel tower into a metric space is Borel.

For height \(0\), the target map is itself a Borel deepest-level map.

Assume the result holds for heights \(<k\). If \(\Xi\) is computed by a height-\(k\)
Borel tower, then
\[
\Xi(\omega)=\lim_{n_k\to\infty} \Gamma_{n_k}(\omega)
\]
for maps \(\Gamma_{n_k}\) each computed by a height-\((k-1)\) Borel tower. By the
induction hypothesis, every \(\Gamma_{n_k}\) is Borel. By \cref{lem:pointwise-limit-borel}, the pointwise limit \(\Xi\) is Borel.
The decision-case statement follows by taking the preimage of \(\{1\}\).
\end{proof}

\begin{remark}[Why this recovers standard lower-bound methods]
\Cref{thm:borelTowersBorelTargets} is the formal premise behind many
descriptive-set-theoretic lower bounds: if all deepest-level maps are effective, recursive,
Type-2 computable, continuous, or Borel, then they are in particular Borel in the
relevant represented/topological structure. Hence a non-Borel acceptance set rules
out every finite-height tower of that restricted kind. The conclusion does not apply
to unrestricted type-G towers, because raw type-G imposes no Borel requirement on deepest-level finite-transcript post-processing.
\end{remark}

\subsubsection{Continuous Base: Baire-Class Height}

\begin{definition}[Baire classes (finite levels)]\label{def:baire-classes}
Let $X$ be a topological space and $Y$ a metric space.
Define inductively
\begin{itemize}
\item $\mathsf{BC}_0(X,Y)$ is the class of continuous maps $X\to Y$.
\item $\mathsf{BC}_{n+1}(X,Y)$ is the class of pointwise limits of sequences of maps in $\mathsf{BC}_n(X,Y)$.
\end{itemize}
\end{definition}

\begin{theorem}[Continuous-base towers yield finite Baire class]\label{thm:cont-tower-baireclass}
Let $\mathcal{P}_\tau$ be a topologized SCI problem.
If $\Xi$ is computed by a height-$k$ $\Cont$-tower in the sense of \cref{def:R-tower} with $r=\Cont$,
then $\Xi\in \mathsf{BC}_k(\Omega,\mathcal{M})$. If moreover $(\Omega,\tau)$ is Polish, then $\Xi$ is Borel measurable.
\end{theorem}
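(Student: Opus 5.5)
The plan is an induction on the height $k$, following the pattern of the proof of \cref{thm:borelTowersBorelTargets}. The only change is that the inductive statement is phrased in terms of $\mathsf{BC}_j$ instead of Borel measurability. Concretely, for a height-$k$ $\Cont$-tower with deepest-level maps $\Gamma_{n_k,\dots,n_1}$, I will prove the following claim for $0\le j\le k$:

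\emph{for every fixed outer index tuple, the intermediate map $\Gamma_{n_k,\dots,n_{j+1}}$, obtained after performing $j$ limits, lies in $\mathsf{BC}_j(\Omega,\mathcal M)$.}

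The case $j=k$ is the statement $\Xi\in\mathsf{BC}_k(\Omega,\mathcal M)$.

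\textbf{Base case $j=0$.} The maps in question are the deepest-level maps themselves. By \cref{def:R-tower} with $r=\Cont$, each of them belongs to $\Reg_{\Cont}(\Omega,\mathcal M)$, i.e.\ is continuous from $(\Omega,\tau)$ to $(\mathcal M,d)$. This is exactly $\mathsf{BC}_0(\Omega,\mathcal M)$.

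\textbf{Inductive step $j-1\to j$.} Fix the outer indices $n_k,\dots,n_{j+1}$. By \cref{def:standard-tower}, the intermediate limits exist at every $A\in\Omega$, and
\[
\Gamma_{n_k,\dots,n_{j+1}}(A)=\lim_{n_j\to\infty}\Gamma_{n_k,\dots,n_{j}}(A).
\]
By the induction hypothesis, each map in the sequence $(\Gamma_{n_k,\dots,n_j})_{n_j\in\N}$ lies in $\mathsf{BC}_{j-1}$. Since the sequence converges pointwise on all of $\Omega$, the limit lies in $\mathsf{BC}_j$ directly by \cref{def:baire-classes}.

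The one point needing care is this pointwise existence. Strictly, \cref{def:standard-tower} only asserts that the full iterated limit exists. However, its recursive formulation defines the intermediate maps ``whenever the displayed limits exist'', and the iterated limit is meaningful only if every inner limit exists for every $A$ and every outer index. I will state this reading explicitly at the start of the proof so that all intermediate maps are total on $\Omega$. The intermediate maps need not be general algorithms, but this plays no role here: only their Baire class matters.

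\textbf{Polish clause.} This reduces to \cref{thm:borelTowersBorelTargets}, since every continuous map is Borel and so a $\Cont$-tower is in particular a tower with Borel deepest-level maps. Alternatively, one can induct on $j$ using \cref{lem:pointwise-limit-borel}: $\mathsf{BC}_0$ consists of Borel maps, and pointwise limits of Borel maps are Borel, so every class $\mathsf{BC}_j$ consists of Borel maps. The Polish hypothesis is not actually used by this argument, and I will note that it is only included to match the standard descriptive setting.

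I do not expect a genuine obstacle. The only delicate step is the bookkeeping of the intermediate limits just described.
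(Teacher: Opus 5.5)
Your proof is correct and is essentially the paper's argument. The paper also inducts on the height, peeling off the outermost limit and applying the induction hypothesis to each $\Gamma_{n_k}$ as a height-$(k-1)$ $\Cont$-tower, which is your inside-out induction in different bookkeeping; it likewise obtains the Borel clause from the fact that finite Baire class maps are Borel, and your remark that the Polish hypothesis is never actually used (since \cref{lem:pointwise-limit-borel} holds over any measurable space) is also correct.
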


\begin{proof}
We argue by induction on $k$.

For $k=0$, $\Xi$ is a continuous general algorithm, hence $\Xi\in\mathsf{BC}_0$ by \cref{def:baire-classes}.

Assume the claim for $k-1$ and let $\Xi$ be computed by a height-$k$ $\Cont$-tower.
Unwinding the tower definition, there exist maps $\Gamma_{n_k}:\Omega\to\mathcal{M}$ with
\[
\Xi(\omega)=\lim_{n_k\to\infty}\Gamma_{n_k}(\omega)
\quad\text{for all }\omega\in\Omega,
\]
and each $\Gamma_{n_k}$ is itself computed by a height-$(k-1)$ $\Cont$-tower (the inner limits).
By the induction hypothesis, each $\Gamma_{n_k}\in \mathsf{BC}_{k-1}(\Omega,\mathcal{M})$.
Hence $\Xi$ is a pointwise limit of a sequence in $\mathsf{BC}_{k-1}$, i.e.\ $\Xi\in \mathsf{BC}_k$.

If moreover $(\Omega,\tau)$ is Polish, then every finite Baire class map is Borel measurable,
hence $\Xi$ is Borel measurable.
\end{proof}

The preceding results explain the logical effect of imposing regularity at the deepest
level of an SCI tower. Borel regularity recovers descriptive-set lower-bound methods,
while continuous regularity locates the target map in a finite Baire class. The next
section shows that the raw hierarchy itself is nontrivial: there are canonical source
problems of every finite raw type-\(G\) height.

\subsubsection{Continuous Weihrauch To \texorpdfstring{$\Limi{n}$}{limn} And Borel Levels}

\begin{theorem}[Continuous reducibility to $\Limi{n}$ characterizes Borel measurability level]\label{thm:bgp-relativized}
Let \(f\) be a single-valued problem between represented spaces for which the usual
notions of effective and relativized \(\Sigma^0_{n+1}\)-measurability are defined, for
instance between represented Polish spaces or admissibly represented countably based
spaces. Let \(n\in\mathbb N_0\).
\begin{enumerate}
\item $f\le_{\mathrm W}\Limi{n}$ if and only if $f$ is \textit{effectively} $\Sigma^0_{n+1}$-measurable.
\item For an oracle $p\in\mathbb{N}^{\mathbb{N}}$, $f\le_{\mathrm W}^{\,p}\Limi{n}$ if and only if
$f$ is $\Sigma^0_{n+1}$-measurable \textit{relative to $p$}.
\item Equivalently, $f\le_{\mathrm W}^{\mathrm{cont}}\Limi{n}$ if and only if $f$ is (non-effective)
$\Sigma^0_{n+1}$-measurable.
\end{enumerate}

At level $n=0$ one has
\[
f \le_{\mathrm W} \Limi{0}
\quad\Longleftrightarrow\quad
f \le_{\mathrm W} \mathrm{id}
\quad\Longleftrightarrow\quad
f \text{ is computable},
\]
and
\[
f \le_{\mathrm W}^{\mathrm{cont}} \Limi{0}
\quad\Longleftrightarrow\quad
f \le_{\mathrm W}^{\mathrm{cont}} \mathrm{id}
\quad\Longleftrightarrow\quad
f \text{ is continuous}.
\]
\end{theorem}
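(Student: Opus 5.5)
The plan is to reduce everything to the Brattka–Gherardi–Pauly characterization theorem \cite[Thm.~6.5]{brattka2021weihrauch} and its relativization, so that only bookkeeping about oracles and the degenerate level $n=0$ remains.

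\textbf{Item (1).} For $n\ge1$ this is exactly the cited characterization: $f\le_{\mathrm W}\Limi{n}$ iff $f$ is effectively $\Sigma^0_{n+1}$-measurable, in the represented Polish or admissibly represented countably based setting assumed in the statement. I would check that the paper's $\Limi{n}$ (iterated composition on Baire space, \cref{def:lim-iterates}) is Weihrauch-equivalent to the $\lim^{(n)}$ used there. By \cref{lem:limntranspcyl} and \cref{thm:pure-tower-normal-form}, reductions to $\Limi{n}$ have the normal form $\Limi{n}\circ K\preceq\mathcal N_f$, which matches the ``computable $n$-limit'' formulation in the source.

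\textbf{Item (2).} Every step of the proof of (1) is a construction uniform in computable functionals, so relativizing all computability notions to an oracle $p$ yields (2) verbatim. Concretely, $\le_{\mathrm W}^{p}$ uses $p$-computable $H,K$, and ``effectively $\Sigma^0_{n+1}$'' becomes ``$\Sigma^0_{n+1}$ relative to $p$'', i.e.\ preimages of basic opens are $\Sigma^0_{n+1}(p)$ uniformly.

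\textbf{Item (3).} This follows from (2) together with two standard facts.

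First, $f\le^{\mathrm{cont}}_{\mathrm W}g$ iff $f\le_{\mathrm W}^{p}g$ for some $p$. A continuous partial map on Baire space is computable relative to an oracle coding its modulus, and two oracles can be joined into one.

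Second, $f$ is $\Sigma^0_{n+1}$-measurable iff it is $\Sigma^0_{n+1}(p)$-measurable for some $p$. In the forward direction, $f$ has countably many preimages of basic open sets, each a $\Sigma^0_{n+1}$ set with a Borel code; coding that countable family into a single oracle $p$ gives uniform $\Sigma^0_{n+1}(p)$ codes. The backward direction is immediate, since boldface contains all lightface-relative classes.

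I expect this second equivalence to be the main obstacle. The difficulty is not depth but precision: one needs countable bases in both spaces, and the set-level notion of measurability must be translated into the code-level, uniform one. I would lean on the hypothesis that the spaces are represented Polish or admissibly countably based.

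\textbf{The case $n=0$.} Here $\Limi{0}=\mathrm{id}$, so this case is handled directly rather than through the cited theorem.

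If $f\le_{\mathrm W}\mathrm{id}$, then $p\mapsto K\langle p,H(p)\rangle$ is a computable realizer of $f$. Conversely, a computable realizer $F$ gives a reduction with $H=\mathrm{id}$ and $K\langle p,q\rangle=F(p)$. Hence $f\le_{\mathrm W}\mathrm{id}$ iff $f$ is computable.

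The same argument with continuous $H,K$ shows $f\le^{\mathrm{cont}}_{\mathrm W}\mathrm{id}$ iff $f$ has a continuous realizer. For admissible representations, having a continuous realizer is equivalent to continuity of $f$, by the Kreitz–Weihrauch theorem. This is consistent with effective $\Sigma^0_1$-measurability, meaning preimages of opens are uniformly effectively open.
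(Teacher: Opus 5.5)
Your proposal is correct and follows the paper's route: cite \cite[Theorem~6.5]{brattka2021weihrauch} and its oracle relativization for (1)--(2), obtain (3) from the equivalence between continuous and oracle-relative Weihrauch reducibility, and handle $n=0$ directly via $\Limi{0}=\mathrm{id}$. Your version is more explicit than the paper's proof on two points it leaves implicit: identifying boldface $\Sigma^0_{n+1}$-measurability with $\Sigma^0_{n+1}(p)$-measurability for some oracle $p$, which is needed to pass from (2) to (3), and using admissibility to equate continuous realizability with continuity at $n=0$.
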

\begin{proof}
The case $n\!=\!0$ is immediate from $\Limi{0}=\mathrm{id}$. (1)-(2) is exactly \cite[Theorem~6.5]{brattka2021weihrauch}, together with its standard oracle relativization.
(3) follows from the well-known equivalence between continuous Weihrauch reducibility and
computable reducibility relative to some oracle.
\end{proof}

In summary we can now say

\begin{itemize}
\item The \textit{continuous} side is robust: \enquote{height $n$ with continuous base} forces (at most) Baire class $n$ (\cref{thm:cont-tower-baireclass}), and in the standard single-valued represented-Polish setting, continuous Weihrauch reducibility to \(\Limi{n}\) characterizes exactly the corresponding $\Sigma^0_{n+1}$-measurability level (\cref{thm:bgp-relativized}).

\item The \textit{Borel} restriction captures the minimal premise behind many lower bounds:
Borel base maps imply Borel targets (\cref{thm:borelTowersBorelTargets}), hence non-Borel acceptance sets block \textit{all} finite-height Borel/effective towers.

\item In contrast, the unrestricted type-G model drops the Borel/continuous premise.
Therefore descriptive-set hardness alone does not imply any type-G lower bound; see \cref{thm:finite-factor-scig0} and \cref{thm:scig0-but-tteinf}.
\end{itemize}

Thus the \enquote{intermediate hierarchy} is mathematically concrete: it is the two-parameter hierarchy in \cref{thm:two-parameter-hierarchy}, and its interaction with Weihrauch theory is governed by the measurability characterizations around $\Limi{n}$ (cf. \cite[Theorem 6.5]{brattka2021weihrauch}). For implemented SCI readings, the logical role of this hierarchy is analyzed in \cref{sec:uniformity-minimality}: raw type-\(G\) is too permissive, all-level finite-information towers are too restrictive, and the represented pure \(\mathcal R\)-\(\Lim\) normal form supplies the correct Weihrauch-comparable uniformity condition.

\section{Canonical Source Problems For Finite Type-G Heights}{\label{sec:ExampleSCI}}

This section gives a source family realizing arbitrary finite heights for the standard
raw type-G SCI of \cref{def:standard-tower}. These are raw extensional height statements, not claims of Weihrauch or Type-2 computability. The source problems serve two purposes. First, they show that the standard raw type-\(G\) hierarchy has nontrivial finite levels. Second, they provide
canonical hard sources for later finite-query transport arguments in concrete analytic or operator-theoretic settings.

\subsection{The Cantor-Matrix Input Space}

\begin{definition}[Cantor-matrix source space]
Let $\Omega_1:=2^{\mathbb N\times\mathbb N}$ equipped with the product topology. For \((a,b)\in\mathbb N^2\), define the coordinate evaluation
\[
\operatorname{ev}_{a,b}:\Omega_1\to\{0,1\}, \,
\operatorname{ev}_{a,b}(A):=A(a,b).
\]
Let
\[
\Lambda_{\mathrm{mat}} := \{\operatorname{ev}_{a,b}:(a,b)\in\mathbb N^2\}.
\]
For each \(m\in \mathbb{N}\), fix a bijection
\[
\beta_m:\mathbb N^m\to\mathbb N^2.
\]
\end{definition}

\begin{definition}[Alternating matrix predicates]
For \(m\in \mathbb{N}\), let
\[
Q_i=
\begin{cases}
\exists,&i\text{ odd},\\
\forall,&i\text{ even}.
\end{cases}
\]
Define
\[
\Xi_m:\Omega_1\to\{0,1\}
\]
by $\Xi_m(A)=1$, if and only if
\[
(Q_1 n_1)(Q_2 n_2)\cdots(Q_m n_m)
\bigl[A(\beta_m(n_1,\ldots,n_m))=1\bigr].
\]
The associated SCI computational problem is
\[
\mathcal P_m^{\mathrm{mat}} := (\Xi_m,\Omega_1,(\{0,1\},d_{\mathrm{disc}}),\Lambda_{\mathrm{mat}}),
\]
where $d_{\mathrm{disc}}$ denotes the discrete metric.
\end{definition}

\subsection{Base Maps Over Coordinate Oracles Are Clopen}

\begin{lemma}[Type-G base maps over coordinate oracles are clopen]\label{lem:coordinate-base-clopen}
Let
\[
\Gamma:\Omega_1\to\{0,1\}
\]
be a type-G general algorithm with respect to the coordinate oracle \(\Lambda_{\mathrm{mat}}\). Then $\Gamma^{-1}(\{1\})$ is clopen in \(\Omega_1\).
\end{lemma}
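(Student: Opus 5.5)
The plan is to show that $\Gamma$ is locally constant on $\Omega_1$. A locally constant map into the discrete space $\{0,1\}$ is continuous, so both $\Gamma^{-1}(\{1\})$ and $\Gamma^{-1}(\{0\})$ are open. Since they are complements of each other, each is clopen.

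Fix $A\in\Omega_1$ and consider its finite query set $\Lambda_\Gamma(A)=\{\operatorname{ev}_{a_1,b_1},\dots,\operatorname{ev}_{a_m,b_m}\}$, which is finite by \cref{def:gen-alg}. Define the basic cylinder
\[
U_A:=\{B\in\Omega_1:\ B(a_i,b_i)=A(a_i,b_i)\text{ for } i=1,\dots,m\}.
\]
This set is open in the product topology, since it is a finite intersection of preimages of singletons under coordinate projections into the discrete space $\{0,1\}$. It also contains $A$. For every $B\in U_A$ we have $f(B)=f(A)$ for all $f\in\Lambda_\Gamma(A)$. The finite-information dependence axiom, condition (1) of \cref{def:gen-alg}, then gives $\Gamma(B)=\Gamma(A)$. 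Hence $\Gamma$ is constant on the open neighbourhood $U_A$ of $A$.

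It follows that $\Gamma^{-1}(\{1\})=\bigcup_{A\in\Gamma^{-1}(\{1\})}U_A$ is open, and likewise $\Gamma^{-1}(\{0\})$ is open. Since $\Gamma^{-1}(\{1\})=\Omega_1\setminus\Gamma^{-1}(\{0\})$, it is also closed, and therefore clopen.

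There is no real obstacle here. The only point to check is that the argument uses only condition (1) of \cref{def:gen-alg}, applied at the input $A$ itself, and never the stability condition (2). Adaptivity of the query set is therefore harmless, because each point gets its own neighbourhood from its own queries. One could add, using compactness of $\Omega_1$, that finitely many of the $U_A$ cover $\Omega_1$, so $\Gamma$ depends on only finitely many coordinates overall. This is not needed for the clopen claim.
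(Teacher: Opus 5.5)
Your proposal is correct and matches the paper's proof essentially step for step. Both attach to each $A$ the basic cylinder determined by the finitely many coordinates in $\Lambda_\Gamma(A)$, use finite-information dependence (condition (1) of \cref{def:gen-alg}) to see that $\Gamma$ is constant there, and conclude that $\Gamma^{-1}(\{1\})$ and its complement are both open.
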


\begin{proof}
Let \(A\in\Omega_1\). By the definition of a general algorithm, the query policy assigns to \(A\) a finite set
\[
\Lambda_\Gamma(A)\subseteq\Lambda_{\mathrm{mat}}.
\]
Equivalently, there is a finite set of matrix coordinates
\[
D_A\subseteq\mathbb N^2
\]
such that
\[
\Lambda_\Gamma(A)=\{\operatorname{ev}_{a,b}:(a,b)\in D_A\}.
\]
Define the basic cylinder
\[
N_A:= \{B\in\Omega_1:B(a,b)=A(a,b)\text{ for all }(a,b)\in D_A\}.
\]
This set is clopen. By finite-information dependence in the definition of a general algorithm, if \(B\in N_A\), then
\[
\Gamma(B)=\Gamma(A).
\]
Indeed, \(B\) agrees with \(A\) on all queries used by the algorithm at \(A\). Therefore
\[
A\in\Gamma^{-1}(\{1\}) \quad\Longrightarrow\quad N_A\subseteq\Gamma^{-1}(\{1\}),
\]
and
\[
A\notin\Gamma^{-1}(\{1\}) \quad\Longrightarrow\quad N_A\subseteq\Omega_1\setminus\Gamma^{-1}(\{1\}).
\]
Thus both \(\Gamma^{-1}(\{1\})\) and its complement are open. Hence \(\Gamma^{-1}(\{1\})\) is clopen.
\end{proof}

\begin{lemma}[Height \(k\) coordinate towers yield \(\Delta^0_{k+1}\) sets]\label{lem:height-k-delta}
Let
\[
\Theta:\Omega_1\to\{0,1\}
\]
be computed by a type-G tower of height \(k\) over the coordinate oracle \(\Lambda_{\mathrm{mat}}\). Then
\[
\Theta^{-1}(\{1\})\in\Delta^0_{k+1}(\Omega_1).
\]
\end{lemma}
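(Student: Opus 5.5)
We argue by induction on the height \(k\), proving the stronger statement that every intermediate map of the tower has \(\Delta^0_{j+1}\) acceptance set at the appropriate depth. Since the target space is \(\{0,1\}\) with the discrete metric, pointwise convergence of a sequence of \(\{0,1\}\)-valued maps means that the sequence is eventually constant at every point. This is what makes the Borel-class bookkeeping sharp. The mere Borel conclusion of \cref{lem:pointwise-limit-borel} is too coarse here, because its proof loses quantifiers through the \(U_r\) construction.

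\textbf{Base case \(k=0\).} Here \(\Theta\) is a single general algorithm, so by \cref{lem:coordinate-base-clopen} its acceptance set is clopen, i.e.\ in \(\Delta^0_1(\Omega_1)\).

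\textbf{Inductive step.} Fix \(k\ge1\). By \cref{def:standard-tower}, \(\Theta(A)=\lim_{n_k\to\infty}\Gamma_{n_k}(A)\). For each fixed \(n_k\), the family \((\Gamma_{n_k,n_{k-1},\ldots,n_1})_{n_{k-1},\ldots,n_1}\) is a type-G tower of height \(k-1\) computing \(\Gamma_{n_k}\) over the same coordinate oracle. The limits exist because the full iterated limit is required to exist; this is the same unwinding used in \cref{thm:cont-tower-baireclass}. By the induction hypothesis, each set \(S_n:=\Gamma_n^{-1}(\{1\})\) lies in \(\Delta^0_k\). Since the sequence \((\Gamma_n(A))_n\) is eventually constant in \(\{0,1\}\), we get
\[
\Theta^{-1}(\{1\})=\bigcup_{N}\bigcap_{n\ge N}S_n=\bigcap_{N}\bigcup_{n\ge N}S_n .
\]
The first expression is a countable union of countable intersections of \(\Delta^0_k\subseteq\Pi^0_k\) sets, hence it is \(\Sigma^0_{k+1}\). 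The second is a countable intersection of countable unions of \(\Delta^0_k\subseteq\Sigma^0_k\) sets, hence it is \(\Pi^0_{k+1}\). Therefore \(\Theta^{-1}(\{1\})\in\Delta^0_{k+1}(\Omega_1)\).

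The only delicate point is the use of the closure properties of the finite Borel classes on the Polish (indeed zero-dimensional) space \(\Omega_1\). A countable intersection of \(\Pi^0_k\) sets is \(\Pi^0_k\), and a countable union of \(\Pi^0_k\) sets is \(\Sigma^0_{k+1}\); dually for \(\Sigma^0_k\). These facts are standard, following \cite{Kec95}. For \(k=1\) they reduce to the statement that countable unions of clopen sets are open, which holds directly.

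The step requiring care is justifying that each \(\Gamma_{n_k}\) is genuinely computed by a height-\((k-1)\) tower. This needs the existence of every inner limit at every point, not merely of the outermost limit. I would read this directly off the recursive definition of the intermediate maps in \cref{def:standard-tower}, where the existence of the iterated limit presupposes the existence of all inner limits.
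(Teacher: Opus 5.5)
Your proof is correct and matches the paper's: induction on $k$, the clopen base case from \cref{lem:coordinate-base-clopen}, and the eventual-constancy identity $\Theta^{-1}(\{1\})=\bigcup_N\bigcap_{n\ge N}S_n$ giving $\Sigma^0_{k+1}$. The only difference is cosmetic: you obtain the $\Pi^0_{k+1}$ bound from the form $\bigcap_N\bigcup_{n\ge N}S_n$, whereas the paper writes the complement as $\bigcup_N\bigcap_{n\ge N}(\Omega_1\setminus S_n)$ and shows it is $\Sigma^0_{k+1}$.
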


\begin{proof}
We prove this by induction on \(k\). For \(k=0\), the map \(\Theta\) is a single type-G base algorithm. By \cref{lem:coordinate-base-clopen}, its acceptance set is clopen, hence belongs to \(\Delta^0_1\).

Assume the result holds for towers of height \(k-1\). Let \(\Theta\) be computed by a height-\(k\) tower. Then there are maps
\[
\Theta_n:\Omega_1\to\{0,1\}
\]
such that
\[
\Theta(A)=\lim_{n\to\infty}\Theta_n(A)
\]
for $A\in\Omega_1$, and each \(\Theta_n\) is computed by a height-\((k-1)\) tower. By the induction hypothesis,
\[
A_n:=\Theta_n^{-1}(\{1\})\in\Delta^0_k(\Omega_1).
\]
Since the target \(\{0,1\}\) is discrete, pointwise convergence is eventual constant. Therefore
\[
\Theta^{-1}(\{1\}) = \bigcup_{N=1}^\infty \bigcap_{n\ge N} A_n.
\]
Because each \(A_n\in\Delta^0_k\), the right-hand side belongs to \(\Sigma^0_{k+1}\).

Similarly,
\[
\Omega_1\setminus\Theta^{-1}(\{1\}) = \bigcup_{N=1}^\infty \bigcap_{n\ge N}(\Omega_1\setminus A_n),
\]
and each \(\Omega_1\setminus A_n\in\Delta^0_k\). Hence the complement also belongs to \(\Sigma^0_{k+1}\). Thus
\[
\Theta^{-1}(\{1\})\in\Delta^0_{k+1}.
\]
\end{proof}

\subsection{Borel Complexity Of The Alternating Predicates}

\begin{lemma}[\(\Sigma^0_m\)-completeness]\label{lem:Xi-m-complete}
For every \(m\in \mathbb{N}\),
\[
\Xi_m^{-1}(\{1\})
\]
is \(\Sigma^0_m\)-complete in the Cantor space \(\Omega_1\). In particular,
\[
\Xi_m^{-1}(\{1\})\notin\Delta^0_m(\Omega_1).
\]
\end{lemma}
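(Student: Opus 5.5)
The proof has two halves: an upper bound showing membership in \(\Sigma^0_m\), and a lower bound showing completeness. The final claim then follows from the hierarchy theorem.

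\textbf{Upper bound.} Membership in \(\Sigma^0_m\) follows by induction on the quantifier prefix. For a fixed tuple \((n_1,\ldots,n_m)\), the set
\[
\{A\in\Omega_1 : A(\beta_m(n_1,\ldots,n_m))=1\}
\]
is a basic clopen cylinder, hence it lies in \(\Delta^0_1\). Reading the quantifiers from the inside out, each \(\exists n_i\) is a countable union and each \(\forall n_i\) is a countable intersection. The innermost quantifier \(Q_m\) therefore produces a set in \(\Sigma^0_1\) or \(\Pi^0_1\), according to the parity of \(m\), and the following alternating quantifiers climb one level each. Since \(Q_1=\exists\), the outermost step is a countable union, so
\[
\Xi_m^{-1}(\{1\})\in\Sigma^0_m(\Omega_1).
\]

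\textbf{Hardness.} For the lower bound I would use the Wadge-style characterisation, treating \(\Sigma^0_m\)-hardness by continuous reductions from subsets of zero-dimensional Polish spaces (Kechris §22). The key fact is that every \(\Sigma^0_m\) subset \(S\) of Cantor space \(2^{\mathbb N}\) (or of \(\Omega_1\), which is homeomorphic to it) can be written as
\[
x\in S \iff (Q_1 n_1)\cdots(Q_m n_m)\,\bigl[x\in C_{n_1\ldots n_m}\bigr],
\]
where each \(C_{\vec n}\) is clopen. This is again proved by induction. A \(\Sigma^0_m\) set is a countable union of \(\Pi^0_{m-1}\) sets. The base case is that an open set is a countable union of basic clopen sets, and closed sets are countable intersections of clopen sets.

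Given such a normal form, the reduction is
\[
\phi(x)(\beta_m(\vec n)) := \mathbf 1_{C_{\vec n}}(x).
\]
This map is continuous, because each coordinate of \(\phi(x)\) is a clopen, hence continuous, function of \(x\). By construction, \(x\in S\) if and only if \(\Xi_m(\phi(x))=1\). Since \(\beta_m\) is a bijection, every matrix entry is assigned exactly once, so \(\phi\) is well defined.

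For arbitrary zero-dimensional Polish domains, the same normal form holds with clopen sets, because such spaces have a countable clopen basis. This is the step that needs the most care, and I would cite Kechris rather than redo it.

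\textbf{The ``in particular'' claim.} Completeness implies that the set is not in \(\Pi^0_m\). Otherwise every \(\Sigma^0_m\) subset of Cantor space would be \(\Pi^0_m\), being a continuous preimage of a \(\Pi^0_m\) set. That contradicts the hierarchy theorem, which gives a \(\Sigma^0_m\) set that is not \(\Pi^0_m\) in uncountable Polish spaces (Kechris 22.10), proved via universal sets and diagonalisation. Since \(\Delta^0_m=\Sigma^0_m\cap\Pi^0_m\), the set is not in \(\Delta^0_m(\Omega_1)\).

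I expect the main obstacle to be bookkeeping rather than ideas: matching the parity convention for the \(Q_i\) with the \(\Sigma/\Pi\) levels, and citing the hierarchy theorem correctly.
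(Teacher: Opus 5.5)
Your proposal is correct and follows the paper's proof step for step. Membership comes from clopen atomic conditions closed under the alternating unions and intersections; hardness comes from the clopen quantifier normal form for $\Sigma^0_m$ sets, turned into the continuous coordinatewise reduction $\phi(x)(\beta_m(\vec n))=\mathbf 1_{C_{\vec n}}(x)$; and the $\Delta^0_m$ claim comes from strictness of the finite Borel hierarchy. The differences are cosmetic: you derive the normal form by a short induction, valid on any zero-dimensional Polish domain, where the paper cites it for $2^{\mathbb N}$, and you make explicit the closure of $\Pi^0_m$ under continuous preimages, which the paper's final sentence leaves implicit.
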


\begin{proof}
First, membership in \(\Sigma^0_m\) follows directly from the definition of \(\Xi_m\). The atomic condition
\[
A(\beta_m(n_1,\ldots,n_m))=1
\]
is clopen. Alternating countable unions and intersections beginning with a countable union give a \(\Sigma^0_m\) set.

We prove hardness using the standard normal form theorem for Borel sets in zero-dimensional Polish spaces (follows from the existence of universal sets for the finite
Borel hierarchy, see, for example, \cite[Theorem~3.6.6]{Sri98}): for every \(\Sigma^0_m\) set \(B\subseteq 2^{\mathbb N}\) there is a clopen predicate
\[
R(y;n_1,\ldots,n_m)
\]
such that
\[
y\in B \quad\Longleftrightarrow\quad (Q_1n_1)\cdots(Q_mn_m)\,R(y;n_1,\ldots,n_m).
\]
Define
\[
\Theta:2^{\mathbb N}\to\Omega_1
\]
coordinatewise by
\[
\Theta(y)(\beta_m(n_1,\ldots,n_m)) :=
\begin{cases}
1,&R(y;n_1,\ldots,n_m),\\
0,&\text{otherwise}.
\end{cases}
\]
Each coordinate of \(\Theta\) is continuous because \(R(\,\cdot\,;n_1,\ldots,n_m)\) is clopen. Hence \(\Theta\) is continuous as a map into the product space \(\Omega_1\). By construction,
\[
y\in B \quad\Longleftrightarrow\quad \Theta(y)\in\Xi_m^{-1}(\{1\}).
\]
Thus every \(\Sigma^0_m\) subset of \(2^{\mathbb N}\) continuously reduces to \(\Xi_m^{-1}(\{1\})\), proving \(\Sigma^0_m\)-hardness. Therefore \(\Xi_m^{-1}(\{1\})\) is \(\Sigma^0_m\)-complete.

The strictness of the finite Borel hierarchy on Cantor space implies that a \(\Sigma^0_m\)-complete set is not in \(\Delta^0_m\).
\end{proof}

\subsection{Exact Type-G Height}

\begin{lemma}[Finite Boolean combinations preserve tower height]\label{lem:boolean-combinations-same-height}
Let \(\Theta_1,\ldots,\Theta_N:\Omega_1\to\{0,1\}\) be decision maps each computed by a type-G tower of height at most \(k\). Then every Boolean combination of \(\Theta_1,\ldots,\Theta_N\) is computed by a type-G tower of height at most \(k\).
\end{lemma}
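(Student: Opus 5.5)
The plan is to reduce to towers of a common height and then combine them coordinatewise at the deepest level. Fix a Boolean function \(b:\{0,1\}^N\to\{0,1\}\) and consider \(\Theta:=b\circ(\Theta_1,\ldots,\Theta_N)\). The case \(k=0\) will fall out of the same construction with no indices.

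\textbf{Step 1 (padding).} I will first show that a tower of height \(j\le k\) can be regarded as a tower of height exactly \(k\). Given deepest-level maps \(\Gamma_{n_j,\ldots,n_1}\), set
\[
\widetilde\Gamma_{n_k,\ldots,n_1}:=\Gamma_{n_k,\ldots,n_{k-j+1}},
\]
which ignores the innermost \(k-j\) indices, and keep the same query policy. For \(j=0\) the outputs are constant in all indices. Each \(\widetilde\Gamma\) is still a general algorithm. The innermost \(k-j\) limits are limits of constant sequences. Hence every intermediate map exists and the iterated limit equals \(\Theta_i\). After this step all \(N\) towers \((\Gamma^{i}_{n_k,\ldots,n_1})\), \(i=1,\ldots,N\), have height exactly \(k\).

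\textbf{Step 2 (combined base maps).} I will define
\[
\Gamma_{n_k,\ldots,n_1}(A):=b\bigl(\Gamma^1_{n_k,\ldots,n_1}(A),\ldots,\Gamma^N_{n_k,\ldots,n_1}(A)\bigr),
\qquad
\Lambda_{\Gamma}(A):=\bigcup_{i=1}^N\Lambda_{\Gamma^i}(A),
\]
where the \(\Lambda_{\Gamma^i}\) are the query policies of the \(\Gamma^i_{n_k,\ldots,n_1}\). This union is finite and nonempty. To check the two conditions of \cref{def:gen-alg}, suppose \(B\) agrees with \(A\) on \(\Lambda_\Gamma(A)\). Then \(B\) agrees with \(A\) on each \(\Lambda_{\Gamma^i}(A)\). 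Finite-information dependence and query-set stability for each \(\Gamma^i\) give \(\Gamma^i(B)=\Gamma^i(A)\) and \(\Lambda_{\Gamma^i}(B)=\Lambda_{\Gamma^i}(A)\) for every \(i\). Therefore \(\Gamma(B)=\Gamma(A)\) and \(\Lambda_\Gamma(B)=\Lambda_\Gamma(A)\).

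\textbf{Step 3 (limits commute with \(b\)).} I will show by induction on \(r=1,\ldots,k\) that for every fixed choice of outer indices the level-\(r\) intermediate map of the combined tower exists. I will also show that it equals \(b\) applied to the level-\(r\) intermediate maps of the \(N\) component towers.

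The point is that in the discrete space \(\{0,1\}\) convergence means eventual constancy. If each of finitely many coordinate sequences is eventually constant, then the \(N\)-tuple is eventually constant, so \(b\) of it is eventually constant with the expected value. In other words, \(b\) is continuous on the discrete space \(\{0,1\}^N\), and finiteness of \(N\) is exactly what allows a common threshold to be chosen. At the top level this yields \(\lim_{n_k}\cdots\lim_{n_1}\Gamma_{n_k,\ldots,n_1}(A)=b(\Theta_1(A),\ldots,\Theta_N(A))=\Theta(A)\).

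\textbf{Main obstacle.} I expect the only delicate point to be bookkeeping rather than mathematics. One must make sure that the component towers' intermediate limits exist for \emph{all} outer index tuples, as the recursive form of \cref{def:standard-tower} requires, and not merely along the iterated path. One must also check that padding preserves this property. The first thing I would do is state the reading of \cref{def:standard-tower} explicitly: each intermediate map \(\Gamma_{n_k,\ldots,n_r}\) is defined for all indices. Under that reading both the padding in Step 1 and the induction in Step 3 go through directly.
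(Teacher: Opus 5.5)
Your proposal is correct and follows essentially the same route as the paper: pad all towers to common height $k$, take the union of the finite query sets at each fixed multi-index, apply the Boolean operation to the deepest-level outputs, and use eventual constancy in the discrete target $\{0,1\}$ to commute the iterated limits with that operation. The differences are cosmetic. The paper treats negation and conjunction (via products) separately and obtains the rest by De Morgan, and it pads with dummy outer rather than inner indices. You instead handle an arbitrary $b:\{0,1\}^N\to\{0,1\}$ directly, and both paddings work.
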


\begin{proof}
It suffices to treat negation and finite conjunctions. For negation, let \((\Gamma_{n_k,\ldots,n_1})\) be a height-\(k\) tower for \(\Theta\). Then
\[
        1-\Theta(A)
        =
        \lim_{n_k\to\infty}\cdots\lim_{n_1\to\infty}
        \bigl(1-\Gamma_{n_k,\ldots,n_1}(A)\bigr).
\]
For each deepest-level general algorithm \(\Gamma_{n_k,\ldots,n_1}\), the map
\[
        A\mapsto 1-\Gamma_{n_k,\ldots,n_1}(A)
\]
uses the same finite query set and then applies the finite post-processing \(b\mapsto 1-b\). Hence it is again a general algorithm.

For finite conjunctions, first pad all towers to height exactly \(k\) by adding dummy outer indices if necessary. For fixed multi-index \((n_k,\ldots,n_1)\), run the finitely
many deepest-level general algorithms for \(\Theta_1,\ldots,\Theta_N\) at the same
multi-index and query the union of their finite query sets. This union is finite.
Output the product of the obtained \(0/1\)-values. Agreement on the union of the
queried evaluations forces agreement for each constituent algorithm, hence forces
agreement of the product and of the constituent query sets.  Thus the product map is
again a general algorithm.

The iterated limits commute with finite products in the discrete target
\(\{0,1\}\), because convergence in \(\{0,1\}\) is eventual constancy at each limiting
stage. Hence the resulting tower computes the finite conjunction. Disjunctions follow
by De Morgan's law or by replacing product by maximum.
\end{proof}

\begin{theorem}[Exact finite type-G heights of the matrix source problems]\label{thm:Xi-m-exact-height}
For every \(m\in \mathbb{N}\),
\[
\mathrm{SCI}_G(\mathcal P_m^{\mathrm{mat}})=m.
\]
\end{theorem}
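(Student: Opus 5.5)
The plan is to establish matching lower and upper bounds, $\mathrm{SCI}_G(\mathcal P_m^{\mathrm{mat}})\ge m$ and $\le m$, separately.

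\textbf{Lower bound.} Suppose, for contradiction, that $\mathcal P_m^{\mathrm{mat}}$ has a type-G tower of height $k\le m-1$. Padding with dummy outer indices, as in the proof of \cref{lem:boolean-combinations-same-height}, we may assume $k=m-1$. \Cref{lem:height-k-delta} then gives $\Xi_m^{-1}(\{1\})\in\Delta^0_{m}(\Omega_1)$. This contradicts \cref{lem:Xi-m-complete}, which says this set is not in $\Delta^0_m$. Hence $\mathrm{SCI}_G\ge m$. (For $m=1$ this reads: a height-$0$ tower would make the $\Sigma^0_1$-complete set clopen, which is impossible.)

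\textbf{Upper bound.} I construct an explicit height-$m$ tower by induction on the quantifier depth, using truncation of the quantifiers. For $(N_m,\dots,N_1)\in\mathbb N^m$ define
\[
\Gamma_{N_m,\ldots,N_1}(A):=1 \iff (Q_1 n_1\le N_m)(Q_2 n_2\le N_{m-1})\cdots(Q_m n_m\le N_1)\bigl[A(\beta_m(n_1,\ldots,n_m))=1\bigr].
\]
The outermost quantifier is bounded by the outermost limit index and the innermost quantifier by the innermost index. This map queries only the fixed finite set of coordinates $\beta_m(n_1,\dots,n_m)$ with $n_i\le N_{m+1-i}$, and its value is a function of those answers. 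So it is a general algorithm with constant query set.

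The key step is to verify convergence. Fixing $n_1,\dots,n_{m-1}$, the innermost bounded quantifier $(Q_m n_m\le N_1)$ is monotone in $N_1$: an $\exists$ is nondecreasing and a $\forall$ is nonincreasing. It is therefore eventually constant in $\{0,1\}$, with limit equal to the unbounded quantifier $(Q_m n_m)$. Since only finitely many tuples $(n_1,\dots,n_{m-1})$ are involved for fixed outer indices, and the target is discrete, $\lim_{N_1\to\infty}$ commutes with the finite Boolean combination given by the outer bounded quantifiers. The limit is thus the same expression with $Q_m n_m$ unbounded.

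Iterating this argument for $N_2,\dots,N_m$ removes one bound at a time. At each stage the bounded quantifier is monotone in its bound and ranges over a predicate that is now a genuine $\{0,1\}$-valued statement independent of that bound. After $m$ limits we obtain exactly $\Xi_m(A)$. This yields a height-$m$ tower, so $\mathrm{SCI}_G\le m$.

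\textbf{Main obstacle.} The lower bound is immediate from the earlier lemmas, so the delicate point is the bookkeeping in the upper bound. The innermost limit must handle the innermost quantifier, and the existence of each intermediate limit must follow from monotonicity in the single active bound. I would state this as an inductive claim: after taking the first $j$ limits, the intermediate map equals the formula with the last $j$ quantifiers unbounded and the first $m-j$ bounded by $N_m,\dots,N_{j+1}$. Each inductive step then reduces to monotone convergence of a $\{0,1\}$-valued sequence, applied pointwise to finitely many instances.
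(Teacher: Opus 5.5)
Your proof is correct and follows the paper's argument. The lower bound is the same appeal to \cref{lem:height-k-delta} and \cref{lem:Xi-m-complete}. Your all-bounded-quantifier tower, with the innermost quantifier bounded by the innermost index, is exactly what the paper's backward induction over the tail predicates $\Xi_{m,r}$ produces when unwound; your direct monotone-convergence check over finitely many instances simply replaces the paper's appeal to \cref{lem:boolean-combinations-same-height}, and has the minor bonus of exhibiting one explicit fixed-query tower uniform in the indices.
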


\begin{proof}
We prove the upper and lower bounds separately.

\smallskip
\noindent\textbf{Upper bound:}
For a finite tuple \(s=(n_1,\ldots,n_r)\), \(0\le r\le m\), define the tail predicate
\[
\Xi_{m,r}(A;s)
\]
as follows: If \(r=m\), put
\[
\Xi_{m,m}(A;n_1,\ldots,n_m) := A(\beta_m(n_1,\ldots,n_m)).
\]
If \(r<m\), define recursively
\[
\Xi_{m,r}(A;s)=1
\]
if and only if
\[
(Q_{r+1} n_{r+1})\cdots(Q_m n_m) \bigl[A(\beta_m(n_1,\ldots,n_m))=1\bigr].
\]
Thus \(\Xi_m(A)=\Xi_{m,0}(A;\varnothing)\).

We prove by backward induction on \(r\) that \(\Xi_{m,r}(\,\cdot\,;s)\) is computable by a type-G tower of height at most \(m-r\), uniformly in the finite parameter \(s\).

For \(r=m\), the map
\[
A\mapsto A(\beta_m(n_1,\ldots,n_m))
\]
is a single coordinate query, hence a height-\(0\) general algorithm. Assume now \(r<m\). If \(Q_{r+1}=\exists\), then
\[
\Xi_{m,r}(A;s) = \lim_{N\to\infty} \max_{1\le n\le N}\Xi_{m,r+1}(A;s,n).
\]
If \(Q_{r+1}=\forall\), then
\[
\Xi_{m,r}(A;s) = \lim_{N\to\infty} \min_{1\le n\le N}\Xi_{m,r+1}(A;s,n).
\]
For each fixed \(N\), the finite maximum or minimum is a finite Boolean combination of maps computable by towers of height at most \(m-r-1\). By \cref{lem:boolean-combinations-same-height}, it is computable by a tower of height at most \(m-r-1\). The additional limit over \(N\) gives height at most \(m-r\). Taking \(r=0\) gives a height-\(m\) tower for \(\Xi_m\).

\smallskip
\noindent\textbf{Lower bound:}
Suppose, toward a contradiction, that
\[
\mathrm{SCI}_G(\mathcal P_m^{\mathrm{mat}})\le m-1.
\]
Then \(\Xi_m\) is computed by a type-G tower of height at most \(m-1\). By \cref{lem:height-k-delta},
\[
\Xi_m^{-1}(\{1\})\in\Delta^0_m(\Omega_1).
\]
This contradicts \cref{lem:Xi-m-complete}, which says that \(\Xi_m^{-1}(\{1\})\) is \(\Sigma^0_m\)-complete and therefore not in \(\Delta^0_m\).
Thus
\[
\mathrm{SCI}_G(\mathcal P_m^{\mathrm{mat}})\ge m.
\]
Combining both inequalities proves the theorem.
\end{proof}

\begin{definition}[Tagged union source problem]
Let
\[
\Omega_\infty:=\{(m,A) : m\in\mathbb N,\ A\in\Omega_1\}.
\]
Equip \(\Omega_\infty\) with the oracle consisting of the tag map
\[
\tau(m,A):=m
\]
together with all matrix-coordinate evaluations
\[
        \operatorname{ev}_{a,b}^{\infty}:\Omega_\infty\to\{0,1\}, \qquad
        \operatorname{ev}_{a,b}^{\infty}(m,A):=A(a,b).
\]

Set
\[
        \Lambda_\infty := \{\tau\}\cup \{\operatorname{ev}_{a,b}^{\infty} : (a,b)\in\mathbb N^2\}.
\]
Define
\[
\Xi_\infty(m,A):=\Xi_m(A).
\]
Let
\[
\mathcal P_\infty^{\mathrm{mat}} := (\Xi_\infty,\Omega_\infty,(\{0,1\},d_{\mathrm{disc}}),\Lambda_\infty)
\]
be the corresponding SCI computational problem.
\end{definition}

\begin{theorem}[Unbounded finite heights give infinite type-G height]\label{thm:Xi-infty-height}
One has
\[
\mathrm{SCI}_G(\mathcal P_\infty^{\mathrm{mat}})=\infty.
\]
\end{theorem}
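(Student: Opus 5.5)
Suppose, toward a contradiction, that $\mathcal P_\infty^{\mathrm{mat}}$ has a type-G tower of some finite height $k$, with deepest-level general algorithms $\Gamma_{n_k,\ldots,n_1}:\Omega_\infty\to\{0,1\}$. The plan is to restrict this tower to a single slice $\{m\}\times\Omega_1$ with $m=k+1$, obtain from it a height-$k$ tower for $\mathcal P_{k+1}^{\mathrm{mat}}$, and then contradict \cref{thm:Xi-m-exact-height}. Alternatively, one can contradict \cref{lem:height-k-delta} together with \cref{lem:Xi-m-complete} directly.

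\textbf{Step 1 (slice restriction).} Fix $m:=k+1$ and define
\[
\Gamma^{(m)}_{n_k,\ldots,n_1}(A):=\Gamma_{n_k,\ldots,n_1}(m,A),
\]
with query policy obtained by dropping the tag query $\tau$ from $\Lambda_{\Gamma}(m,A)$ and replacing each $\operatorname{ev}^\infty_{a,b}$ by $\operatorname{ev}_{a,b}$. If this leaves the query set empty, we add one dummy coordinate query so that it stays nonempty.

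\textbf{Step 2 (the restriction is a general algorithm).} Suppose $B\in\Omega_1$ agrees with $A$ on the translated coordinates. Then $(m,B)$ agrees with $(m,A)$ on every query in $\Lambda_\Gamma(m,A)$, because the tag query $\tau$ returns $m$ on both. Finite-information dependence and query stability in \cref{def:gen-alg} then give $\Gamma(m,B)=\Gamma(m,A)$ and $\Lambda_\Gamma(m,B)=\Lambda_\Gamma(m,A)$. The translated query sets are therefore also equal, so both axioms transfer. The dummy query does not interfere with this: it is a fixed function of the original query set, which is stable.

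\textbf{Step 3 (the restriction computes $\Xi_m$).} Iterated limits are evaluated pointwise, so
\[
\lim_{n_k\to\infty}\cdots\lim_{n_1\to\infty}\Gamma^{(m)}_{n_k,\ldots,n_1}(A)=\Xi_\infty(m,A)=\Xi_m(A)
\]
for every $A\in\Omega_1$. This is a height-$k$ type-G tower for $\mathcal P_{k+1}^{\mathrm{mat}}$, so $\mathrm{SCI}_G(\mathcal P_{k+1}^{\mathrm{mat}})\le k$, contradicting \cref{thm:Xi-m-exact-height}. Since $k$ was arbitrary, no finite-height tower exists and $\mathrm{SCI}_G(\mathcal P_\infty^{\mathrm{mat}})=\infty$.

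\textbf{Main obstacle.} The only delicate point is the bookkeeping in Step 2: showing that deleting the tag query preserves the axioms of \cref{def:gen-alg}, including the nonemptiness of query sets. Steps 1 and 3 are routine.
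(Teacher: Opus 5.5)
Your proof is correct and follows the same route as the paper: restrict a putative height-$k$ tower to the slice $\{k+1\}\times\Omega_1$, answer the tag query by the constant $k+1$, and obtain a height-$k$ tower for $\mathcal P_{k+1}^{\mathrm{mat}}$, contradicting \cref{thm:Xi-m-exact-height}. Your Step~2 spells out the query-policy bookkeeping that the paper compresses into the word \enquote{simulated}, including the dummy query that keeps query sets nonempty as \cref{def:gen-alg} requires.
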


\begin{proof}
Assume for contradiction that
\[
        \SCIG(\mathcal P_\infty^{\mathrm{mat}})=k<\infty.
\]
Restrict a height-\(k\) tower for \(\mathcal P_\infty^{\mathrm{mat}}\) to the slice
\[
        \Omega_{k+1}:=\{(k+1,A) : A\in\Omega_1\}.
\]
On this slice, the tag query \(\tau\) is constant with value \(k+1\), and every matrix
coordinate query \(\operatorname{ev}_{a,b}\) is exactly the corresponding coordinate
evaluation of \(A\). Therefore each deepest-level general algorithm in the restricted
tower is simulated by a deepest-level general algorithm over the coordinate oracle
\(\Lambda_{\mathrm{mat}}\): tag queries are answered by the fixed constant \(k+1\), and
coordinate queries are passed to the corresponding coordinate oracle on \(A\).

Thus the restricted tower is a height-\(k\) type-\(G\) tower computing \(\Xi_{k+1}\) on
\(\Omega_1\). This contradicts \cref{thm:Xi-m-exact-height}, which gives
\[
        \SCIG(\mathcal P_{k+1}^{\mathrm{mat}})=k+1.
\]
Hence no finite-height type-\(G\) tower computes \(\mathcal P_\infty^{\mathrm{mat}}\), and
\[
        \SCIG(\mathcal P_\infty^{\mathrm{mat}})=\infty.
\]
\end{proof}

\section{Uniformity, Minimality, And Pure \texorpdfstring{\(\mathcal R\)-\(\Lim\)}{R-Lim} Implemented SCI}\label{sec:uniformity-minimality}

\subsection{Representations, Atoms, And Regularity Classes}\label{sec:representations-atoms-regularity}
To answer the proposed question, we have to define first, what we actually mean by an computability model in general.

\begin{definition}[Computability model]\label{def:compmodel}
Fix a class of \textit{types} $\mathcal{T}$ and for each $X,Y\in\mathcal{T}$ a class $\mathbb{M}(X,Y)$
of morphisms (partial or multi-valued if desired).
We call $\mathbb{M}$ a \textit{computability model} if
\begin{enumerate}
\item (\textbf{Well-definedness}) Each $\mathbb{M}(X,Y)$ is a \textit{definite} set of partial or multivalued maps from $X$ to $Y$.
\item (\textbf{Identity}) $\id_X\in\mathbb{M}(X,X)$ for all $X$.
\item (\textbf{Composition}) If $f\in\mathbb{M}(X,Y)$ and $g\in\mathbb{M}(Y,Z)$ then $g\circ f\in\mathbb{M}(X,Z)$.
\item (\textbf{Basic data operations}) Pairing and projections for products are in $\mathbb{M}$ (at least at the level of names),
so that one can feed outputs of one procedure as inputs of another.
\end{enumerate}
\end{definition}

To have everything compact here, we also recall again, what a representation and realizer is.
\begin{definition}[Represented space and realizer]
A \textit{represented space} is $X=(|X|,\delta_X)$ where $\delta_X:\subseteq\Baire\to |X|$ is a partial surjection.
A partial function $F:\subseteq\Baire\to\Baire$ \textit{realizes} \(f:\subseteq |X|\rightrightarrows |Y|\), written \(F\vdash f\), if for every
\(p\in\operatorname{dom}(\delta_X)\) with \(\delta_X(p)\in\operatorname{dom}(f)\), one has \(F(p)\in\operatorname{dom}(\delta_Y)\) and
\[
        \delta_Y(F(p))\in f(\delta_X(p)).
\]
\end{definition}

\begin{definition}[Representation-induced topology]
Given $(X,\delta_X)$, define a topology $\tau_{\delta_X}$ on $|X|$ by declaring $U\subseteq |X|$ open iff
$\delta_X^{-1}(U)\subseteq\Baire$ is open (in the product topology on $\Baire$).
\end{definition}

Now we first recall a well known but for our purpose very crucial lemma.
\begin{lemma}[Type-2 computable implies being continuous]\label{lem:comp-implies-cont}
Let $f:|X|\to |Y|$ be Type-2 computable between represented spaces.
Then $f:(|X|,\tau_{\delta_X})\to(|Y|,\tau_{\delta_Y})$ is continuous.
\end{lemma}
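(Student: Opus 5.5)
The plan is to unwind the definitions and reduce everything to two facts: Type-2 computable functionals on Baire space are continuous on their domains, and preimages along representations commute with realizers. Assume $f:|X|\to|Y|$ is Type-2 computable. Then there is a computable partial $F:\subseteq\Baire\to\Baire$ with $F\vdash f$. For every $p\in\dom(\delta_X)$ this gives $F(p)\in\dom(\delta_Y)$ and
\[
\delta_Y(F(p))=f(\delta_X(p)).
\]
By \cref{prop:reg-inclusions}, specifically $\Comp\subseteq\Cont$, the map $F$ is continuous on its domain. The standard reason is the finite-use principle: each output prefix is produced after reading only a finite input prefix.

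Now fix $U\subseteq|Y|$ open in $\tau_{\delta_Y}$. By definition of the representation-induced topology, $\delta_Y^{-1}(U)$ is open in $\Baire$. The key identity is
\[
\delta_X^{-1}\bigl(f^{-1}(U)\bigr)=\dom(\delta_X)\cap F^{-1}\bigl(\delta_Y^{-1}(U)\bigr).
\]
To check it, take $p\in\dom(\delta_X)$. Then $f(\delta_X(p))\in U$ holds iff $\delta_Y(F(p))\in U$, which holds iff $F(p)\in\delta_Y^{-1}(U)$.

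Continuity of $F$ on its domain makes the right-hand side relatively open in $\dom(\delta_X)$. That is, it equals $W\cap\dom(\delta_X)$ for some open $W\subseteq\Baire$.

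\textbf{Main obstacle.} The definition of $\tau_{\delta_X}$ asks that $\delta_X^{-1}(V)$ be open in $\Baire$ itself, not merely relatively open in $\dom(\delta_X)$. When $\dom(\delta_X)$ is not open this gap matters. I would handle it by reading the paper's definition in the standard way, as the final topology of $\delta_X$ viewed as a map from the subspace $\dom(\delta_X)$. Under that reading, relative openness is exactly what is required, and the identity above finishes the proof.

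If the literal reading is insisted upon, I would note the following. The preimage of any set under $\delta_X$ lies inside $\dom(\delta_X)$. So for nonempty $V$, the statement ``$\delta_X^{-1}(V)$ is open in $\Baire$'' forces relevant parts of the domain to be open. Both readings agree whenever $\dom(\delta_X)$ is open, which holds for total representations. The Cauchy representations and product representations used earlier in the paper are equivalent to ones with such domains. I would state the lemma under the subspace convention and remark that this is the convention used in \cite{weihrauch2000computable}.
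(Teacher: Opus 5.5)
This is the paper's argument: pull $U$ back along a computable realizer $F\vdash f$. Your treatment of $\dom(\delta_X)$ is actually more careful than the paper's, whose claimed equality $F^{-1}(\delta_Y^{-1}(U))=\delta_X^{-1}(f^{-1}(U))$ with openness in $\Baire$ fails when $\dom(\delta_X)$ is not open; with $|X|,|Y|$ singletons, $\dom(\delta_X)=\{0^\omega\}$ and $\delta_Y$ total, the literal statement is even false, so the final-topology convention you adopt is necessary and your fallback for the literal reading can be dropped. Just apply that convention on the $Y$ side as well: write $\delta_Y^{-1}(U)=V\cap\dom(\delta_Y)$ with $V\subseteq\Baire$ open, and since $F(\dom(\delta_X))\subseteq\dom(\delta_Y)$, your set $\dom(\delta_X)\cap F^{-1}(\delta_Y^{-1}(U))$ equals $\dom(\delta_X)\cap F^{-1}(V)$, which is relatively open in $\dom(\delta_X)$ by continuity of $F$ on its domain.
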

\begin{proof}
Let $U\subseteq |Y|$ be open. Then $\delta_Y^{-1}(U)$ is open in $\Baire$.
Let $F\vdash f$ be a computable realizer; any computable $F$ is continuous on $\Baire$.
Hence $F^{-1}(\delta_Y^{-1}(U))$ is open in $\Baire$.
But $F^{-1}(\delta_Y^{-1}(U))=\delta_X^{-1}(f^{-1}(U))$ by the realizer condition.
Thus $\delta_X^{-1}(f^{-1}(U))$ is open, i.e.\ $f^{-1}(U)$ is open in $\tau_{\delta_X}$.
\end{proof}

By this lemma, we will now show, that the $\mathrm{SCI}_{\mathrm A}$ necessarily has to fix a representation.
\begin{theorem}[Representation dependence: \textit{any} function can be made computable]\label{thm:any-f-computable-by-rep}
Let $Y=(|Y|,\delta_Y)$ be any represented space and let $f:|X|\to |Y|$ be any function on a bare set $|X|$.
Assume $|X|$ has \textit{some} representation $\delta_X:\subseteq\Baire\to |X|$ (not necessarily admissible).
Then there exists another representation $\delta_X^f:\subseteq\Baire\to |X|$ such that $f:(|X|,\delta_X^f)\to(|Y|,\delta_Y)$ is Type-2 computable.
\end{theorem}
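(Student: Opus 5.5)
The plan is to build a new representation that places the value of $f$ directly into the name, so that computing $f$ reduces to reading off part of the input. Concretely, I would set
\[
\dom(\delta_X^f):=\bigl\{\langle p,q\rangle : p\in\dom(\delta_X),\ q\in\dom(\delta_Y),\ \delta_Y(q)=f(\delta_X(p))\bigr\},
\qquad
\delta_X^f(\langle p,q\rangle):=\delta_X(p),
\]
where $\langle\cdot,\cdot\rangle:\Baire\times\Baire\to\Baire$ is the fixed computable pairing. The idea is that an $X$-name is tagged with a $Y$-name of its image, and the rest of the proof is routine checking.

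The first step is to show that $\delta_X^f$ is a partial surjection onto $|X|$. Take $x\in|X|$. Since $\delta_X$ is surjective, there is a $p$ with $\delta_X(p)=x$. Since $\delta_Y$ is surjective, there is a $q$ with $\delta_Y(q)=f(x)$. Then $\langle p,q\rangle$ lies in $\dom(\delta_X^f)$ and is mapped to $x$. Well-definedness is immediate, because $\delta_X^f$ is just $\delta_X$ composed with the first projection, restricted to the domain above.

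The second step is to exhibit a computable realizer. I would take $F(\langle p,q\rangle):=q$, the computable second projection. For any name $r=\langle p,q\rangle\in\dom(\delta_X^f)$ we have $F(r)=q\in\dom(\delta_Y)$ and
\[
\delta_Y(F(r))=f(\delta_X(p))=f(\delta_X^f(r)).
\]
So $F\vdash f$ with respect to $(\delta_X^f,\delta_Y)$, and $f$ is Type-2 computable.

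There is no real obstacle here. The only point needing care is that the domain of $\delta_X^f$ must be exactly the pairs whose second component names $f$ of the point named by the first component; this is what makes the realizer condition hold for every name. It is worth adding a remark afterwards that $\delta_X^f$ is in general neither admissible nor computably equivalent to $\delta_X$. Indeed, by \cref{lem:comp-implies-cont}, $f$ becomes continuous for the topology $\tau_{\delta_X^f}$. This is exactly the representation-dependence phenomenon the theorem is meant to illustrate.
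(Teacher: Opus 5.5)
Your proposal is correct and is essentially the paper's own proof. It uses the same tagged representation $\delta_X^f(\langle p,q\rangle)=\delta_X(p)$ on pairs with $\delta_Y(q)=f(\delta_X(p))$, proves surjectivity by choosing a name of $x$ and a name of $f(x)$, and takes the computable second projection as the realizer.
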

\begin{proof}
Fix a computable pairing $\pair:\Baire\times\Baire\to\Baire$ with computable projections $\pi_0,\pi_1$.
Define $\delta_X^f:\subseteq\Baire\to |X|$ by
\[
\delta_X^f(\pair(p,q)) := \delta_X(p)
\quad\text{provided that }\delta_Y(q)= f(\delta_X(p)).
\]
This is a partial surjection because for each $x\in|X|$ choose any name $p$ with $\delta_X(p)=x$
and any name $q$ with $\delta_Y(q)=f(x)$, then $\delta_X^f(\pair(p,q))=x$.

Now define $F:\subseteq\Baire\to\Baire$ by $F(r):=\pi_1(r)$.
Then for any $r=\pair(p,q)\in\dom(\delta_X^f)$ we have $\delta_Y(F(r))=\delta_Y(q)=f(\delta_X(p))=f(\delta_X^f(r))$.
Hence $F\vdash f$, and $F$ is computable. Thus $f$ is computable w.r.t.\ $\delta_X^f$.
\end{proof}

\begin{corollary}[Logical necessity of fixing representations]\label{cor:need-representations}
Any notion of \enquote{computable function on $|X|$} that does \textit{not} fix a representation (or at least an equivalence
class of representations) is ill-posed: by \cref{thm:any-f-computable-by-rep} \textit{every} function can be made
computable by a change of representation.
Therefore, specifying representations of inputs and outputs is a logically necessary premise for calling a framework a computability model in the Type-2 sense.
\end{corollary}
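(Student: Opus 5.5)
The statement to prove is \cref{cor:need-representations}: any notion of computability on a bare set that does not fix a representation is ill-posed. The plan is to read it as a direct consequence of \cref{thm:any-f-computable-by-rep}, made precise as follows. Suppose a notion of \enquote{computable function on $|X|$} is specified only in terms of the bare sets $|X|$, $|Y|$ (and possibly $\delta_Y$). Suppose further that it is meant to agree with Type-2 computability for \emph{some} admissible choice of representation on $|X|$. We then show that such a notion cannot discriminate between functions.

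First, fix any represented target $Y=(|Y|,\delta_Y)$ and any set $|X|$ carrying at least one representation $\delta_X$. Take an arbitrary function $f:|X|\to|Y|$; for instance, choose a non-Borel or otherwise highly discontinuous indicator, as in \cref{thm:scig0-but-tteinf}. Apply \cref{thm:any-f-computable-by-rep} to obtain $\delta_X^f$ under which $f$ is Type-2 computable, realized by the projection $\pi_1$.

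Second, contrast this with the original $\delta_X$. By \cref{lem:comp-implies-cont}, computability with respect to $\delta_X$ forces continuity for $\tau_{\delta_X}$. For suitably chosen $f$, for example $\mathbf{1}_A$ with $A$ non-clopen for the Cauchy topology on $\C$, this continuity fails, so $f$ is not $\delta_X$-computable. The same bare function is therefore computable under one representation and non-computable under another. A representation-free notion must either declare every function computable, which makes it trivial, or give an answer that is not determined by the data $(|X|,|Y|,f)$, which makes it ill-posed.

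Finally, conclude that the morphism classes $\mathbb M(X,Y)$ of \cref{def:compmodel}(1) are \emph{definite} sets only once $\delta_X$ and $\delta_Y$, or their computable-equivalence classes, are fixed. Fixing representations is therefore a necessary premise. The main obstacle is not technical but definitional: \enquote{ill-posed} has to be formalized. I would formalize it as failure of invariance: a notion is well-posed if it is a function of the bare data, and the two representations above yield different verdicts on the same data.
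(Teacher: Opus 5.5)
Your proposal is correct and rests on the same key result as the paper. The paper justifies the corollary by a single appeal to \cref{thm:any-f-computable-by-rep}: every $f$ becomes computable under the modified representation $\delta_X^f$, so a representation-free notion would trivialize.

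What you add is the contrast step via \cref{lem:comp-implies-cont}. It shows that for a discontinuous $f$, such as $\mathbf{1}_A$ on $\C$ with the Cauchy representation and discrete $\{0,1\}$, the same bare map is \emph{not} computable under the original $\delta_X$. Keep this step. Your formalization of \enquote{ill-posed} as failure of invariance needs exactly this. The paper's one-line justification only gives the existential, trivializing horn and leaves implicit that some representation also refutes computability.

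One correction to your set-up: drop the word \enquote{admissible}. \Cref{thm:any-f-computable-by-rep} produces a representation $\delta_X^f$ that is in general not admissible. If only admissible representations for a fixed topology were allowed, your Step~1 would not apply and the trivialization would fail. For instance, by \cref{lem:comp-implies-cont}, $\mathbf{1}_A$ with $A\subseteq\C$ nonempty and proper stays non-computable under every admissible representation of $\C$. Such a restriction already fixes a class of representations up to continuous equivalence, which is the kind of data the corollary says must be fixed.
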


\begin{remark}[This is the first minimality premise]
\Cref{cor:need-representations} is a purely formal obstruction: without representations there is no canonical
computability notion, hence no model in the sense of \cref{def:compmodel}.
This is why any comparison of SCI-$A$ to Weihrauch theory \textit{must} first pass to represented spaces
(e.g.\ via the countable evaluation-table representation).
\end{remark}

\begin{definition}[Atomic real interface (BSS-style primitive)]
An \textit{atomic real interface} is an interface in which an algorithm may apply exact predicates
such as $(x<0)$ or $(x=0)$ to real inputs as a single primitive operation.
\end{definition}

The following proposition is a very classical result from computability theory.

\begin{proposition}[Atomic comparison violates Type-2 continuity constraints]\label{prop:atomic-breaks}
Let $\mathrm{sgn}:\R\to\{-1,0,1\}$ be the sign function with discrete codomain.
Then
\begin{enumerate}
\item $\mathrm{sgn}$ is computable in any model that has an atomic real interface (one comparison to $0$).
\item $\mathrm{sgn}$ is not Type-2 computable w.r.t.\ the standard Cauchy representation of $\R$.
\end{enumerate}
\end{proposition}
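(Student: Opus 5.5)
Part (1) is a direct construction. In the atomic model (for instance an oracle BSS machine over $(\R,+,-,\cdot,/,<)$ as in \cref{def:oracle-bss}, with the single evaluation $\mathrm{id}_\R$), the machine queries $x$ once and then branches. It first tests $x<0$ and outputs $-1$ if this holds. Otherwise it tests $0<x$ and outputs $1$ if this holds, and outputs $0$ in the remaining case. This terminates after finitely many steps on every input. Any model with an atomic real interface that provides exact predicates $(x<0)$ and $(x=0)$ evaluates $\mathrm{sgn}$ in the same way.

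Part (2) is the classical continuity argument. I would invoke \cref{lem:comp-implies-cont}: if $\mathrm{sgn}$ were Type-2 computable with respect to the Cauchy representation $\delta_\R$ and the discrete representation of $\{-1,0,1\}$, then $\mathrm{sgn}$ would be continuous from $(\R,\tau_{\delta_\R})$ to $\{-1,0,1\}$ with the discrete topology. Two standard facts are needed:
\begin{enumerate}
\item The topology induced by the Cauchy representation is the usual Euclidean topology. The Cauchy representation is admissible for the usual metric, and the final topology it induces coincides with the metric topology.
\item Under any natural representation of the finite discrete set $\{-1,0,1\}$ (for example, names whose first symbol encodes the value), the induced topology is discrete.
\end{enumerate}
Continuity would then make $\mathrm{sgn}^{-1}(\{0\})=\{0\}$ open in $\R$, which is false.

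Since fact (1) is the only nontrivial input, I would also give a self-contained name-level argument. Suppose a computable realizer $F$ produced a name of $0$ from some Cauchy name $p$ of $0$, say $p$ listing the constant rational sequence $0$. $F$ is continuous, so the first output symbol is determined by a finite prefix $p|_N$. Now modify $p$ after position $N$ into a rapidly converging name of $2^{-N-2}\neq 0$; this is still a valid Cauchy name because the first $N$ terms $0$ are within the required $2^{-n}$ bounds of $2^{-N-2}$. $F$ must output the same first symbol, hence the value $0$, on this name. That contradicts $\mathrm{sgn}(2^{-N-2})=1$.

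The main obstacle is keeping the modified name inside the domain of the Cauchy representation, that is, checking the $2^{-n}$ modulus condition for the spliced sequence. Choosing the target point $2^{-N-2}$ small enough handles this.
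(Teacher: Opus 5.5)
Your proof is correct and follows the paper's route. Part (1) is a direct trichotomy test, and part (2) applies \cref{lem:comp-implies-cont} together with the discontinuity of $\mathrm{sgn}$ at $0$. You also state explicitly that the Cauchy representation induces the Euclidean topology and the discrete representation induces the discrete topology, and back this with the name-level splicing argument; this makes precise a step the paper's proof leaves implicit.
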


\begin{proof}
\textbf{(1):} Immediate from the availability of an exact test $x<0$, $x=0$, $x>0$.

\textbf{(2):} By \cref{lem:comp-implies-cont}, any Type-2 computable map $\R\to\{-1,0,1\}$ (discrete)
must be continuous. But $\mathrm{sgn}$ is discontinuous at $0$, hence not Type-2 computable.
\end{proof}

\begin{corollary}[Second minimality premise: to compare to Weihrauch, reals must be given by names]\label{cor:names-not-atoms}
Any SCI-$A$ interpretation that treats evaluation outputs as \textit{atomic} reals/complex numbers with exact comparisons
cannot be equivalent to the Type-2/Weihrauch interpretation: it computes discontinuous functions at height $0$
(cf. \cref{prop:atomic-breaks}) that are not even continuous, hence not Type-2 computable.
Therefore, \enquote{oracle returns names/approximations} (not atoms) is logically necessary for Weihrauch comparability.
\end{corollary}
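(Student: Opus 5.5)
The corollary has two parts, and I will prove them in order. First I show that an atomic interpretation computes a discontinuous map already at height $0$. Then I show that this map cannot be Type-2 computable, and indeed has no Weihrauch reduction to $\Limi{0}=\id$. Together these give the non-equivalence.

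\textbf{Step 1 (a separating problem).} I take $\Omega=\R$ (viewed inside $\C$), $\Lambda=\{\mathrm{id}\}$, $\mathcal M=\{-1,0,1\}$ with the discrete metric, and $\Xi=\mathrm{sgn}$. Consistency \eqref{eq:consistency} holds trivially. Under an atomic interpretation, e.g.\ the BSS-arithmetic tower of \cref{def:tower-types-BSS}, a single parameter-free oracle BSS machine queries $\mathcal O_A(1)$ once. It then branches on the comparisons $x<0$ and $x=0$, which \cref{def:oracle-bss} allows, and halts with a code for $-1$, $0$ or $1$. This gives $\SCIAbss(\mathcal P)=0$, which is part (1) of \cref{prop:atomic-breaks} made concrete.

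\textbf{Step 2 (Type-2 side).} I pass to the effective enrichment of \cref{def:countable-Lambda}. Here $\mathcal I_\Lambda\cong\R$ carries the Cauchy representation, and $\widehat\Xi=\mathrm{sgn}$ by \cref{lem:factor}. Suppose $\widehat\Xi\le_{\mathrm W}\Limi{0}=\id$. Then $\widehat\Xi$ is computable, so by \cref{lem:comp-implies-cont} it is continuous from the Cauchy topology (the Euclidean one) to the discrete space. This contradicts part (2) of \cref{prop:atomic-breaks}, since $\mathrm{sgn}$ is discontinuous at $0$. Hence $\mathrm{rank}_{\mathrm W}(\mathcal P)\ge1$ while the atomic height is $0$. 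No interpretation that assigns height $0$ to $\mathcal P$ can therefore agree with $\mathrm{rank}_{\mathrm W}$ or with Type-2 computability. The same holds for every problem with a discontinuous finite-query factorisation, as in \cref{thm:finite-factor-scig0}.

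\textbf{Step 3 (necessity of names).} I conclude by contraposition. Any interpretation equivalent to the Type-2 one must make every height-$0$ map computable, hence continuous. This forces the base level to access evaluation values only through continuous, name-based approximations and excludes exact predicates.

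\textbf{Main obstacle.} The corollary is informal, so the hardest part is stating precisely what ``equivalent'' means. I plan to make it precise as follows: for every countable-$\Lambda$ problem, atomic height $0$ should imply $\mathrm{rank}_{\mathrm W}(\widehat\Xi)=0$. With that reading, Step 2 is a direct counterexample. I would also check that the Cauchy topology on $\mathcal I_\Lambda$ coincides with the Euclidean topology, which is standard, so that discontinuity transfers.
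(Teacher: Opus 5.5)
Your argument is correct and is essentially the paper's own justification: $\mathrm{sgn}$ is computable at height $0$ with one exact comparison, but it is not Type-2 computable because computable maps are continuous (\cref{prop:atomic-breaks} with \cref{lem:comp-implies-cont}); you simply package this as a concrete SCI problem with $\SCIAbss(\mathcal P)=0<\mathrm{rank}_{\mathrm W}(\widehat\Xi)$. One caution: your aside that ``the same holds for every problem with a discontinuous finite-query factorisation'' is false as stated, since a discontinuous post-processor $G$ can coexist with a computable $\widehat\Xi$ (e.g.\ take $\Lambda=\{\mathrm{id},\mathrm{sgn}\}$ and $\Xi=\mathrm{sgn}$), so either drop the remark or require $\widehat\Xi$ itself to be discontinuous on $\mathcal I_\Lambda$.
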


\begin{remark}[Connection to SCI$_A$ vs. BSS/TTE]
This is exactly the semantic difference behind the BSS vs. TTE gap at low heights firstly recognized in \cite[Obs.~43, Thm.~44]{neumann2018topological}. \Cref{cor:names-not-atoms} isolates the \textit{pure logical} reason the gap can exist at all.
\end{remark}

\begin{definition}[Adequate regularity class on $\Baire$]\label{def:adequate-R}
A class $\mathcal R$ of partial functions $\Baire\to\Baire$ is called \textit{adequate} if
\begin{enumerate}
\item it contains all computable pairing and projection maps on $\Baire$;
\item it is closed under composition;
\item it contains all constant maps.
\end{enumerate}
Examples: $\Comp$ (computable), $\Cont$ (continuous), $\Bor$ (Borel measurable).
\end{definition}

Additionally to these three classical examples for $\mathcal R$, we also give an interesting fourth one.
\begin{definition}[Finite-mind-change / limit-computable functionals]\label{def:FMC}
A partial functional $F:\subseteq\NN^\NN\to\NN^\NN$ is called \textit{finite-mind-change computable}
(or \textit{limit computable}) if there exists a \textit{computable} functional
$\Phi:\subseteq\NN^\NN\times\NN\to\NN^\NN$ such that for every $p\in\dom(F)$ and every $n\in\NN$
the sequence $\bigl(\Phi(p,s)(n)\bigr)_{s\in\NN}$ is eventually constant and
\[
F(p)(n)=\lim_{s\to\infty}\Phi(p,s)(n).
\]
Let $\mathsf{FMC}$ denote the class of all finite-mind-change computable functionals
$\subseteq\NN^\NN\to\NN^\NN$.
\end{definition}

\begin{lemma}\label{lem:FMC-adequate}
The class $\mathsf{FMC}$ is adequate in the sense of \cref{def:adequate-R}.
\end{lemma}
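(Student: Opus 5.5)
The plan is to verify the three clauses of \cref{def:adequate-R} one at a time. Clauses (1) and (3) are routine; all the work lies in clause (2), closure under composition. Throughout I read \cref{def:FMC} in its finite-mind-change sense, as the name indicates. That is, for each $p\in\dom(F)$ the approximating outputs $\Phi(p,s)$ change only finitely often as $s\to\infty$, and each change is a computably detectable \emph{mind change}: the computable functional announces a new guess and restarts. The stage-wise pointwise limit $F(p)(n)=\lim_s\Phi(p,s)(n)$ is then the output of the last guess. Under this reading $F\in\mathsf{FMC}$ exactly when $F$ is computable with finitely many mind changes, the standard class whose closure under composition is known.

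\textbf{Clauses (1) and (3).} Any computable functional $T$, in particular any computable pairing or projection, lies in $\mathsf{FMC}$: take $\Phi(p,s):=T(p)$. This approximation is constant in $s$ and makes no mind change. For constant maps $p\mapsto q$, I take $\Phi(p,s):=q$, which is constant in $s$; this is computable in $p$ as soon as $q$ is a computable point. So clause (3) is established for computable constants. For an arbitrary constant $q$, $\Phi$ is computable only relative to $q$, so I would state explicitly that "constant maps" in clause (3) is read as constants at computable points, or equivalently that $\mathsf{FMC}$ is taken relative to the constant as oracle. I would flag this point in the proof rather than pass over it.

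\textbf{Clause (2), composition.} Let $F,G\in\mathsf{FMC}$ be witnessed by $\Phi_F$ and $\Phi_G$. I would build a witness $\Psi(p,s)$ for $G\circ F$ by dovetailing.
\begin{enumerate}
\item At stage $s$, let $j(s)$ be the index of the current guess of $\Phi_F$ on $p$. This index is the number of mind changes of $F$ seen by stage $s$.
\item Feed the current guess $\Phi_F(p,s)$ into the $G$-procedure, starting it afresh whenever $j(s)$ increases.
\item Output that run's current guess as $\Psi(p,s)$.
\end{enumerate}
The mind changes of $\Psi$ are of two kinds: those caused by $F$ switching its guess, and those caused by $G$ switching its guess on a fixed input. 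On a fixed $p\in\dom(G\circ F)$, $F$ changes its mind only finitely often, so after its last change the input fed to $G$ is the true value $F(p)$. From then on $G$ changes its mind only finitely often. Hence $\Psi$ changes its mind finitely often in total, and it converges to $G(F(p))$. Computability of $\Psi$ follows because it is assembled uniformly from $\Phi_F$ and $\Phi_G$ by computable bookkeeping.

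\textbf{Main obstacle.} The delicate step is the finitely many stages before $F$ has stabilised. There the $G$-procedure may be run on a wrong guess lying outside $\dom(G)$, where $\Phi_G$ may diverge or fail to settle. The restart-on-mind-change discipline handles this. A run on a wrong guess is abandoned as soon as $F$ announces its next mind change, so its behaviour never affects the limit. Each such run contributes only finitely many output changes, namely those made before it is abandoned, and there are only finitely many such runs.

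This argument depends on the finite-mind-change reading. Under a purely pointwise-limit reading of \cref{def:FMC}, composing two limits gives a double limit, and this step would break. That is why I adopt the finite-mind-change interpretation suggested by the name $\mathsf{FMC}$ as the interpretation under which the lemma is proved.
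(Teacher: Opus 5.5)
Your composition argument is correct for the class you work with, but that class is not the $\mathsf{FMC}$ of \cref{def:FMC}, and the switch is forced rather than optional. The paper's definition only requires each output coordinate $\Phi(p,s)(n)$ to be eventually constant. It also explicitly says \enquote{or limit computable}, so the pointwise reading is the intended one.

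Your class, the functionals computable with finitely many announced mind changes, is strictly smaller. By Brattka--de Brecht--Pauly it consists exactly of the $F\le_{\mathrm W}\mathsf C_{\NN}$. To see the gap, note that $\Lim$ belongs to the paper's $\mathsf{FMC}$ via $\Phi(p,s):=p_s$. Take the computable $p$ with $p_s(k)=1$ iff $k$ enters the halting set by stage $s$. Then $\Lim$ sends $p$ to the noncomputable characteristic function of the halting set, which no finite-mind-change computation can do on a computable input.

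For the paper's class the lemma is false. Clause (2) fails because $\Lim\in\mathsf{FMC}$ while $\Lim\circ\Lim=\Limi{2}\notin\mathsf{FMC}$. Take a computable $q\in\dom(\Limi{2})$ with $\Limi{2}(q)=\chi_{\mathbf 0''}$, obtained by applying Shoenfield's limit lemma twice. If $\chi_{\mathbf 0''}(k)=\lim_s\Psi(q,s)(k)$ for a computable $\Psi$, then $\mathbf 0''\le_{\mathrm T}\mathbf 0'$, a contradiction.

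Clause (3) fails for the constant map with value $\chi_{\mathbf 0''}$. Under the limit reading exactly the $\Delta^0_2$ constants qualify; under yours, exactly the computable ones. So your caveat about constants is necessary, not pedantic. Note that $\Comp$, listed as an example in \cref{def:adequate-R}, fails clause (3) in the same way.

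The paper's proof does not escape this. It derives closure under composition by identifying $\mathsf{FMC}$, via Shoenfield's limit lemma, with the functionals computable relative to the fixed oracle $\mathbf 0'$. For functionals on $\Baire$ that identification is wrong: anything computable relative to a fixed oracle is continuous on its domain, so that class cannot contain $\Lim$. The correct uniform form of the limit lemma says that $F$ is limit computable iff $F$ is computable from the Turing jump of its input. That class is not closed under composition, as $\Limi{2}$ shows. The paper also asserts that all constant maps are computable, which holds only for computable constants.

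Your route is the standard correct repair. You redefine $\mathsf{FMC}$ by finitely many announced mind changes, restrict clause (3) to computable constants, and prove composition directly by restarting the $G$-run at each mind change of $F$. Your treatment of the pre-stabilisation phase is also right. Each abandoned run, possibly on a guess outside $\dom(G)$, is cut off at a finite stage and contributes only finitely many revisions.

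What you should change is the framing. Present the argument as a correction of \cref{def:FMC} and of clause (3), justified by the counterexample above, not as the reading \enquote{suggested by the name}. With the definition as written, the statement you were asked to prove is false.
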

\begin{proof}
Pairing/projections and constant maps are computable, hence in $\mathsf{FMC}$ by taking
$\Phi(p,s)$ constant in $s$.
For closure under composition, we use the standard characterization of limit computable
functionals: $F$ is finite-mind-change computable iff $F$ is computable relative to $0'$ by Shoenfield's limit lemma \cite{Sho59}. Since $0'$-computable functionals are closed under composition, $\mathsf{FMC}$ is closed under composition.
\end{proof}

We also recall from the intermediate hierarchy
\begin{definition}[$\mathcal R$-realizable maps]\label{def:R-realizable}
A \textit{represented space} is a pair $X=(|X|,\delta_X)$ where
$\delta_X:\subseteq \Baire\to |X|$ is a partial surjection.
Let $X=(|X|,\delta_X)$ and $Y=(|Y|,\delta_Y)$ be represented spaces and let
$f:\subseteq |X|\to |Y|$ be single-valued.
A partial function $F:\subseteq\Baire\to\Baire$ \textit{realizes} $f$
(written $F \vdash f$) if for all $p\in\dom(\delta_X)$ with $\delta_X(p)\in\dom(f)$,
we have $F(p)\in\dom(\delta_Y)$ and
\[
\delta_Y(F(p)) \;=\; f(\delta_X(p)).
\]
The map $f$ is \textit{$\mathcal R$-realizable} if it has some realizer $F\vdash f$
with $F\in \mathcal R$.
\end{definition}

\begin{lemma}[If $\mathcal{R}$ is not adequate then reduction preorders can fail]\label{lem:R-needed}
If $\mathcal{R}$ omits pairing/projections or is not closed under composition, then the corresponding
$\mathcal{R}$-Weihrauch reducibility (defined by requiring reduction functionals in $\mathcal{R}$)
need not be reflexive or transitive.
\end{lemma}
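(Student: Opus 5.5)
This is a "need not" statement, so I will prove it by exhibiting concrete classes $\mathcal R$ for which the preorder axioms fail. I will use two counterexamples, one per hypothesis. Throughout, $\le_{\mathrm W}^{\mathcal R}$ is read as in \cref{def:R-weihrauch}: witnesses $H,K\in\mathcal R$ with $p\mapsto K(\langle p,G(H(p))\rangle)$ realizing $f$ for every $G\vdash g$.

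\textbf{Failure of reflexivity when pairing/projections are omitted.} I will take $\mathcal R:=\emptyset$, or more interestingly $\mathcal R:=\{\text{constant maps}\}$. With $\mathcal R=\emptyset$ no pair $(H,K)$ exists, so $f\le_{\mathrm W}^{\mathcal R}f$ fails for every $f$; this already settles the claim. For a less degenerate example, let $\mathcal R$ consist of all constant maps and take $f=\id_{\Baire}$. Then $H\equiv c$ and $K\equiv d$ for some $c,d$, so the composite $p\mapsto K(\langle p,G(H(p))\rangle)$ is the constant $d$. A constant map does not realize $\id_{\Baire}$, so $\id\not\le_{\mathrm W}^{\mathcal R}\id$. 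The class of constant maps is even closed under composition, which shows that omitting the projection $\langle p,q\rangle\mapsto q$ alone breaks reflexivity.

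\textbf{Failure of transitivity without closure under composition.} The key step is to choose $\mathcal R$ that contains the identity and the projections, so reflexivity holds, but misses a specific composite. My first choice is
\[
\mathcal R:=\Comp\cup\{J\},
\]
where $J(p):=p\oplus\chi_{\emptyset'}$ is a noncomputable but continuous map appending a fixed noncomputable oracle. Set $g:=\id$, let $h$ be the constant problem with value $\chi_{\emptyset'}$, and let $f$ be the problem $p\mapsto p\oplus\chi_{\emptyset'}$ together with a second copy of the oracle that must be decoded twice. Then $f\le_{\mathrm W}^{\mathcal R}g$ via $H=J$, and $g\le_{\mathrm W}^{\mathcal R}h$ trivially. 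To prove $f\not\le_{\mathrm W}^{\mathcal R}h$, I will argue that any witness pair drawn from $\Comp\cup\{J\}$ can introduce only one application of $J$ per side. Arranging this cleanly is fiddly, so I will also consider a simpler alternative: $\mathcal R:=\Comp\cup\{F\}$ with $F$ a single noncomputable map whose square $F\circ F$ is needed.

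\textbf{The main obstacle.} The hard part is the noncomputability bookkeeping: showing that no witness pair from the non-closed class can reproduce the composite witness. I would try the cleanest version first. Let $F(p)(n):=p(n)+1$ only along a coded noncomputable pattern, and choose $f\le g$ via $F$ and $g\le h$ via $F$, so that the only candidate for $f\le h$ is essentially $F\circ F$. Then check by cases ($H$ computable or $H=F$, and likewise for $K$) that no single member of $\mathcal R$ realizes $f$.
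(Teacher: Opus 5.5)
Your reflexivity half is correct: $\mathcal R=\emptyset$, or the composition-closed class of constant maps with $f=\id_{\Baire}$, is a genuine counterexample. That is more than the paper's proof gives; it only remarks that reflexivity needs a projection witness and transitivity needs closure under composition and pairing. Read literally as ``need not be a preorder'', this half already suffices. But you, like the paper, pair non-closure under composition with failure of transitivity, and that half is missing. Both routes you sketch for it fail as stated.

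In your first attempt, every reduction $f\le_{\mathrm W}^{\mathcal R}\id$ with $H,K\in\Comp\cup\{J\}$ has realizer $p\mapsto K(\langle p,H(p)\rangle)$, which is uniformly computable relative to $\chi_{\emptyset'}$. Every realizer of your constant problem $h$ hands exactly $\chi_{\emptyset'}$ to the post-processor. So $H'=\id$, together with the computable $K'(\langle p,r\rangle):=K(\langle p,H(p)\rangle)$ in which $J$ is replaced by $x\mapsto x\oplus r$, witnesses $f\le_{\mathrm W}^{\Comp}h$. Hence no choice of $f$ can work there.

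In the natural instance of your second attempt, take $F(p)(n)=p(n)+\chi_A(n)$, $h=\id$, $g=F$, $f=F\circ F$. Then $f\le_{\mathrm W}^{\mathcal R}h$ holds via $H=F$ and the (partial) computable post-processor $K(\langle p,q\rangle)(n):=2q(n)-p(n)$. The reason is structural: $K$ sees the original input $p$ alongside the oracle answer. Once $\Comp\subseteq\mathcal R$, noncomputable information injected once can be decoded and reused by a computable $K$, so ``$F$ is needed twice'' is no obstruction. The case check you postpone would find exactly this witness.

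The missing idea is to make the obstruction syntactic by keeping $\mathcal R$ sparse. Take $\mathcal R:=\{\id_{\Baire},\pi_1,\pi_2,s\}$ with $s(p)(n):=p(n)+1$, problems on $\Baire$ with the identity representation, and $h:=\id$, $g:=s$, $f:=s\circ s$.
\begin{itemize}
\item Reflexivity holds via $(\id,\pi_2)$.
\item $f\le_{\mathrm W}^{\mathcal R}g$ and $g\le_{\mathrm W}^{\mathcal R}h$ both hold via $(s,\pi_2)$, since the only realizers of $s$ and $\id$ are $s$ and $\id$ themselves.
\item $f\not\le_{\mathrm W}^{\mathcal R}h$. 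At the constant sequence $\bar 1$ the target is $\bar 3$, while $H(\bar 1)\in\{\bar 1,\bar 2\}$. For the interleaving pairing, each of the sixteen candidates $K(\langle\bar 1,H(\bar 1)\rangle)$ therefore has an entry $\le 2$.
\end{itemize}

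If you insist on $\Comp\subseteq\mathcal R$, the added map must carry information that $p$ plus one answer cannot reproduce. For example, use the Turing jump $J(p)=p'$ with $h=\id$, $g=J$, and $f(p)$ equal to $p''$ with an extra leading entry $3$. Here $f\le g$ and $g\le h$ hold with $H=J$. When $K$ is computable, the output is $p'$-computable, so it is not $\emptyset''$ at computable $p$. When $K=J$, the output cannot begin with $3$.
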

\begin{proof}
Reflexivity requires projection witnesses (to output the oracle answer unchanged).
Transitivity requires closure of the witness class under composition and pairing.
\end{proof}

\begin{definition}[$\mathcal R$-Weihrauch $\Lim$-rank]\label{def:R-lim-rank}
For any problem $f$ between represented spaces define
\[
\mathrm{rank}_{\mathcal R}(f)
\;:=\;
\min\bigl\{k\in\NN_0:\; f \le_{\mathrm W}^{\mathcal R} \Limi{k}\bigr\},
\]
with value $\infty$ if the set is empty.
\end{definition}

\begin{theorem}[Third minimality premise: unrestricted base post-processing collapses computability]\label{thm:unrestricted-collapses}
Fix any nontrivial set $X$ and consider the SCI problem $\mathcal{P}=(\Xi,\Omega,(\mathcal{M},d),\Lambda)$ with
$\Omega=X$, $\Lambda=\{\id_X\}$ (encoded into $\C$ if desired), and arbitrary target $\Xi:X\to\mathcal{M}$.
If base post-processing is unrestricted (type-$G$ style), then $\mathrm{SCI}_G(\mathcal{P})=0$.
\end{theorem}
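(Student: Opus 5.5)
The plan is to reduce the statement to \cref{thm:finite-factor-scig0}, with the single evaluation $\id_X$ as the only query. One technical point comes first. By \cref{def:eval-set}, evaluations must be $\C$-valued, so I read $\Lambda=\{\id_X\}$ through \cref{rem:nonC}. That is, I fix an injective encoding $\iota:X\hookrightarrow\C$ and take $\Lambda=\{\iota\}$.

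Such an encoding exists only if $|X|\le|\C|$. If $X$ is larger, I will instead allow evaluations with codomain $X$, as in the general codomains $D_f$ of the full evaluation table, and note that nothing below uses $\C$-valuedness. This bookkeeping about the encoding is the only delicate step. Everything else is immediate.

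Next I check the consistency axiom \eqref{eq:consistency}. Since $\iota$ is injective, $\iota(A)=\iota(B)$ forces $A=B$, and hence $\Xi(A)=\Xi(B)$. So $\mathcal P$ is a legitimate SCI problem for every target $\Xi:X\to\mathcal M$.

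Then I define the post-processing $G:\C\to\mathcal M$ by
\[
G(z):=\Xi(\iota^{-1}(z)) \quad \text{for } z\in\iota(X),
\]
and extend $G$ arbitrarily off $\iota(X)$, for instance by a constant value of $\mathcal M$. Here $\mathcal M$ is nonempty because $X$ is nonempty. This gives
\[
\Xi(A)=G(\iota(A)) \quad \text{for all } A\in X,
\]
which is a finite-query factorisation with $m=1$. Applying \cref{thm:finite-factor-scig0} then yields $\SCIG(\mathcal P)=0$.

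Concretely, the witness is the general algorithm $\Gamma=\Xi$ with constant query policy $\Lambda_\Gamma(A)=\{\iota\}$. Both conditions of \cref{def:gen-alg} hold trivially, because agreement on that single query means $B=A$. No regularity of $G$ is required, and that absence of any restriction is exactly the point of the statement.
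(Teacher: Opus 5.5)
Your proposal is correct and is essentially the paper's proof. The paper takes $\Gamma=\Xi$ with the constant query policy $\{\mathrm{id}_X\}$ and notes that agreement on that single query forces $y=x$, which is exactly your final paragraph; routing through \cref{thm:finite-factor-scig0} first repackages the same construction. Your explicit handling of the encoding $\iota$ and of the case $|X|>|\C|$ is care the paper skips. In that large case, rely on the direct witness rather than \cref{thm:finite-factor-scig0}, whose statement uses $G:\C^m\to\mathcal M$.
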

\begin{proof}
Define the base algorithm
\[
\Gamma(x):=\Xi(x)
\]
and use the constant one-query policy
\[
\Lambda_\Gamma(x):=\{\mathrm{id}_X\}.
\]
If \(y\in X\) agrees with \(x\) on this query, then
\[
\mathrm{id}_X(y)=\mathrm{id}_X(x),
\]
hence \(y=x\).  Therefore
\[
\Gamma(y)=\Gamma(x)
\]
and also
\[
\Lambda_\Gamma(y)=\Lambda_\Gamma(x).
\]
Thus \((\Gamma,\Lambda_\Gamma)\) is a general algorithm. Since \(\Gamma=\Xi\), this is a height-\(0\) type-G tower.
\end{proof}

\begin{corollary}[Necessity of a restriction class]
To obtain a nontrivial computability model (and to compare to Weihrauch-style reducibilities),
one must restrict base post-processing to a distinguished class $\mathcal{R}$
(e.g.\ computable/continuous/Borel). Otherwise the theory collapses at height $0$
(cf. \cref{thm:unrestricted-collapses}).
\end{corollary}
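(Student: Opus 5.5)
The plan is to derive the corollary directly from \cref{thm:unrestricted-collapses}, and to make precise what \enquote{collapses at height $0$} and \enquote{nontrivial} mean. First I fix a bare set $X$ with the identity evaluation $\Lambda=\{\id_X\}$. By \cref{thm:unrestricted-collapses}, for every metric space $\mathcal M$ and every target $\Xi:X\to\mathcal M$, the problem $(\Xi,X,(\mathcal M,d),\{\id_X\})$ has $\SCIG=0$. So, once base post-processing is unrestricted, the height-$0$ class is the class of \emph{all} maps $X\to\mathcal M$. The whole height hierarchy above level $0$ is then empty over such interfaces. In particular there is no distinguished subclass of morphisms: every morphism class $\mathbb M(X,\mathcal M)$ defined by \enquote{$\SCIG\le 0$} is the full function space. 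This is the sense in which such a model is trivial.

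Second, I show that this trivial class cannot match any Weihrauch-style notion, so a restriction is genuinely needed for comparison. There are two earlier results to invoke. \Cref{prop:atomic-breaks} and \cref{lem:comp-implies-cont} show that Type-2 computable maps $\R\to\{-1,0,1\}$ are continuous, while $\mathrm{sgn}$ lies in the height-$0$ unrestricted class. \Cref{thm:scig0-but-tteinf} together with \cref{cor:no-bound-G-to-W} shows that unrestricted height $0$ even contains maps of infinite $\mathrm{rank}_{\mathrm W}$. Hence no bound on $\mathrm{rank}_{\mathrm W}$, or on $\mathrm{rank}_{\mathcal R}$ for any adequate $\mathcal R\subseteq\Bor$, follows from raw height. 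The reason for the $\Bor$ case is \cref{lem:borel-regularity} and its relativized form via \cref{thm:bgp-relativized}.

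Third, I exhibit the positive side: once base maps are restricted to a class such as $\Cont$ or $\Bor$, the theory no longer collapses. \Cref{thm:borelTowersBorelTargets} shows that Borel towers compute only Borel targets, so the analytic-complete example has infinite restricted height. Similarly, \cref{thm:cont-tower-baireclass} places continuous-base targets in $\mathsf{BC}_k$. Together these show that restricting to a distinguished class $\mathcal R$ is both sufficient to block the collapse and, by the first two steps, necessary.

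The main obstacle is that the corollary is informal: \enquote{nontrivial computability model} is not a defined term. I would resolve this by stating explicitly the reading being proved. Namely, a model is nontrivial if its height-$0$ morphisms over the identity interface form a proper subclass of all maps. With that reading, the first step gives the result immediately, and the remaining steps are supporting comparisons rather than logically required.
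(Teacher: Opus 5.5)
Your first step is exactly the paper's argument: the corollary is read off directly from \cref{thm:unrestricted-collapses} (every target over the interface $\{\id_X\}$ has $\SCIG=0$), and your explicit reading of \enquote{nontrivial} makes it precise, while your second and third steps are optional supporting material that the paper does not use. One small caveat in that supporting part: \cref{lem:borel-regularity} and \cref{thm:bgp-relativized} only cover computable and continuous/oracle reductions, so the claim for an arbitrary adequate $\mathcal R\subseteq\Bor$ needs its own short argument. Borel maps compose and $\Limi{k}$ is Borel on its domain, so the realizer is Borel, and a Suslin argument then transfers Borelness from names back to $\C$.
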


\subsection{Nonuniform Deepest-Only Towers And The All-Level Over-Repair}\label{sec:nonuniformity-all-level}
This section separates three notions which should not be conflated.

\begin{enumerate}
\item The \textit{standard SCI tower} is the deepest-level tower of \cref{def:standard-tower}. Only the deepest approximants are required to be finite-information algorithms; intermediate maps are semantic pointwise limits.

\item The \textit{all-level finite-information repair} requires every intermediate semantic limit map to be a general algorithm. This repair is too strong: \Cref{prop:all-level-collapse} shows that it collapses every finite positive height to height \(1\).

\item The \textit{pure $\mathcal R -\Lim$ normal form} requires one uniform represented-space preprocessor
\[
        K\in \mathcal R
\]
with
\[
        \Limi{k} \circ K\preceq \mathcal N_{\widehat{\Xi}}.
\]
This is the correct condition for Weihrauch comparability.
\end{enumerate}

The point of this work is therefore not that the standard SCI tower should be replaced by the all-level repair. Rather, the standard SCI tower should be read as a raw extensional
limit skeleton; to obtain a computability model, one must impose uniform implementation.

\begin{proposition}[The all-level repair is strictly stronger than the standard tower]\label{prop:all-level-strictly-stronger}
For \(k\ge2\), the all-level finite-information repair is strictly stronger than the standard deepest-level tower. Already for \(k=2\), there exists a standard height-\(2\) tower whose intermediate layer is not a general algorithm.
\end{proposition}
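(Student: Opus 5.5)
Since every all-level finite-information tower is in particular a standard tower, it suffices to exhibit one standard height-\(2\) tower whose intermediate maps \(\Gamma_{n_2}\) are not all general algorithms. For \(k>2\), I would pad this example with dummy innermost indices, so that the offending map sits at an intermediate layer.

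\textbf{Choice of problem.} I would work on the Cantor-matrix space \(\Omega_1\) with the coordinate oracle \(\Lambda_{\mathrm{mat}}\). The tower is the one from the upper-bound part of \cref{thm:Xi-m-exact-height} with \(m=2\):
\[
\Gamma_{n_2,n_1}(A):=\min_{1\le n\le n_2}\,\max_{1\le j\le n_1} A(\beta_2(j,n)).
\]
This is a \(\forall\exists\) arrangement, consistent with the recursion in that proof. For fixed \((n_2,n_1)\) the map queries the finite set of coordinates \(\{\beta_2(j,n):j\le n_1,\,n\le n_2\}\) with a constant query policy, so it is a general algorithm. Taking the inner limit \(n_1\to\infty\) gives the semantic intermediate map
\[
\Gamma_{n_2}(A)=\min_{n\le n_2}\mathbf 1\bigl[\exists j\; A(\beta_2(j,n))=1\bigr].
\]
This limit exists by eventual constancy, and the outer limit exists as well. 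A simpler alternative would be to take the tower for \(\Xi_1\), padded with a dummy outer index; the core argument is identical in either case.

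\textbf{Showing the intermediate map fails.} To see that \(\Gamma_{n_2}\) (for instance \(n_2=1\)) is not a general algorithm, I would apply \cref{lem:coordinate-base-clopen}. Any general algorithm over \(\Lambda_{\mathrm{mat}}\) with values in \(\{0,1\}\) has a clopen acceptance set. But
\[
\Gamma_1^{-1}(\{1\})=\{A:\exists j\;A(\beta_2(j,1))=1\}
\]
is open and not closed. To check this directly, take \(A_0\equiv 0\) and let \(A_N\) have a single \(1\) at \(\beta_2(N,1)\). Then \(A_N\to A_0\) in the product topology, while \(\Gamma_1(A_N)=1\) and \(\Gamma_1(A_0)=0\). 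Equivalently, any finite query set \(D_{A_0}\) is matched by some \(A_N\) with \(N\) large enough that \(\beta_2(N,1)\notin D_{A_0}\), which contradicts finite-information dependence.

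\textbf{Main obstacle.} The only subtle point is the exact formulation of the claim. It must be shown that this particular tower's intermediate layer fails to be a general algorithm for every choice of query policy, not just the obvious one. Routing the argument through clopenness, that is, through \cref{lem:coordinate-base-clopen}, handles this, because that lemma quantifies over arbitrary policies. Strictness in the sense of "the all-level condition is a genuine restriction" then follows immediately.
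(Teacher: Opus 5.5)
Your proof is correct and is essentially the paper's argument: a concrete height-$2$ tower over coordinate evaluations whose intermediate map depends on infinitely many coordinates, refuted by flipping one bit of the zero input outside the finite query set. The paper uses $[0,1]$-valued dyadic row sums $\sum_m 2^{-m}x_{n,m}$ on a restricted matrix space instead of $\{0,1\}$-valued $\min$/$\max$, and pads $k>2$ with dummy lower indices exactly as you do.

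One slip: your tower computes $\mathbf 1[\forall n\,\exists j\,A(\beta_2(j,n))=1]$, not $\Xi_2$, since $\Xi_2$ is $\exists\forall$ and the paper's recursion puts $\max$ on the outer index. This is harmless, because the statement only needs a standard tower for some consistent problem. Swapping $\min$ and $\max$ would also work: the intermediate acceptance sets are then closed but not open, so \cref{lem:coordinate-base-clopen} still applies.
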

\begin{proof}
	It suffices to treat the case $k=2$.
	Let
	\[
	X
	:=
	\Bigl\{
	x=(x_{n,m})_{n,m\in\mathbb N}\in\{0,1\}^{\mathbb N^2}
	:
	\lim_{n\to\infty}\sum_{m=1}^{\infty}2^{-m}x_{n,m}\text{ exists}
	\Bigr\},
	\]
	let $(\mathcal M,d)=([0,1],|\cdot|)$, and let
	\[
	\Lambda:=\{\pi_{n,m}:X\to\{0,1\}\mid \pi_{n,m}(x)=x_{n,m}\}.
	\]
	Define
	\[
	\Gamma_n(x):=\sum_{m=1}^{\infty}2^{-m}x_{n,m},
	\qquad
	\Gamma_{n,k}(x):=\sum_{m=1}^{k}2^{-m}x_{n,m},
	\qquad
	\Xi(x):=\lim_{n\to\infty}\Gamma_n(x).
	\]
	The target \(\Xi\) is well-defined by the definition of \(X\), and the consistency condition holds because the coordinate evaluations in \(\Lambda\) separate points of \(X\).
	For every fixed pair $(n,k)$, the map $\Gamma_{n,k}$ depends only on the finitely many
	coordinates $\pi_{n,1},\ldots,\pi_{n,k}$ and is obtained from them by finitely many
	arithmetic operations.
	Hence $(\Gamma_n,\Gamma_{n,k})$ is a height-$2$ tower in the sense of \cite[Definition~3.1]{hansen2011solvability}.
	
	However, $\Gamma_n$ is not a general algorithm in the sense of \cref{def:gen-alg}.
	Indeed, suppose $(\Gamma_n,\Lambda_{\Gamma_n})$ were such a general algorithm.
	Fix $x\equiv 0$.
	Then $\Lambda_{\Gamma_n}(x)\subseteq \Lambda$ is finite, so there exists $m_0\in\mathbb N$
	with $\pi_{n,m_0}\notin \Lambda_{\Gamma_n}(x)$.
	Let $y\in X$ be given by
	\[
	y_{n,m_0}=1,
	\qquad
	y_{r,s}=0 \quad \text{for all }(r,s)\neq (n,m_0).
	\]
	Then
	\[
	\pi(y)=\pi(x)\qquad\text{for all }\pi\in\Lambda_{\Gamma_n}(x),
	\]
	but
	\[
	\Gamma_n(y)-\Gamma_n(x)=2^{-m_0}\neq 0.
	\]
	This contradicts finite-information dependence in \cref{def:gen-alg}.
	Hence \(\Gamma_n\) is not a general algorithm.  Thus the tower is admissible as a
standard deepest-level height-\(2\) tower, but not as an all-level finite-information
tower.
	
	For larger $k$, one simply keeps all additional lower indices dummy.
\end{proof}

The previous proposition shows that the all-level repair is too strong. We now show
the opposite problem: a represented deepest-only tower with individually computable
deepest estimators is still too weak to imply a Weihrauch normal form.  The missing
ingredient is uniformity of the whole indexed approximation table.

\begin{definition}[Represented weak Hansen package]\label{def:weak-Hansen-represented}
	Fix a countable-$\Lambda$ SCI computational problem
	\[
	\mathcal P= \bigl( \Xi,\Omega,(\mathcal M,d),\Lambda \bigr)
	\]
	with fixed represented information space $(I_\Lambda,\delta_{I_\Lambda})$ and a fixed
	representation $\delta_{\mathcal M}$ of $\mathcal M$, as in \cref{def:SCIAR-package}.
	Let $\mathcal R\subseteq \mathbb N^{\mathbb N}\to\mathbb N^{\mathbb N}$ be a class of realizers.
	A \textit{represented weak Hansen $\mathcal R$-tower of height $k\in\mathbb N$} means a family of
	maps
	\[
	\Gamma_{n_k},\ \Gamma_{n_k,n_{k-1}},\ \dots,\ \Gamma_{n_k,\dots,n_1}\colon \Omega\to \mathcal M
    \]
    for $n_1,\dots,n_k\in\mathbb N$	such that
	\begin{enumerate}
		\item the same pointwise iterated-limit equalities as in \cref{def:standard-tower} hold, i.e.
		\[
		\Xi(A)=\lim_{n_k\to\infty}\Gamma_{n_k}(A),\qquad
		\Gamma_{n_k}(A)=\lim_{n_{k-1}\to\infty}\Gamma_{n_k,n_{k-1}}(A),
		\]
		and so on down to depth $k$;
		\item only the deepest maps $\Gamma_{n_k,\dots,n_1}$ are required to be
		$\mathcal R$-realizable general algorithms (equivalently: after fixing names, each deepest map has an $\mathcal R$-realizer and satisfies \cref{def:gen-alg}); no condition is imposed on the intermediate maps
		\[
		\Gamma_{n_k},\ \Gamma_{n_k,n_{k-1}},\ \dots,\ \Gamma_{n_k,\dots,n_2}.
		\]
	\end{enumerate}
	This is the literal represented-space repair of \cite[Definition~3.1]{hansen2011solvability} obtained by keeping Hansen's deepest-estimator clause and fixing the extra represented-space data: representations, names, and a class \(\mathcal R\) of admissible realizers.
\end{definition}

\begin{proposition}[The deepest-only Hansen clause does not imply a pure $\mathcal R$-$\Lim$ witness]\label{prop:weak-Hansen-no-K}
	Fix $\mathcal R =\mathrm{Comp}$.  There exists a countable-$\Lambda$ SCI computational problem
	\[
	\mathcal P=\bigl( \Xi,\Omega,(\mathcal M,d),\Lambda \bigr)
	\]
	with fixed represented information space and fixed represented output space such that
	\begin{enumerate}
		\item $\mathcal P$ admits a represented weak Hansen $\mathrm{Comp}$-tower of height $2$ in the sense of \cref{def:weak-Hansen-represented};
		\item every deepest estimator $\Gamma_{n,m}$ is in fact a fixed-query computable general
		algorithm;
		\item every intermediate estimator $\Gamma_n$ fails to be a general algorithm in the
		sense of \cref{def:gen-alg};
		\item nevertheless there is \textit{no} computable function
		\[
		K\colon\subseteq \mathbb N^{\mathbb N}\to\mathbb N^{\mathbb N}
		\]
		with
		\[
		\Limi{2} \circ K \preceq \mathcal N_{\widehat{\Xi}}
		\]
		in the sense of \cref{def:SCIAR-package}.
	\end{enumerate}
	In plain words, even after fixing the represented-space data required in \cref{sec:uniformity-minimality},
	Hansen's deepest estimator clause is still too weak to yield Weihrauch comparability. It does \textit{not} imply the existence of the uniform preprocessor $K$ required in \cref{def:SCIAR-package} and \cref{thm:equivalence}.
\end{proposition}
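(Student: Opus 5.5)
The plan is to take a target $\Xi$ that is \emph{constant}, so that the consistency axiom and items (1)–(3) are cheap, while its value is a real too complicated to be produced by $\Limi{2}$ from a computable input name. Each deepest estimator hard-codes a single rational and is therefore individually computable. What fails is uniformity of the indexed table, since the table of rationals is not itself computable. The intermediate layer is forced out of the class of general algorithms by the same perturbation trick as in \cref{prop:all-level-strictly-stronger}.

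\textbf{Construction.} Let $\Omega:=\{0,1\}^{\mathbb N^2}$ and $\Lambda:=\{\pi_{n,m}\}$, the coordinate evaluations. Let $(\mathcal M,d)=(\R,|\cdot|)$ with its Cauchy representation, and let $\delta_{I_\Lambda}$ be the subspace representation from \cref{def:countable-Lambda}. Fix the real
\[
c:=\sum_{i\in\emptyset'''}2^{-i}.
\]
Since $\emptyset'''$ is infinite and coinfinite, $c$ is not dyadic, so $\emptyset'''$ is computable from any Cauchy name of $c$. Fix rationals $q_n$ with $|q_n-c|\le 2^{-n}$; this sequence is not computable, but each single $q_n$ is. Define
\[
S_{n,m}(x):=\sum_{j\le m}2^{-j}x_{n,j},\qquad
\Gamma_{n,m}(x):=q_n+2^{-n}S_{n,m}(x),\qquad
\Gamma_n(x):=q_n+2^{-n}\sum_{j}2^{-j}x_{n,j},
\]
and set $\Xi\equiv c$. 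Then $\Gamma_{n,m}\to\Gamma_n$ as $m\to\infty$, and $|\Gamma_n(x)-c|\le 2^{1-n}\to0$, so the iterated-limit equalities of \cref{def:weak-Hansen-represented} hold. Consistency is trivial because $\Xi$ is constant.

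\textbf{Items (1)–(3).} Each $\Gamma_{n,m}$ uses the fixed query set $\{\pi_{n,1},\dots,\pi_{n,m}\}$ and applies a computable post-processing with the single rational $q_n$ hard-coded. It is therefore a fixed-query general algorithm with a computable realizer, which gives (1) and (2). For (3), repeat the argument of \cref{prop:all-level-strictly-stronger} at $x\equiv0$. A finite query set misses some $\pi_{n,m_0}$. Flipping that coordinate yields $y$ that agrees with $x$ on all queries, yet $\Gamma_n(y)-\Gamma_n(x)=2^{-n-m_0}\neq0$. This violates finite-information dependence in \cref{def:gen-alg}.

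\textbf{Item (4), the main point.} Suppose a computable $K$ satisfies $\Limi{2}\circ K\preceq\mathcal N_{\widehat\Xi}$. Take the input $x\equiv0$; its evaluation table is the constant zero sequence in $\C^{\mathbb N}$, which has a computable name $p$. Then $K(p)$ is computable, and $\Limi{2}(K(p))$ is computable relative to $\emptyset''$, by applying Shoenfield's limit lemma twice (as in \cref{lem:FMC-adequate}). Being in $\mathcal N_{\widehat\Xi}(p)$, this sequence is a Cauchy name of $c$. Hence $c$, and with it $\emptyset'''$, would be $\emptyset''$-computable, which is a contradiction.

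The only delicate step is this degree computation: one must verify that $\Limi{2}$ maps computable points to $\emptyset''$-computable points, and that the chosen expansion of $c$ really recovers $\emptyset'''$. Both are standard, so I expect no real obstacle beyond stating them carefully.
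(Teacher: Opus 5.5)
Your proof is correct. Its skeleton is the paper's:
\begin{itemize}
\item each deepest estimator hard-codes an individually computable approximation of a fixed value;
\item the intermediate $\Gamma_n$ is broken by a single bit flip in row $n$ at the zero input;
\item item (4) is refuted by applying $K$ to a computable name of the zero input.
\end{itemize}

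The genuine difference is the key lemma in (4). The paper chooses its hard value by cardinality alone. The set $S$ of first components reachable as $\delta_{\mathcal M}(\Limi{2}(q))$ with $q$ computable is countable, so some $p\in\mathcal C\setminus S$ exists. This needs nothing beyond the countability of computable sequences, but the witness is non-explicit.

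You instead name the value, $c=\sum_{i\in\emptyset'''}2^{-i}$, and exclude it by degree bookkeeping. The relativized limit lemma applied twice gives $\Limi{2}(K(p))\le_T\emptyset''$. Non-dyadicity of $c$ then lets any Cauchy name of $c$ compute $\emptyset'''$. This costs those two verifications, but it makes the threshold explicit. For a constant target the criterion is even sharp. Every $\emptyset''$-computable element of Baire space is $\Limi{2}(q)$ for some computable $q$. Hence a computable (indeed constant) $K$ exists exactly when $c$ is $\emptyset''$-computable.

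A secondary difference is the shape of the target. The paper's $\Xi(x)=(p,L(x))$ is non-constant, which forces $\Omega$ to consist of the row-convergent matrices. Your constant target with the vanishing perturbation $2^{-n}S_{n,m}$ works on the full Cantor space and makes consistency trivial. The price is a degenerate problem whose only content is the hard-coded constant.
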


\begin{proof}
	We build and verify a counterexample explicitly in six steps.
	
	\smallskip
	\noindent
	\textbf{Step~1: the represented input space:}
	Let
	\[
	\Omega:=\Bigl\{x\in 2^{\mathbb N\times\mathbb N_0} : (\forall j\in\mathbb N_0)\ \lim_{n\to\infty}x(n,j)
	\text{ exists in }\{0,1\}\Bigr\}.
	\]
	Equip $\Omega$ with the subspace representation inherited from the Cantor space
	$2^{\mathbb N\times\mathbb N_0}$.
	Let
	\[
	\Lambda:=\{\pi_{n,j}:\Omega\to\{0,1\}\subseteq\mathbb C \mid n\in\mathbb{N},\ j\in\mathbb{N}_0 \},
	\qquad \pi_{n,j}(x):=x(n,j).
	\]
	Thus the induced represented information space is the standard name space for the
	countable table of bits of $x$. Via this coordinate interface, the factor space $I_\Lambda$ identifies canonically with $\Omega$ itself; in the sequel we silently use this identification when writing $\widehat\Xi$.
	
	\smallskip
	\noindent
	\textbf{Step~2: the represented output space:}
	Let $\mathcal C:=2^{\mathbb N_0}$ be the Cantor space with its standard representation and let
	\[
	\mathcal M:=\mathcal C\times \mathcal C
	\]
	with the usual product representation and any compatible product metric.
	Write $0^{\omega}\in\mathcal C$ for the all-zero sequence.
	
	Now consider the set
	\[
	S:=\Bigl\{u\in\mathcal C : \exists q\in\operatorname{dom}(\Limi{2})\cap\mathbb N^{\mathbb N}
	\text{ computable with }\delta_{\mathcal M}(\Limi{2}(q))=(u,v)
	\text{ for some }v\in\mathcal C\Bigr\}.
	\]
	There are only countably many computable $q$, hence $S$ is countable.
	Choose once and for all some
	\[
	p\in \mathcal C\setminus S.
	\]
	For each $n\in\mathbb N$ define the finite-prefix approximation
	\[
	p_n(j):=
	\begin{cases}
		p(j), & j<n,\\
		0, & j\ge n,
	\end{cases}
	\qquad (j\in\mathbb N_0),
	\]
	and for $n,m\in\mathbb N$ define
	\[
	p_{n,m}:=p_{\min\{n,m\}}.
	\]
	Then each $p_n$ and each $p_{n,m}$ is a computable point of Cantor space (its first
	finitely many bits are hard-coded into the program, and all later bits are $0$), and
	\[
	\lim_{m\to\infty}p_{n,m}=p_n,
	\qquad
	\lim_{n\to\infty}p_n=p
	\]
	in the Cantor space.
	
	\smallskip
	\noindent
	\textbf{Step~3: the target map and the weak Hansen family:}
	Define
	\[
	L\colon \Omega\to \mathcal C,
	\qquad
	L(x)(j):=\lim_{n\to\infty}x(n,j) \qquad (j\in\mathbb{N}_0),
	\]
	which is well-defined by the definition of $\Omega$. For every $n,m\in\mathbb N$ define
	\[
	R_n(x)(j):=x(n,j),
	\qquad
	R_{n,m}(x)(j):=
	\begin{cases}
		x(n,j), & j<m,\\
		0, & j\ge m.
	\end{cases}
	\]
	Now set
	\[
	\Gamma_n(x):=(p_n,R_n(x)),
	\qquad
	\Gamma_{n,m}(x):=(p_{n,m},R_{n,m}(x)),
	\qquad
	\Xi(x):=(p,L(x)).
	\]
	We claim that these maps form a represented weak Hansen $\mathrm{Comp}$-tower of
	height $2$.
	
	First, for each fixed $n$ and $x\in\Omega$ we have
	\[
	\lim_{m\to\infty}\Gamma_{n,m}(x)=\Gamma_n(x).
	\]
	Indeed, the first coordinate satisfies $p_{n,m}\to p_n$ and the second coordinate
	satisfies $R_{n,m}(x)\to R_n(x)$, because the truncations of a binary sequence converge to
	that sequence in the product metric on the Cantor space.
	Second, for each $x\in\Omega$ we have
	\[
	\lim_{n\to\infty}\Gamma_n(x)=\Xi(x).
	\]
	Again the first coordinate satisfies $p_n\to p$, while the second coordinate satisfies
	$R_n(x)\to L(x)$ since for every fixed coordinate $j$ the sequence $n\mapsto x(n,j)$ is
	eventually constant and therefore the convergence is coordinatewise on every finite
	prefix, hence in the Cantor metric.
	Thus the required nested limit equalities hold.
	
	\smallskip
	\noindent
	\textbf{Step~4: the deepest estimators are fixed-query computable general algorithms:}
	Fix $n,m$.  The map $\Gamma_{n,m}$ depends only on the finitely many coordinates
	\[
	\pi_{n,0},\pi_{n,1},\dots,\pi_{n,m-1}.
	\]
	Indeed, the first component $p_{n,m}$ is constant, and the second component
	$R_{n,m}(x)$ only uses the first $m$ entries of the $n$-th row of $x$.
	Hence $\Gamma_{n,m}$ is a fixed-query general algorithm in the sense of
	\cref{def:gen-alg}, witnessed by the constant query policy
	\[
	\Lambda_{\Gamma_{n,m}}(x):=\{\pi_{n,0},\dots,\pi_{n,m-1}\}.
	\]
	Moreover $\Gamma_{n,m}$ is computable: from a name of $x$ one can compute the queried
	bits $x(n,0),\dots,x(n,m-1)$, write down the computable constant $p_{n,m}$, and output a
	name of the pair $(p_{n,m},R_{n,m}(x))$.
	Therefore every deepest estimator is a fixed-query computable general algorithm.
	
	\smallskip
	\noindent
	\textbf{Step~5: the intermediate estimators are not general algorithms:}
	Fix $n\in\mathbb N$. We show that $\Gamma_n$ is not a general algorithm.
	Suppose, towards a contradiction, that some finite query policy
	$\Lambda_{\Gamma_n}(x)$ witnesses \cref{def:gen-alg} for $\Gamma_n$ on some input
	$x\in\Omega$.
	Because $\Lambda_{\Gamma_n}(x)$ is finite, there exists $j_0\in\mathbb N$ such that the
	coordinate query $\pi_{n,j_0}$ does not belong to $\Lambda_{\Gamma_n}(x)$.
	Define $y\in\Omega$ by flipping exactly that one bit in the $n$-th row, i.e.
	\[
	y(i,j):=
	\begin{cases}
		1-x(n,j_0), & (i,j)=(n,j_0),\\
		x(i,j), & \text{otherwise.}
	\end{cases}
	\]
	Then $x$ and $y$ agree on all queries in $\Lambda_{\Gamma_n}(x)$, but
	\[
	\Gamma_n(x)=(p_n,R_n(x))\neq (p_n,R_n(y))=\Gamma_n(y)
	\]
	because the second coordinate differs at position $j_0$.
	This contradicts the finite-information dependence clause in \cref{def:gen-alg}.
	Hence $\Gamma_n$ is not a general algorithm. This contradicts the finite-information dependence clause in
\cref{def:gen-alg}. Hence \(\Gamma_n\) is not a general algorithm.
Since \(n\) was arbitrary, the intermediate semantic layers of this tower are not general
algorithms. Thus the tower is admissible as a standard deepest-level tower, but it is not
admissible as an all-level finite-information tower.
	
	\smallskip
	\noindent
	\textbf{Step~6: no computable $K$ as in \cref{def:SCIAR-package} exists:}
Assume for contradiction that there exists a computable functional
\[
K\colon\subseteq \mathbb N^{\mathbb N}\to\mathbb N^{\mathbb N}
\]
such that
\[
\Limi{2}\circ K \preceq \mathcal N_{\widehat{\Xi}}.
\]
Let $x^{(0)}\in\Omega$ be the computable zero matrix, and set
\[
\widehat{x}^{(0)}:= \mathrm{Ev}_{\Lambda}(x^{(0)})\in I_{\Lambda}.
\]
Let
\[
r^{(0)}\in\dom(\delta_{I_{\Lambda}})
\]
be a computable name of $\widehat{x}^{(0)}$, i.e.
\[
\delta_{I_{\Lambda}}(r^{(0)})=\widehat{x}^{(0)}.
\]
Since $K$ is computable, the sequence
\[
q:=K(r^{(0)})
\]
is computable. By the equivalent formulation in \cref{def:SCIAR-package}, the assumption
\[
\Limi{2}\circ K \preceq \mathcal N_{\widehat{\Xi}}
\]
implies that $q\in\dom(\Limi{2})$ and
\[
\delta_{\mathcal M}(\Limi{2}(q))=\widehat{\Xi}(\widehat{x}^{(0)}).
\]
By the canonical identification $I_{\Lambda}\cong\Omega$ from Step~1,
\[
\widehat{\Xi}(\widehat{x}^{(0)})=\Xi(x^{(0)})=(p,0^\omega).
\]
Hence
\[
\delta_{\mathcal M}(\Limi{2}(q))=(p,0^\omega).
\]
Therefore, by the definition of $S$ in Step~2, we have $p\in S$, contradicting
\[
p\in\mathcal C\setminus S.
\]
This contradiction shows that no such computable $K$ exists.
\end{proof}

\subsection{Failure Of The Three Naive Effective Readings}\label{sec:three-naive-readings}

\begin{theorem}[Failure of the three naive effective SCI readings]\label{thm:logical-necessity-uniform-implementation}
Suppose an SCI formalism is intended to recover Weihrauch/Type-2 computation as a special case in the following minimal sense:
\begin{enumerate}
\item countable-evaluation SCI computational problems can be converted into represented information problems
\[
        \widehat{\Xi}: I_\Lambda\to \mathcal M;
\]
\item finite effective SCI height implies finite rank along the Weihrauch iterated-limit chain;
\item the formalism still allows nontrivial exact finite heights \(2,3,\ldots\).
\end{enumerate}
Then none of the following three structures is sufficient as the main effective
definition:
\begin{enumerate}
\item unrestricted raw type-G SCI;
\item represented deepest-only towers with merely individually computable deepest estimators;
\item all-level finite-query exact-output towers.
\end{enumerate}
\end{theorem}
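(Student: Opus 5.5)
The proof handles each of the three candidate structures separately. For each one, I would exhibit a concrete problem that makes it violate one of the requirements (1)--(3). The previous sections already provide all the counterexamples, so the work is mainly assembling them.

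\textbf{Case (1): unrestricted raw type-G SCI.} This reading violates requirement (2). Take the problem of \cref{thm:scig0-but-tteinf}: $\Omega=\C$, $\Lambda=\{\mathrm{id}_\C\}$, and $\Xi=\mathbf 1_A$ with $A$ analytic-complete. The evaluation set $\Lambda$ is countable, and $I_\Lambda$ identifies with $\C$ under its Cauchy representation, so requirement (1) holds. By \cref{thm:scig0-but-tteinf} we have $\SCIG(\mathcal P)=0$ but $\mathrm{rank}_{\mathrm W}(\widehat\Xi)=\infty$. Hence finite raw height does not imply finite Weihrauch rank, which is exactly \cref{cor:no-bound-G-to-W}. \Cref{thm:unrestricted-collapses} additionally shows that the raw reading places arbitrary targets at height $0$.

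\textbf{Case (2): represented deepest-only towers with individually computable deepest estimators.} This reading also violates requirement (2), now at height $2$. Use \cref{prop:weak-Hansen-no-K}: it gives a represented weak Hansen $\Comp$-tower of height $2$, yet no computable $K$ satisfies $\Limi{2}\circ K\preceq\mathcal N_{\widehat\Xi}$. By \cref{thm:pure-tower-normal-form}, this means $\widehat\Xi\not\le_{\mathrm W}\Limi{2}$. Requirement (2) demands more, namely that $\widehat\Xi\le_{\mathrm W}\Limi{k}$ for some finite $k$, so I have to show $\mathrm{rank}_{\mathrm W}(\widehat\Xi)=\infty$. Here I would rerun Step~6 of \cref{prop:weak-Hansen-no-K} with $\Limi{k}$ in place of $\Limi{2}$. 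Redefine $S$ as the set of first components of $\delta_{\mathcal M}(\Limi{k}(q))$, ranging over all $k\in\NN_0$ and all computable $q$. This set is still countable, and we choose $p\notin S$. A computable $K$ with $\Limi{k}\circ K\preceq\mathcal N_{\widehat\Xi}$ would send the computable name of the zero matrix to a computable $q$ whose $\Limi{k}$-image names $(p,0^\omega)$. That would put $p$ in $S$, a contradiction. So this reading produces finite effective height $2$ but infinite Weihrauch rank.

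\textbf{Case (3): all-level finite-query exact-output towers.} This reading violates requirement (3). By \cref{prop:all-level-collapse}, every all-level tower of finite positive height yields a height-$1$ tower, so the only finite values are $0$ and $1$. No problem can then have exact finite height $2$, $3$, and so on. The paper does show these heights are realized by the raw extensional SCI (\cref{thm:Xi-m-exact-height}), but that does not help the all-level reading.

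\textbf{Main obstacle.} The delicate step is case (2). \Cref{prop:weak-Hansen-no-K} is stated only for $\Limi{2}$, so I would either strengthen the choice of $p$ to avoid the union over all $k$ as described above, or reformulate requirement (2) as the statement ``height $k$ implies rank $\le k$''. I also need the contradiction to rest on definite reduction data. For this I would rely on \cref{thm:pure-tower-normal-form} to convert a Weihrauch reduction into the preprocessor-only normal form before applying the countability argument.
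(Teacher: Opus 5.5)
Your proposal is correct and follows the paper's proof case by case, with the same witnesses: \cref{thm:finite-factor-scig0} and \cref{thm:scig0-but-tteinf} for (1), \cref{prop:weak-Hansen-no-K} for (2), and \cref{prop:all-level-collapse} for (3).

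The one real difference is in case (2), and there your version is more careful than the paper's. The paper cites \cref{prop:weak-Hansen-no-K} and concludes directly that finite deepest-only computable height does not imply finite Weihrauch rank. That proposition, however, only excludes a $\Limi{2}$ normal form, i.e.\ it gives $\mathrm{rank}_{\mathrm W}(\widehat\Xi)>2$. With $p$ chosen merely outside the set $S$ of Step~2, nothing prevents $p$ from being computable from the third Turing jump. In that case $\widehat\Xi\le_{\mathrm W}\Limi{3}$ and the rank is finite.

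Your choice of $p$ outside the countable union over all $k$ (for instance any non-arithmetical $p$) leaves Steps~1--5 untouched and makes Step~6 work for every $k$. So you genuinely obtain $\mathrm{rank}_{\mathrm W}(\widehat\Xi)=\infty$, which is what requirement (2) needs when read literally. The paper's citation suffices only under your alternative reading ``height $k$ implies rank $\le k$''.
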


\begin{proof}
Unrestricted raw type-G is too weak: by \cref{thm:finite-factor-scig0}, any target map factoring through a finite transcript is computable at raw type-G height \(0\), regardless of the descriptive or Type-2 complexity of the finite-transcript post-processor. \Cref{thm:scig0-but-tteinf} gives an explicit analytic non-Borel decision example with raw \(\SCIG=0\) but infinite Weihrauch-SCI rank.

Deepest-only represented towers with merely individually computable deepest estimators are still too weak: \Cref{prop:weak-Hansen-no-K} constructs a represented weak Hansen computable tower of height \(2\) whose deepest estimators are computable, but for which no computable
\[
        K:\subseteq\mathbb N^{\mathbb N}\to\mathbb N^{\mathbb N}
\]
satisfies
\[
        \Limi{2} \circ K\preceq \mathcal N_{\widehat{\Xi}}.
\]
Thus finite deepest-only computable height does not imply finite Weihrauch rank.

All-level finite-query exact-output towers are too strong: by \cref{prop:all-level-collapse}, every finite positive all-level height collapses to height \(1\). Hence this repair cannot preserve exact finite heights \(2,3,\ldots\).

Thus the first reading is too weak, the second reading is still non-uniform, and the third reading is too strong. The remaining repair used in this paper is to require one admissible procedure producing the whole indexed approximation table. In the Weihrauch case this is expressed by the existence of a single admissible preprocessor \(K\) with
\[
        \Limi{k} \circ K\preceq \mathcal N_{\widehat{\Xi}}.
\]
\end{proof}

The conclusion is not that these notions are useless. Raw type-\(G\) is useful as an
information-theoretic skeleton, and all-level towers are useful as a diagnostic
strengthening. The point is that neither gives the right effective Weihrauch-comparable
semantics by itself.

\subsection{Pure \texorpdfstring{\(\mathcal R\)-\(\Lim\)}{R-Lim} Implemented SCI And The Bridge Theorem}\label{sec:pure-R-lim}
The next definition isolates the name-level tower notion used for Weihrauch comparability.
Unlike a raw SCI tower, it requires one admissible preprocessor producing the whole input
to the iterated limit operator. Thus the bridge theorem below is not merely a rephrasing of raw \(\SCIA\); it identifies the additional uniformity needed for a represented-space computation model.

\begin{definition}[Pure $\mathcal R-\Lim$ implemented SCI package]\label{def:SCIAR-package}
Let $\mathcal{P}=\bigl( \Xi,\Omega,(\mathcal{M},d),\Lambda \bigr)$ be an SCI computational problem with countable
$\Lambda=\{f_n\}_{n\in\NN}$. Let $I_\Lambda \subseteq \mathbb C^{\NN}$ be the
represented information space with the subspace product representation, and let
$\widehat{\Xi}:I_\Lambda \to \mathcal{M}$ be the factor map.

Fix also a representation $\delta_{\mathcal{M}}:\subseteq \mathbb N^{\mathbb N}\to \mathcal{M}$ and regard
$\widehat{\Xi}$ as a single-valued problem between represented spaces
\[
\widehat{\Xi}:\subseteq (I_\Lambda,\delta_{I_\Lambda}) \to (\mathcal{M},\delta_{\mathcal{M}}).
\]

Let $\mathcal R$ be a class of partial functionals
$\subseteq \mathbb N^{\mathbb N}\to \mathbb N^{\mathbb N}$.
For $k\in\mathbb N_0$ we say that $\mathcal{P}$ has a \textit{pure $\mathcal R$-$\Lim$-tower of
height $k$} if there exists $K\in\mathcal R$ such that
\[
\Limi{k} \circ K \preceq \mathcal N_{\widehat{\Xi}}.
\]
Equivalently: for every input name $p\in\dom(\delta_{I_\Lambda})$ naming some
$x\in I_\Lambda$, the name $K(p)$ lies in $\dom(\Limi{k})$ and
\[
\delta_{\mathcal{M}}\bigl( \Limi{k}(K(p)) \bigr)=\widehat{\Xi}(x).
\]

Define
\[
\mathrm{SCI}^{\Lim}_\mathcal R(\mathcal P) :=
\min\{k\in\mathbb N_0:
\mathcal P\text{ has a pure $\mathcal R-\Lim$ tower of height }k\},
\]
with value \(\infty\) if the set is empty.
\end{definition}

\begin{theorem}[Bridge theorem: pure $\mathcal R$-towers coincide with $\mathcal R$-Weihrauch rank]\label{thm:equivalence}
Assume the setting of \cref{def:SCIAR-package}. Assume moreover that the chosen
class $\mathcal R$ has the following \textit{$\Lim$-normal-form property}:
for every single-valued problem $f$ between represented spaces and every
$k\in\mathbb N_0$,
\[
f \le_{\mathrm W}^{\mathcal R} \Limi{k}
\quad\Longleftrightarrow\quad
\exists K\in\mathcal R \text{ such that } \Limi{k} \circ K \preceq \mathcal N_f.
\tag{$\ast_{\mathcal R,k}$}
\]
Then
\[
\mathrm{SCI}^{\Lim}_\mathcal R (\mathcal P) = \mathrm{rank}_{\mathcal R}(\widehat{\Xi}).
\]

In particular, for every class $\mathcal R$ for which $(\ast_{\mathcal R,k})$ holds for
all $k$, the pure $\mathcal R$-$\Lim$-tower height is exactly the $\mathcal R$-Weihrauch rank along the
iterated-limit chain.
\end{theorem}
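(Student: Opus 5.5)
The plan is to show that the two index sets defining the two quantities coincide. Set
\[
S^{\Lim}:=\{k\in\NN_0:\exists K\in\mathcal R\ \text{with}\ \Limi{k}\circ K\preceq\mathcal N_{\widehat\Xi}\},
\qquad
S^{\mathcal R}:=\{k\in\NN_0:\widehat\Xi\le_{\mathrm W}^{\mathcal R}\Limi{k}\}.
\]
By \cref{def:SCIAR-package}, $\mathrm{SCI}^{\Lim}_{\mathcal R}(\mathcal P)=\min S^{\Lim}$, with value $\infty$ if $S^{\Lim}=\emptyset$. By \cref{def:R-lim-rank}, $\mathrm{rank}_{\mathcal R}(\widehat\Xi)=\min S^{\mathcal R}$, again with value $\infty$ on the empty set. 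If $S^{\Lim}=S^{\mathcal R}$, then both minima coincide or both sets are empty, and the equality follows, in the same way as in the proof of \cref{thm:rank-wellposed}.

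To prove $S^{\Lim}=S^{\mathcal R}$, I would fix $k\in\NN_0$ and apply the hypothesis $(\ast_{\mathcal R,k})$ to $f:=\widehat\Xi$. For this, $\widehat\Xi$ must be a single-valued problem between represented spaces. This holds by \cref{lem:factor}: consistency makes $\widehat\Xi:I_\Lambda\to\mathcal M$ well defined and single-valued. It is viewed between $(I_\Lambda,\delta_{I_\Lambda})$ and $(\mathcal M,\delta_{\mathcal M})$ exactly as fixed in \cref{def:SCIAR-package}. Then $(\ast_{\mathcal R,k})$ states literally that $k\in S^{\mathcal R}$ if and only if $k\in S^{\Lim}$. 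Since $k$ was arbitrary, the two sets are equal.

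The only step that needs care is making sure the two sides of $(\ast_{\mathcal R,k})$ are read with the same data as the definitions. On the left, the reducibility $\le^{\mathcal R}_{\mathrm W}$ is that of \cref{def:R-weihrauch}, taken with the representations fixed in the package. On the right, the formula $\Limi{k}\circ K\preceq\mathcal N_f$ is meant in the name-level sense of \cref{def:name-level-problem}. This is exactly the formula appearing in \cref{def:SCIAR-package}, including its unpacked equivalent form: $K(p)\in\dom(\Limi{k})$ and $\delta_{\mathcal M}(\Limi{k}(K(p)))=\widehat\Xi(\delta_{I_\Lambda}(p))$. I would state this explicitly. I would also note that no minimality argument is lost, because $(\ast_{\mathcal R,k})$ is assumed for every $k$, including $k=0$. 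So the minima range over the same sets, including the $\infty$ case.

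Finally, as a remark rather than as part of the proof, I would record that for $\mathcal R=\Comp$ the hypothesis $(\ast_{\mathcal R,k})$ is provided by \cref{thm:pure-tower-normal-form}. In that case $\mathrm{SCI}^{\Lim}_{\Comp}(\mathcal P)=\mathrm{rank}_{\mathrm W}(\mathcal P)$, consistent with \cref{def:rank-of-SCIproblem}. I expect no genuine obstacle, since the theorem is a transfer of $(\ast)$ through the definitions.
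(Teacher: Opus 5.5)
Your proposal is correct and matches the paper's proof. For each $k$ you apply $(\ast_{\mathcal R,k})$ to $f=\widehat{\Xi}$, which shows that the admissible-height sets in \cref{def:SCIAR-package} and \cref{def:R-lim-rank} coincide, so the two minima agree or both are $\infty$. Your side remarks are accurate but not needed for the argument: single-valuedness via \cref{lem:factor}, and the case $\mathcal R=\Comp$ via \cref{thm:pure-tower-normal-form}.
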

\begin{proof}
By \cref{def:R-lim-rank},
\[
\mathrm{rank}_{\mathcal R}(\widehat{\Xi})
=
\min\Bigl\{ k\in\mathbb N_0 :
\widehat{\Xi} \le_{\mathrm W}^{\mathcal R} \Limi{k} \Bigr\},
\]
with value $\infty$ if the set is empty.

By the assumed $\Lim$-normal-form property $(\ast_{\mathcal R,k})$ (applied to $f=\widehat{\Xi}$), for each
$k\in\mathbb N_0$ we have
\[
\widehat{\Xi} \le_{\mathrm W}^{\mathcal R} \Limi{k}
\quad\Longleftrightarrow\quad
\exists K\in\mathcal R\text{ such that } \Limi{k} \circ K \preceq \mathcal N_{\widehat{\Xi}}.
\]
By \cref{def:SCIAR-package}, the right-hand side is exactly the statement that
$\mathcal{P}$ has a pure $\mathcal R$-$\Lim$-tower of height $k$.
Hence the set of admissible heights in the definition of
$\mathrm{SCI}^{\Lim}_\mathcal R(\mathcal{P})$ coincides with the set of admissible heights in the
definition of $\mathrm{rank}_{\mathcal R}(\widehat{\Xi})$.
Therefore the two minima coincide (or are both $\infty$).
\end{proof}

\subsection{Minimality Calculus}\label{sec:minimality-calculus}
\begin{theorem}[A $\mathsf{ZFC}$-minimality calculus]\label{thm:minimality}
Fix a universe of SCI problems $\mathcal P=\bigl( \Xi,\Omega,(\mathcal M,d),\Lambda \bigr)$ with countable
evaluation sets $\Lambda$.

Assume one aims for an \textit{effective} reading of the $\mathrm{SCI}_A$ interface satisfying the following
desiderata
\begin{enumerate}
\item[(D1)] \textbf{Computability-model stability:}
the admissible class of base post-processing operations forms a computability model
in the sense of \cref{def:compmodel} (well-defined morphism sets, identities, closure under
composition, and basic data operations).

\item[(D2)] \textbf{Weihrauch comparability via a pure $\mathcal R$-tower semantics:}
there exists a class $\mathcal R$ and an associated reducibility
$\le_{\mathrm W}^{\mathcal R}$ such that for every countable-$\Lambda$
SCI computational problem $\mathcal{P}$ with induced represented-space factor map
$\widehat{\Xi}:I_\Lambda \to \mathcal{M}$ one has
\[
\mathrm{SCI}^{\Lim}_\mathcal R(\mathcal{P})=\mathrm{rank}_{\mathcal R}(\widehat{\Xi}),
\]
where $\mathrm{SCI}^{\Lim}_\mathcal R(\mathcal{P})$ is the pure tower height from
\cref{def:SCIAR-package} and $\mathrm{rank}_{\mathcal R}$ is the
$\mathcal R$-Weihrauch rank along the iterated limit from \cref{def:R-lim-rank}.

\item[(D3)] \textbf{Nontriviality:}
there exists at least one problem $\mathcal P$ in the universe such that
$\mathrm{SCI}^{\Lim}_\mathcal R(\mathcal P)>0$.
\end{enumerate}

Consider then the following premises
\begin{enumerate}
\item[(P1)] \textbf{Fixed representations (up to a fixed equivalence class):}
the relevant input/output types are treated as represented spaces, i.e.\ the framework fixes
representations (or at least fixes admissible equivalence classes of representations) for the
data types involved in the computation model.

\item[(P2)] \textbf{Names, not atoms (when $\mathcal R$ is computability-based):}
if $\mathcal R$ is intended to capture computability on $\NN^\NN$ (e.g.\ $\mathsf{Comp}$ or $\mathsf{FMC}$),
then real/complex evaluation values must be supplied to post-processing by \textit{names}
(e.g.\ standard Cauchy names), not as atomic reals/complex numbers equipped with exact comparison
primitives.

\item[(P3)] \textbf{$\mathcal R$-restricted base post-processing:}
admissible base post-processing must be restricted to a distinguished class $\mathcal R$
(e.g.\ by requiring base maps to be $\mathcal R$-realizable on names).

\item[(P4)] \textbf{Adequacy of $\mathcal R$:}
the class $\mathcal R$ must be adequate (contain pairing/projections and constants and be closed
under composition) so that the resulting $\mathcal R$-reducibilities are at least preorders.

\item[(P5)] \textbf{$\Lim$-normal-form premise:}
for the chosen class $\mathcal R$, every reduction to $\Limi{k}$
can be put into pure pre-processing normal form, i.e.\
for every single-valued represented-space problem $f$ and every
$k\in\mathbb N_0$,
\[
f \le_{\mathrm W}^{\mathcal R} \Limi{k}
\quad\Longleftrightarrow\quad
\exists K\in\mathcal R \text{ such that } \Limi{k} \circ K \preceq \mathcal N_f.
\]
\end{enumerate}
Here, (P1)-(P4) are logically necessary for (D1)-(D3) and (P1)-(P5) are sufficient for (D1)-(D2).
\end{theorem}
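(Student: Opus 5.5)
The proof splits into a sufficiency part and a necessity part. Each necessity claim is proved by contraposition: if the premise is dropped, one of (D1)--(D3) fails or becomes ill-posed.

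\textbf{Sufficiency of (P1)--(P5) for (D1)--(D2).} By (P1) the information space $I_\Lambda$ and $\mathcal M$ carry fixed representations. Hence $\widehat{\Xi}$ from \cref{lem:factor} is a well-typed single-valued represented-space problem, and $\mathcal N_{\widehat\Xi}$ is a definite Baire-space problem. By (P4) the class $\mathcal R$ contains identities (through projections and pairing) and is closed under composition. Together with the pairing and projection maps this gives (D1) in the sense of \cref{def:compmodel}; \cref{prop:R-weihrauch-preorder} additionally makes $\le_{\mathrm W}^{\mathcal R}$ a preorder, so that $\mathrm{rank}_{\mathcal R}$ is well defined as in \cref{thm:rank-wellposed}. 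Premise (P5) is exactly the hypothesis $(\ast_{\mathcal R,k})$ for all $k$. Therefore \cref{thm:equivalence} applied to $f=\widehat\Xi$ yields $\mathrm{SCI}^{\Lim}_{\mathcal R}(\mathcal P)=\mathrm{rank}_{\mathcal R}(\widehat\Xi)$ for every $\mathcal P$ in the universe, which is (D2). Premises (P2) and (P3) enter only to make sure that the objects manipulated by $K$ are names, so that the realizer-level statement in \cref{def:SCIAR-package} is the one actually being computed.

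\textbf{Necessity of (P1)--(P4) for (D1)--(D3).} I would treat each premise separately.

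For (P1): without fixed representations, \cref{thm:any-f-computable-by-rep} and \cref{cor:need-representations} show that every $\widehat\Xi$ becomes $\mathcal R$-realizable with $K$ a projection (for adequate $\mathcal R$) after a change of representation. Then either the quantities in (D2) are not well defined, or every problem gets height $0$, contradicting (D3).

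For (P2): if $\mathcal R\supseteq\Comp$ is computability-based but values are given as atoms with exact comparisons, \cref{prop:atomic-breaks} and \cref{cor:names-not-atoms} produce $\mathrm{sgn}$. This map is computed at height $0$ by the atomic interface, yet it is not $\mathcal R$-realizable on Cauchy names at height $0$. So the interface height and $\mathrm{rank}_{\mathcal R}$ disagree, and (D2) fails.

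For (P3): if base post-processing is unrestricted, \cref{thm:unrestricted-collapses} gives height $0$ for every problem in the $\Lambda=\{\id\}$ family. This violates (D3), and by \cref{thm:scig0-but-tteinf} it also violates (D2), since $\mathrm{rank}$ can be $\infty$ there.

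For (P4): \cref{lem:R-needed} shows that $\le_{\mathrm W}^{\mathcal R}$ may fail to be reflexive or transitive, so (D1) fails and $\mathrm{rank}_{\mathcal R}$ need not be order-invariant.

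\textbf{Main obstacle.} The delicate step is the necessity of (P1) and (P2). To get a genuine contradiction with (D2)/(D3), rather than mere ill-posedness, I have to pick witness problems in the given universe. My plan is to assume that the universe contains the simple problems $\Omega=\C$ with $\Lambda=\{\id\}$, or sign-type problems; if it does not, necessity can only be read as holding "for universes rich enough to contain these witnesses". I would state this caveat explicitly and verify the counterexamples relative to that assumption.
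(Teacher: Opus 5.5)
Your proposal follows the paper's proof essentially step for step: (P1)+(P4) give (D1), (P5) is exactly the hypothesis of \cref{thm:equivalence} and gives (D2), and necessity of (P1)--(P4) rests on \cref{cor:need-representations}, \cref{cor:names-not-atoms}, \cref{thm:unrestricted-collapses} and \cref{lem:R-needed}; your caveat that the necessity claims need witness problems in the universe is a sharpening the paper leaves implicit. Two of your expansions, which the paper's one-line citations also leave unaddressed, need tightening: $\mathrm{sgn}$ is limit computable, hence $\mathsf{FMC}$-realizable at height $0$, so your (P2) witness only covers $\mathcal R=\Comp$; and for (P3) the collapse on the $\Lambda=\{\id\}$ family refutes neither the existential (D3) nor (D2) (with $\mathcal R$ unrestricted one also has $\mathrm{rank}_{\mathcal R}=0$), whereas picking for each $\mathcal P$ an arbitrary realizer $K$ of $\widehat\Xi$ gives $\mathrm{SCI}^{\Lim}_{\mathcal R}(\mathcal P)=0$ for every $\mathcal P$ when $\mathcal R$ is all partial maps, which does refute (D3).
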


\begin{proof}
(P1) is necessary by \cref{cor:need-representations}.
(P2) is necessary by \cref{cor:names-not-atoms}.
(P3) is necessary by \cref{thm:unrestricted-collapses}.
(P4) is necessary by \cref{lem:R-needed}.
(P1)-(P4) give a well-defined represented-space computability setting with a
$\mathcal R$-reduction preorder; then (D2) is exactly \cref{thm:equivalence}. 
More explicitly, under $(P1)$ the relevant data types are fixed represented spaces, so the
class of admissible $\mathcal R$-realizable maps is well-defined. By $(P4)$ this class contains
identities, pairing and projection maps, and constants, and it is closed under composition.
Hence the admissible base post-processing maps form a computability model in the sense
of \cref{def:compmodel}, i.e.\ $(D1)$ holds. Under $(P5)$, \cref{thm:equivalence} yields $(D2)$.
\end{proof}

\begin{remark}[Meta-level meaning of \enquote{logically necessary} and background logic]\label{rem:logical-necessity}
Throughout this work we reason in ordinary classical mathematics (e.g.\ a standard set-theoretic
metatheory such as $\mathsf{ZFC}$, with classical first-order logic in the background).
In particular, when we say that a premise $(P)$ is \textit{logically necessary} for a list of
desiderata $(D)$, we mean the following semantic implication:

\smallskip
\centerline{$(D)\ \Longrightarrow\ (P)$}

\smallskip
i.e.\ in the ambient metatheory there is \textit{no} interpretation of the $\mathrm{SCI}_A$ interface
satisfying $(D)$ while failing $(P)$ (equivalently: dropping $(P)$ admits a counterexample in which
at least one item of $(D)$ fails).
This is a \textit{semantic minimality} statement (in the sense of consequence/entailment),
not a reverse-mathematical claim about the minimal \textit{axiom system} in which the theorem can be proved.
If one wishes, all statements in \cref{sec:uniformity-minimality} can in principle be formalized in subsystems of analysis
(e.g.\ second-order arithmetic, see e.g., \cite{simpson09secAr}) after fixing codings of represented spaces, realizers, and
function classes; we do not pursue proof-theoretic strength here.
\end{remark}

\begin{remark}[High-level explanation tied to SCI literature]
\Cref{thm:minimality} formalizes:
type-$G$ (unrestricted) is not captured by Weihrauch degrees, hence one should restrict base post-processing
(continuous/Borel/Baire) and then compare to corresponding Weihrauch reducibilities.
For type-A, once one fixes representations/names, restricts base post-processing to
a class $\mathcal R$, and has the corresponding $\Lim$-normal-form property, the
\textit{pure} tower height agrees with the iterated-$\Lim$ rank.
\end{remark}

\section{Outlook}{\label{sec:outlook}}
Throughout this work we reason in ordinary classical mathematics, for instance in a standard set-theoretic metatheory such as $\mathsf{ZFC}$. The minimality statements in \cref{sec:uniformity-minimality} should be read as semantic minimality statements relative to that ambient metatheory: dropping one of the listed premises permits counterexamples to the desired computability-model or Weihrauch-comparability conclusions.

A reverse-mathematical analysis makes this dependence explicit by fixing a base theory
$T$ (typically $\mathsf{RCA}_0$) and internalizing the discussion as a \textit{scheme over codes}
for the relevant objects; see \cite{simpson09secAr} for a possible general framework.
Concretely, one restricts attention to instances of the SCI where the input/output types
are presented as represented spaces whose representations are coded by sets of naturals, and
where realizers/reductions are coded by indices or by accepted codes
for continuous/Borel functionals.  Under such a translation:
\begin{itemize}
\item (P1) becomes the requirement that representations (or admissible equivalence classes of
representations) are part of the data, so that \enquote{$\mathcal R$-computable/realizable} is a
\textit{definite} predicate on codes (cf.\ \cref{def:compmodel}(\textit{Well-definedness})).
\item (P2) becomes the requirement that the real/complex interface is name-based (e.g.\ Cauchy
names), not atomic exact comparison; this is precisely what makes computability imply
topological continuity on the induced representation-topologies (cf.\ the discontinuity obstruction in \cref{cor:names-not-atoms}).
\item (P3)-(P4) become closure axioms on the code-class for allowable pre-/post-processing,
ensuring that the induced reducibilities and height notions are stable under identity,
composition, and basic data operations (pairing/projections), i.e.\ that they define
preorders/hierarchies rather than ad hoc relations.
\end{itemize}
For type-A, after fixing representations/names and restricting admissible base
post-processing to a class $\mathcal R$, one obtains equality between pure tower
height and iterated-$\Lim$ rank precisely when the chosen $\mathcal R$ satisfies the
corresponding $\Lim$-normal-form property.

With this perspective, \cref{thm:minimality} suggests the following reverse-mathematics
questions, which are \textit{not} settled in the present work:
\begin{enumerate}
\item For computability-based choices of $\mathcal R$ (e.g.\ $\mathcal R=\Comp$ or
$\mathcal R=\mathsf{FMC}$), determine the weakest subsystem $T$ in which the full package
\[
(D1\wedge D2\wedge D3)\ \rightarrow\ (P1\wedge P2\wedge P3\wedge P4)
\]
can be formalized and proved.
\item For higher regularity choices of $\mathcal R$ (e.g.\ $\mathcal R =\mathsf{Cont}$ or $\mathcal R=\mathsf{Bor}$),
pin down how the answer depends on the chosen \textit{coding} of continuous/Borel functionals and on
the background axioms needed to develop the corresponding closure/measurability theory inside $T$
(e.g.\ potential needs for $\mathsf{WKL}_0$, $\mathsf{ACA}_0$, or $\mathsf{ATR}_0$-type principles).
\end{enumerate}
In short: \cref{thm:minimality} is a $\mathsf{ZFC}$-level minimality result, but it naturally opens the door to a
fine-grained calibration of its \textit{foundational strength} (and of the SCI-Weihrauch comparison theorems)
within the reverse-mathematics program.

\textbf{Acknowledgments } The author thanks Vasco Brattka, Matthias Schröder and Patrick Uftring for inspiring discussions around the different facets of this work.

\textbf{Statement } During the preparation of this work the author used UniBwM-ChatGPT5.2 in order to improve the language in the abstract and introduction. After using this tool, the author reviewed and edited the content as needed and takes full responsibility for the content of the published article.

\textbf{Conflict of interest } The author declares no conflict of interest.

    \bibliographystyle{alpha}  
    \bibliography{bib.bib}

\end{document}